\documentclass{amsart}

\usepackage[english]{babel}
\usepackage[utf8]{inputenc}
\usepackage{amsmath}
\usepackage{amssymb}
\usepackage{amsthm}
\usepackage{amssymb}
\usepackage{graphicx}
\usepackage[colorinlistoftodos]{todonotes}
\usepackage{hyperref}
\usepackage{comment}
\usepackage{fancyhdr}

\usepackage{thmtools}
\usepackage{thm-restate}

\newtheorem{theorem}{Theorem}[section]
\newtheorem{remark}[theorem]{Remark}

\newtheorem{proposition}[theorem]{Proposition}
\newtheorem{definition}{Definition}[section]

\DeclareMathOperator{\Aut}{Aut}

\newcommand{\Q}{\mathbb{Q}}
\newcommand{\Z}{\mathbb{Z}}

\newcommand{\id}{{\mathtt {id}}}
\newcommand{\x}{\underline{x}}
\newcommand{\y}{\underline{y}}
\newcommand{\z}{\underline{z}}
\newcommand{\uu}{\underline{u}}
\newcommand{\vv}{\underline{v}}
\newcommand{\ww}{\underline{w}}
\newcommand{\aaa}{\underline{a}}
\newcommand{\bb}{\underline{b}}
\newcommand{\cc}{\underline{c}}
\newcommand{\Y}{\underline{Y}}
\newcommand{\ZZ}{\underline{Z}}
\newcommand{\xo}{\underline{x_1}}
\newcommand{\xt}{\underline{x_2}}
\newcommand{\xth}{\underline{x_3}}
\newcommand{\yo}{\underline{y_1}}
\newcommand{\yt}{\underline{y_2}}
\newcommand{\yth}{\underline{y_3}}
\newcommand{\zo}{\underline{z_1}}
\newcommand{\zt}{\underline{z_2}}
\newcommand{\zth}{\underline{z_3}}
\newcommand{\uo}{\underline{u_1}}
\newcommand{\ut}{\underline{u_2}}
\newcommand{\uth}{\underline{u_3}}
\newcommand{\vo}{\underline{v_1}}
\newcommand{\vt}{\underline{v_2}}
\newcommand{\vth}{\underline{v_3}}
\newcommand{\wo}{\underline{w_1}}
\newcommand{\wt}{\underline{w_2}}
\newcommand{\wth}{\underline{w_3}}

\newcommand{\nn}{\nonumber}

\newcommand{\cube}[9]{ 
\begin{tikzpicture}[scale=0.9, baseline=(current bounding box.center)]

\node (a) at (0,2) {$#1$};
\node (b) at (2,2) {$#2$};
\node (c) at (0,0) {$#3$};
\node (d) at (2,0) {$#4$};
\node (e) at (0.8,2.8) {$#5$};
\node (f) at (2.8,2.8) {$#6$};
\node (g) at (0.8,0.8) {$#7$};
\node (h) at (2.8,0.8) {$#8$};

\draw (d) -- (c);
\draw (c) -- (a);
\draw (a) -- (e);
\draw (b) -- (f);
\draw (c) -- (g);
\draw (d) -- (h);
\draw (e) -- (f);
\draw (f) -- (h);
\draw (h) -- (g);
\draw (g) -- (e);
\draw[line width=10pt,white] (a) -- (b);
\draw (a) -- (b);
\draw[line width=10pt, white] (b) -- (d);
\draw (b) -- (d);

\node (p) at (3.2, 0.7) {#9}; 
\end{tikzpicture}
}
\makeatletter
\newcommand*\rel@kern[1]{\kern#1\dimexpr\macc@kerna}
\newcommand*\widebar[1]{%
  \begingroup
  \def\mathaccent##1##2{%
    \rel@kern{0.8}%
    \overline{\rel@kern{-0.8}\macc@nucleus\rel@kern{0.2}}%
    \rel@kern{-0.2}%
  }%
  \macc@depth\@ne
  \let\math@bgroup\@empty \let\math@egroup\macc@set@skewchar
  \mathsurround\z@ \frozen@everymath{\mathgroup\macc@group\relax}%
  \macc@set@skewchar\relax
  \let\mathaccentV\macc@nested@a
  \macc@nested@a\relax111{#1}%
  \endgroup
}
\makeatother

\title[Explicit composition identities for higher composition laws]{Explicit composition identities for higher composition laws}

\author{Gautam Chinta}
\address{Dept. of Mathematics, The City College of New York, New York, NY 10031 USA}
\email{gchinta@ccny.cuny.edu}

\author{Ajith Nair}
\address{School of Mathematical and Statistical Sciences, Arizona State University, Tempe, AZ 85287-1804, USA}
\email{ajith.nair@asu.edu}

\date{\today}

\begin{document}
\maketitle

\section*{Abstract} \label{Abs}

In 2001, Bhargava proved a composition law for $2 \times 2 \times 2$ integer cubes, which generalized Gauss composition of integral binary quadratic forms. Furthermore, he derived four new composition laws defined on the following spaces: 1) binary cubic forms with triplicate middle coefficients, 2) pairs of binary quadratic forms with duplicate middle coefficients, 3) pairs of quaternary alternating 2-forms and 4) senary alternating 3-forms. In each of the five cases, there is a natural group action on the underlying space with a unique polynomial invariant called the discriminant, and a notion of projectivity for the elements of the space. The strategy behind Bhargava's approach is to construct a discriminant-preserving bijection between the set of orbits under the group action and the set of (tuples of) suitable ideal classes of quadratic rings. The projective ideal classes are equipped with a natural group structure and hence we get a group structure on the spaces of equivalence classes of projective forms of fixed discriminant $D$. In each case the class group of projective forms of discriminant $D$ has a natural interpretation in terms of the narrow class group of the quadratic ring of discriminant $D$. The aim of this paper is to give explicit composition identities (similar to Gauss' formulation of composition of binary quadratic forms) for these higher composition laws.

\section{Introduction} \label{sec:Intro}

\subsection {Background}
Let $Q_1,Q_2$ and $Q_3$ be three integral binary quadratic forms of discriminant $D$.  Then, $Q_3$ is said to be a \emph{composition} of $Q_1$ and $Q_2$ if
\begin{equation}
  \label{eq:compositionF1F2F3}
Q_1(x_1,x_2) Q_2(y_1,y_2) = Q_3(z_1,z_2),
\end{equation}
where $z_1$ and $z_2$ are bilinear forms in $(x_1,x_2)$ and $(y_1,y_2)$, i.e.
        \begin{align}\label{eq:bqf_bilinear}
          z_1 &= a_{11}x_1 y_1 + a_{12} x_1 y_2 + a_{21} x_2 y_1 + a_{22} x_2 y_2\\ \nonumber
z_2 &= b_{11}x_1 y_1 + b_{12} x_1 y_2 + b_{21} x_2 y_1 + b_{22} x_2 y_2
        \end{align}
      for some $a_{ij}, b_{ij} \in \Z \ (i,j=1,2)$,
and
\begin{equation}
  \label{eq:bqfQs}
Q_1(1,0) = a_{11} b_{12} - a_{12} b_{11}, Q_2(1,0) = a_{11} b_{21} - a_{21} b_{11}.
\end{equation}
The simplest and oldest example is the Brahmagupta-Fibonacci-Diophantus identity:
\begin{equation*}
    (a^2+b^2)(c^2+d^2)=(ac+bd)^2+(ad-bc)^2,
\end{equation*}
expressing a product of a sum of two squares as again a sum of two squares.

Denote by $(\text{Sym}^2 \Z^2)^{*}$ the space of binary quadratic forms with integer coefficients.  The group $\text{SL}_2(\Z)$ acts on this space by linear substitution of the variables: a matrix $g = \begin{pmatrix} a & b \\ c & d \end{pmatrix} \in \text{SL}_2(\Z)$ acts on a binary quadratic form $Q(x,y)$ by
\begin{equation}
\label{eq:sl2action}
    Q \mapsto Q^g = Q(ax + by, cx + dy).
\end{equation}
Under this action the discriminant of $Q$ and the g.c.d. of its coefficients are preserved. We call the form $Q$ primitive if its coefficients are relatively prime.  Gauss (\cite{cf1801disquisitiones}) showed that the $\text{SL}_2(\Z)$-equivalence classes of primitive binary quadratic forms of a fixed discriminant $D$ have the structure of an abelian group under composition. We call this group the class group and denote it by $\text{Cl}((\text{Sym}^2 \Z^2)^{*};D)$. The identity element of the class group $\text{Cl}((\text{Sym}^2 \Z^2)^{*};D)$ is the class of the principal form of discriminant $D$ given by
\begin{equation}\label{eq:principal_form}
    Q_{\text{id}}(x,y) = \begin{cases}
        x^2 - \frac{D}{4}y^2 & \text{if } D \equiv 0 \text{ (mod } 4), \\
        x^2 + xy + \frac{1-D}{4}y^2 & \text{if } D \equiv 1 \text{ (mod } 4).
    \end{cases}
\end{equation}
Gauss' formulation has the disadvantage that it is not easy to compute the composition of two given forms. Dirichlet (\cite{dirichlet}) provides a simple method to compute the composition by taking suitable representatives equivalent to the given forms.

In 2001, Bhargava (\cite{bhargava:2004}) gives the following interpretation of Gauss composition using $2 \times 2 \times 2$ integer cubes. Consider a cube $A$ (as shown below) with eight integers $a,b,c,d,e,f,g,h$ at the vertices:
\begin{equation}
  \label{eq:cube}
A=\cube{a}{b}{c}{d}{e}{f}{g}{h}.
\end{equation}
By slicing the cube along the three possible planes, we get three pairs of $2 \times 2$ matrices, say $M_i$ and $N_i$ (for $i=1,2,3$). Let $Q_1^{A},Q_2^{A},Q_3^{A}$ be binary quadratic forms associated to $A$ given by
\begin{equation} \label{eq:CubeBQFs}
Q_i^{A}(x,y) := -\det(M_i x - N_i y).
\end{equation}
For example
\[Q_1^{A}(x,y) :=
-\det\left(
    \begin{pmatrix}
      a&b\\c&d
    \end{pmatrix}  x
-
    \begin{pmatrix}
      e&f\\ g&h
    \end{pmatrix}y
\right),
\]
and similarly for $Q_2^A, Q_3^A$.
See Section \ref{sec:cubecomp} for complete definitions. The three binary quadratic forms $Q_1^{A}, Q_2^{A}, Q_3^{A}$ have the same discriminant, which is also defined to be the discriminant of the cube $A$. Then, Bhargava imposes the relation on the space of primitive binary quadratic forms of a fixed discriminant $D$, that for any triple of primitive forms $Q_1^A, Q_2^A, Q_3^A$ arising from a cube $A$, their formal sum is zero.  This is the Cube Law of Bhargava \cite{bhargava:2004}.  This relation descends to a relation on the space of $\text{SL}_2(\Z)$-classes of primitive forms of discriminant $D$, and after an identity element is chosen, this space becomes a finite abelian group. Moreover, with the choice of identity as the principal binary quadratic form of discriminant $D$ given in \eqref{eq:principal_form}, this group is the same as the class group $\text{Cl}((\text{Sym}^2 \Z^2)^{*};D)$.

In fact, as seen for example in Lemmermeyer (\cite{lemmermeyer2011binary}, Theorem 2), the cube \eqref{eq:cube} also gives us the bilinear forms \eqref{eq:bqf_bilinear} in the composition identity of the forms associated to the cube. Precisely, if $Q^A_1,Q^A_2,Q^A_3$ are the binary quadratic forms \eqref{eq:CubeBQFs} associated to a cube $A$, then
\begin{equation} \label{eq:lemmermeyerid}
    Q^A_2(x_2,-y_2)Q^A_3(x_3,-y_3) = Q^A_1(x_1,y_1),
\end{equation}
where 
\begin{equation*}
        x_1 = \begin{pmatrix} x_2 & y_2 \end{pmatrix} \begin{pmatrix} e & f \\ g & h \end{pmatrix} \begin{pmatrix} x_3 \\ y_3 \end{pmatrix},
        y_1 = \begin{pmatrix} x_2 & y_2 \end{pmatrix} \begin{pmatrix} a & b \\ c & d \end{pmatrix} \begin{pmatrix} x_3 \\ y_3 \end{pmatrix}.
\end{equation*}
The above composition identity is readily verified by substitution. (Note that the definitions of the quadratic forms $Q^A_1,Q^A_2,Q^A_3$ are slightly different in \cite{lemmermeyer2011binary}). There are two more such identities for the composition of $Q_1^A$ with $Q_2^A$, and $Q_1^A$ with $Q_3^A$. (See Proposition 2.2.1 in \cite{nair2024explicit}.)

\subsection{The Cube Law and dual cubes}
\label{subsec:CubeLaw}
The significance of Bhargava's approach comes not only from its reinterpretation of Gauss composition, but also from the generalizations of the notion of composition to higher degree forms. We will describe the explicit higher composition law for cubes here in the introduction and leave the detailed statements of the other cases to subsequent sections.

The space $\Z^2 \otimes \Z^2 \otimes \Z^2$, whose elements can be represented as $2 \times 2 \times 2$ cubes as shown above, has a natural action by the group $\Gamma = \text{SL}_2(\Z) \times \text{SL}_2(\Z) \times \text{SL}_2(\Z).$  The discriminant of the cube, (which is defined to be the common discriminant of the three binary quadratic forms associated to the cube), generates the ring of polynomial invariants of this action. A cube is said to be \emph{projective} if the three quadratic forms associated to it are primitive.

Bhargava shows that the space of $\Gamma$-equivalence classes of projective cubes of discriminant $D$ can also be equipped with the structure of a finite abelian group, which is denoted by $\text{Cl}(\Z^2 \otimes \Z^2 \otimes \Z^2;D)$. Let $[A]$ denote the $\Gamma$-equivalence class of a cube $A$.  The identity element in $\text{Cl}(\Z^2 \otimes \Z^2 \otimes \Z^2;D)$ is $[A_{\text{id},D}]$ for
\begin{equation}
  \label{def:id_cube}
  A_{\text{id},D}=\cube {0}{1}{1}{0}{1}{0}{0}{\frac D4}{}
  \mbox{ or }
  \cube {0}{1}{1}{1}{1}{1}{1}{\ \ \tfrac {D+3}4}{\ \ \ ,}
\end{equation}
as $D$ is 0 or 1 mod 4.
All three quadratic forms associated to the triply symmetric cube $A_{\text{id},D}$ (for $D \equiv 0 \text{ or } 1 \text{ (mod } 4)$) are equal to the principal binary quadratic form of discriminant $D$ given in \eqref{eq:principal_form}.  When the discriminant $D$ is clear from the context we write more simply $[\id]=[A_{\text{id}, D}].$
The group structure on the space of $\Gamma$-equivalence classes of $2 \times 2 \times 2$ cubes of discriminant $D$ follows from the following bijection:
\begin{equation*}
\begin{Bmatrix}
\Gamma \text{-orbits of elements of } \\ \Z^2 \otimes \Z^2 \otimes \Z^2
\end{Bmatrix}
\longleftrightarrow \begin{Bmatrix}
\text{isomorphism classes of pairs}\\ (S,(I_1,I_2,I_3))
\end{Bmatrix}.
\end{equation*}
Here $S$ is an oriented quadratic ring and $(I_1,I_2,I_3)$ is an equivalence class of balanced triple of oriented ideals of $S$. The discriminant of a cube equals the discriminant of the corresponding oriented quadratic ring under this bijection. See Section \ref{sec:cubecomp} for more details.

Alternatively, given three projective cubes $A_1,A_2,A_3$ of common discriminant $D$, we have
\begin{equation*}
  \label{eq:sum_cubes}
  [A_1]+[A_2]+[A_3] = [\id]
\end{equation*}
if and only if the sum
\begin{equation*}
  \label{eq:4}
[Q_i^{A_1}]+[Q_i^{A_2}]+[Q_i^{A_3}]
\end{equation*}
is the identity in $\text{Cl}((\text{Sym}^2 \Z^2)^{*};D)$ for each $i=1,2,3.$
From the ordered triple $(A_1,A_2,A_3)$ of projective cubes as above we get 9 primitive quadratic forms which we arrange in a 3-by-3 matrix
\begin{equation*}
  \label{eq:quadratic_forms}
\left[  \begin{array}{ccc}
Q_1^{A_1} & Q_2^{A_1} & Q_3^{A_1} \\
Q_1^{A_2} & Q_2^{A_2} & Q_3^{A_2} \\
Q_1^{A_3} & Q_2^{A_3} & Q_3^{A_3}
  \end{array}
\right].
\end{equation*}
By the Cube Law, each row sums to the identity in the class group of binary quadratic forms.  Moreover, since $[A_1]+[A_2]+[A_3]=[\id]$, each column sums to the identity as well.  This motivates our definition of {\em dual cubes} $(T_1, T_2, T_3)$ which are defined by the condition
\begin{equation}
  \label{def:dualcubes}
Q_i^{T_j}=Q_{j}^{A_i} \mbox{\ for \ } i,j=1,2,3.
\end{equation}
The existence of dual cubes follows from the converse part of Theorem 1 in (\cite{bhargava:2004}) which states that if $[Q_1] + [Q_2] + [Q_3]=[\id]$ in $\text{Cl}((\text{Sym}^2 \Z^2)^*;D)$, then there is a cube giving rise to $Q_1,Q_2$ and $Q_3$.

The cube $T_j$ is determined up to the action of $\Aut(Q_{j}^{A_1})\times \Aut(Q_j^{A_2})\times \Aut(Q_j^{A_3})$.  It is easy to see that the dual cubes are also each of discriminant $D$ and satisfy
$$[T_1]+[T_2]+[T_3]=[\id].$$  The dual cubes play a prominent rule in the explicit composition law for cubes as we now describe.

\subsection{Explicit composition of cubes}
Before stating our first main result we need a few more definitions.  Let the cube $A\in \Z^2\otimes\Z^2\otimes\Z^2$ have coefficients $(a_{ijk})$.  That is,
\begin{equation}
  \label{def:aijk}
A=\cube{a_{111}}{a_{112}}{a_{121}}{a_{122}}{a_{211}}{a_{212}}{a_{221}}{a_{222}}{.}
\end{equation}
We can think of $A$ as representing either a trilinear form or a pair of bilinear forms on $\Z^2$ as follows:
\begin{align}
  \label{def:cube_to_trilinear}
  A((x_1,x_2)^t,&(y_1, y_2)^t, (z_1, z_2)^t) =
\sum_{i,j,k=1}^2 a_{ijk}x_iy_jz_k
\end{align}
 and
\begin{align} \label{def:cube_to_bilinear}
A((y_1,y_2)^t,&(z_1, z_2)^t) = 
\begin{pmatrix}A(e_1, (y_1, y_2)^t, (z_1, z_2)^t)\\
A(e_2, (y_1, y_2)^t, (z_1, z_2)^t)
\end{pmatrix}
\end{align}
where $e_1, e_2$ are the standard basis vectors.

We further define three cubes whose coefficients are expressed in terms of the coefficients of $A$.  The involution $A\mapsto A^\iota$ swaps the front and back faces 
\begin{equation*}
  A^\iota=\cube{a_{211}}{a_{212}}{a_{221}}{a_{222}}{a_{111}}{a_{112}}
{a_{121}}{a_{122}} \ ,
\end{equation*}
and $A^\sigma$ is obtained by negating the front face of $A^\iota$
\begin{equation*}
  A^\sigma=\cube{(-a_{211})}{(-a_{212})}{(-a_{221})}{(-a_{222})}{a_{111}}{a_{112}}
{a_{121}}{a_{122}} \ .
\end{equation*}
Next, if $Q_1^A(x,y)=px^2+qxy+ry^2,$  the \emph{companion cube} $A'$ of $A$ is defined by
\begin{equation*}
A'(\x,\y,\z)=A(\x'-\epsilon \x,\y,\z)\mbox{ for } \x,\y,\z\in\Z^2,
\end{equation*}
where $\epsilon\in\{0,1\}$ is congruent to $D$ mod 4 and
$$\x'=
\begin{pmatrix}
  \tfrac {q+\epsilon}2& -r\\ p & \tfrac{-q+\epsilon}2
\end{pmatrix}\x.
$$
The coefficients of the companion cube $A'$ are the $c_{ijk}$ of equation (15) in \cite{bhargava:2004}. See Proposition \ref{prop:companioncube}.

Finally, define the \emph{form product} of two cubes $A,B\in \Z^2\otimes\Z^2\otimes\Z^2$ of common discriminant $D$ by
\begin{align}
  \label{def:form_product_cubes}
  (A\star B)&(\x,\y,\z;\uu,\vv,\ww)\\ \nn
=&A(\x,\y,\z)B'(\uu,\vv,\ww)
+A'(\x,\y,\z)B(\uu,\vv,\ww)
+\epsilon A(\x,\y,\z)B(\uu,\vv,\ww)
\end{align}
for $\x,\y,\z,\uu,\vv,\ww\in \Z^2.$
We can now state our first main result.


\begin{restatable*}[]{thm}{cubecomp}
\label{thm:cubecomp}
  Let $A, B, C\in \Z^2\otimes\Z^2\otimes\Z^2$ be three projective cubes of common discriminant $D$ and $\epsilon\in\{0,1\}$ congruent to $D$ mod 4. Let $\x,\y,\z,\uu,\vv,\ww\in \Z^2.$ Then
$$[A]+[B]+[C]=[\id]$$
if and only if there exist cubes $R=(r_{ijk}),S=(s_{ijk}),T=(t_{ijk})$ of discriminant $D$ satisfying
\begin{align} \label{eq:cubecomp_identity}
(B\star C)&(\x,\y,\z;\uu,\vv,\ww)\\ \nn
&= A(R^\sigma(\x,\uu), S^\sigma(\y,\vv), T^\sigma(\z,\ww))
\end{align}
and 
    \begin{align}\label{eq:cubecompQ}
        Q_1^{R} = Q_1^A, \ Q_2^{R} = Q_1^B,
    \end{align}
and
\begin{align}\label{eq:cubecompQ2}
  Q_1^B(1,0)\cdot Q_1^C(1,0)&=Q_1^A(r_{211},r_{111}),\\ \nn
  Q_2^B(1,0)\cdot Q_2^C(1,0)&=Q_2^A(s_{211},s_{111}),\\ \nn
  Q_3^B(1,0)\cdot Q_3^C(1,0)&=Q_3^A(t_{211},t_{111}).
\end{align}
If this is the case, the triple of cubes $(R, S, T)$ is dual to $(A,B,C)$.
\end{restatable*}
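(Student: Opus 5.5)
Both directions will be reduced to the Lemmermeyer-type identity \eqref{eq:lemmermeyerid} for binary quadratic forms, applied one coordinate at a time, together with the ideal-theoretic description of cubes. In that description a projective cube $A$ corresponds to a balanced triple $(I_1,I_2,I_3)$ of oriented ideals of the quadratic ring $S_D$. After choosing bases, $A(\x,\y,\z)$ is the $\Z$-linear coordinate of $x\cdot y\cdot z\in I_1I_2I_3\subseteq S_D$ against a fixed basis of $S_D$; concretely, $A$ gives the coefficient of $\tau$ in the basis $\{1,\tau\}$, where $\tau^2=\epsilon\tau+\tfrac{D-\epsilon}{4}$. The first step is to identify the companion cube $A'$ in the same language. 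Proposition \ref{prop:companioncube} shows that $A'$ computes the other coordinate (the coefficient of $1$) of the same trilinear product. Consequently $A+\epsilon$-corrections reproduce multiplication in $S_D$.

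With this in hand, the form product
\[
A\star B=AB'+A'B+\epsilon AB
\]
is exactly the $\tau$-coordinate of the product of two elements of $S_D$ whose coordinates are $(A',A)$ and $(B',B)$. This is just the multiplication rule $(u_0+u_1\tau)(v_0+v_1\tau)$, whose $\tau$-part is $u_0v_1+u_1v_0+\epsilon u_1v_1$. Hence $(B\star C)(\x,\y,\z;\uu,\vv,\ww)$ is the $\tau$-coordinate of $(x u)(y v)(z w)$. Here $x\in I^B_1$, $u\in I^C_1$, and so on, so that $xu\in I^B_1I^C_1$.

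For the forward direction, assume $[A]+[B]+[C]=[\id]$. Then $I^B_iI^C_i$ is, up to scaling by a suitable element, equivalent to $(I^A_i)^{-1}$, which is the ideal paired with $I^A_i$ in the balanced triple. Choosing bases of the products $I^B_iI^C_i$ compatible with the basis of the $A$-ideal, each bilinear map $(\x,\uu)\mapsto xu$, written in coordinates, defines a cube whose $\sigma$-twist is $R^\sigma$ (respectively $S^\sigma$, $T^\sigma$). This yields \eqref{eq:cubecomp_identity}. The cubes $R,S,T$ encode the multiplication maps $I^B_i\times I^C_i\to I^B_iI^C_i$, so they correspond to the triples $(I^A_i,I^B_i,I^C_i)$ read columnwise. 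That is exactly the duality condition \eqref{def:dualcubes}, and it gives \eqref{eq:cubecompQ}. The norm conditions \eqref{eq:cubecompQ2} follow by evaluating at the first basis vectors: the norm of $x_1u_1$ is $Q_i^B(1,0)Q_i^C(1,0)$ (up to the normalizing norms), and its coordinates in the $A$-basis are $(r_{211},r_{111})$ after the $\sigma$ sign change.

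For the converse, given $R,S,T$ satisfying the conditions, I would take norms, or equivalently apply the discriminant and form identities coordinatewise. Condition \eqref{eq:cubecompQ} places $Q_1^A$ and $Q_1^B$ as two of the forms of $R$, so by the Cube Law the third form $Q_3^R$ lies in the class $-[Q_1^A]-[Q_1^B]$. The identity \eqref{eq:cubecomp_identity}, specialized with $\y,\z,\vv,\ww$ fixed, together with \eqref{eq:cubecompQ2}, should force $Q_3^R=Q_1^C$ up to equivalence. The analogous arguments for $S$ and $T$ then give $[Q_i^A]+[Q_i^B]+[Q_i^C]=0$ for each $i$, which is equivalent to $[A]+[B]+[C]=[\id]$.

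The main obstacle is this converse step: extracting the class of $Q_1^C$ from the polynomial identity. I would first try to apply the composition identity \eqref{eq:lemmermeyerid} to $R$ and compare both sides via the norm form. Failing that, I would reduce to a convenient representative of the orbit, as in Dirichlet's method, and verify directly by substitution.
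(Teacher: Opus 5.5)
Your forward direction is essentially the paper's argument. Normalize so that $I^A_mI^B_mI^C_m=S(D)$, and let $R,S,T$ be the cubes of the column triples $(I^A_m,I^B_m,I^C_m)$. Use Proposition~\ref{lemma:cubebasesmultiplication} to write $xu$ in the basis $\langle\alpha_1',-\alpha_2'\rangle$ of $(I^A_1)^{-1}$ with coordinates $R^\iota(\x,\uu)$, multiply, and compare $\tau$-coordinates. You should state explicitly that the triple product lies in $(I^A_1)^{-1}(I^A_2)^{-1}(I^A_3)^{-1}$, whose cube is $\tilde A$, and that \eqref{eq:cube_inverse_relation} is what turns $\tilde A(R^\iota,S^\iota,T^\iota)$ into $A(R^\sigma,S^\sigma,T^\sigma)$.

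The converse, however, has a genuine gap, for two reasons.

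\emph{The step for $S$ and $T$ has nothing to run on.} Hypothesis \eqref{eq:cubecompQ} constrains only $R$, and no hypothesis identifies any form of $S$ or $T$. So the Cube Law yields no relation among the $Q_2$'s or the $Q_3$'s. Index $1$ alone cannot give $[A]+[B]+[C]=[\id]$, because the cube class group is $\mathrm{Cl}^+(S(D))\times\mathrm{Cl}^+(S(D))$.

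\emph{The index-$1$ step is unproved, and false as you state it.} You claim the identity together with \eqref{eq:cubecompQ2} forces $Q_3^R\sim Q_1^C$. Take $(A,B,C;R,S,T)$ as in the forward direction, replace $A$ by $\tilde A$, and negate the back-face entries $r_{2jk},s_{2jk},t_{2jk}$. Both \eqref{eq:cubecomp_identity} and \eqref{eq:cubecompQ2} are preserved. Yet the new $R$ has third form $-Q_1^C$, which is not equivalent to $Q_1^C$ when $D<0$, and the class sum becomes $-2[A]$ (cf.\ Remark~\ref{rmk:nativi}). The only hypothesis that fails here is $Q_2^R=Q_1^B$. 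So \eqref{eq:cubecompQ} must enter the derivation itself, not just the Cube Law bookkeeping, and substituting into a convenient representative cannot replace this.

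\emph{The missing idea} is to upgrade the $\tau$-coordinate identity to an identity in $K$. Let $P$ be the product of $x,y,z\in I^B_1,I^B_2,I^B_3$ and $u,v,w\in I^C_1,I^C_2,I^C_3$. Let $Q$ be the product of the elements of $(I^A_1)^{-1},(I^A_2)^{-1},(I^A_3)^{-1}$ whose coordinates, in bases compatible with $\tilde A$, are $R^\iota(\x,\uu)$, $S^\iota(\y,\vv)$, $T^\iota(\z,\ww)$. The hypothesis says only that $P-Q$ has zero $\tau$-coordinate.

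The paper shows the $1$-coordinates also agree, $B'C'+\tfrac{D-\epsilon}{4}BC=\tilde A'(R^\iota(\x,\uu),S^\iota(\y,\vv),T^\iota(\z,\ww))$, and this is where \eqref{eq:cubecompQ} is used. We have $\tilde A'=\tilde A(L_1^{\tilde A}\,\cdot,\cdot,\cdot)$, and $L_1^R=L_1^A$, $L_2^R=L_1^B$. The companion relations \eqref{eq:companionL} for $R$ then move the operator from the output of $R^\iota$ to its input: $L_1^{\tilde A}R^\iota(\x,\uu)=R^\iota(L_1^B\x,\uu)$. Hence the $1$-coordinate of $Q$ equals $(B\star C)(L_1^B\x,\y,\z;\uu,\vv,\ww)$, which expands to $B'C'+\tfrac{D-\epsilon}{4}BC$, the $1$-coordinate of $P$.

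With $P=Q$ established, set $\y=\z=\vv=\ww=e_1$. This gives $I^B_1I^C_1\subseteq\lambda(I^A_1)^{-1}$ for an explicit $\lambda\in K^\times$. The second and third lines of \eqref{eq:cubecompQ2} make the norms equal, so $I^B_1I^C_1=\lambda(I^A_1)^{-1}$. Indices $2$ and $3$ go the same way using the other norm conditions, and no information about the forms of $S$ or $T$ is needed.
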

We prove this result and give an example of the composition identity in Section \ref{sec:cubecomp}.
\begin{remark}
    Note that we can write the composition identity in the above theorem in an alternative manner by choosing the representative
    \begin{equation}
      \label{def:Atilde}
      \tilde{A}=
\cube{-a_{111}}{a_{112}}{a_{121}}{(-a_{122})}{a_{211}}{(-a_{212})}{(-a_{221})}
{a_{222}}{.}
    \end{equation}
in the inverse class of $[A]$. The identity we get in this case is
    $$ (B \star C) (\underline{x},\underline{y},\underline{z};\underline{u},\underline{v},\underline{w}) = \tilde{A}(R^\iota(\underline{x},\underline{u}),S^\iota(\underline{y},\underline{v}),T^\iota(\underline{z},\underline{w})).$$
\end{remark}

\subsection{Summary of main results and outline of paper}
The full list of spaces with higher composition laws that Bhargava considers in
 \cite{bhargava:2004} is
\begin{enumerate}
\item $\Z^2 \otimes \Z^2 \otimes \Z^2$: binary trilinear forms,
\item $\text{Sym}^3 \Z^2$: binary cubic forms,
\item $\Z^2 \otimes \text{Sym}^2 \Z^2$: pairs of binary quadratic forms,
\item $\Z^2 \otimes \wedge^2 \Z^4$: pairs of quaternary alternating $2$-forms, and
\item $\wedge^3 \Z^6$: senary alternating $3$-forms.
\end{enumerate}
The spaces in (2)-(5) are obtained from the space $\Z^2 \otimes \Z^2 \otimes \Z^2$ by using certain multilinear operations, such as symmetrization, skew-symmetrization and castling.
Each of the five spaces $V(\Z)$ considered in the higher composition laws above is endowed with a natural action by a group $G(\Z)$, and in each case, there is a unique polynomial invariant under this action, which is called the discriminant. This is in complete analogy with the case $(\text{Sym}^2 \Z^2)^*$  of binary quadratic forms. Also, there is a notion of projectivity for elements of these spaces $V(\Z)$ which is analogous to primitivity for binary quadratic forms. Bhargava shows that in each of the five cases the set of projective $G(\Z)$-orbits of elements in $V(\Z)$ having discriminant $D$ has a natural structure of a finite abelian group.
The proof of the group structures associated to the higher composition laws relies on correspondences between the $G(\Z)$-orbits of elements in $V(\Z)$ and certain suitable ideal classes of quadratic rings. Establishing these correspondences constitutes a major part of the work in \cite{bhargava:2004}.

In this article, our goal is to formulate the five higher composition laws for the spaces above in a manner similar to Gauss's formulation of the composition law for binary quadratic forms, without mentioning the underlying ideal class group.  Theorem \ref{thm:cubecomp} above is our first example in the case of $\Z^2\otimes \Z^2\otimes\Z^2.$  The case of binary cubic forms $\text{Sym}^3 \Z^2$ has previously been considered by Nativi \cite{nativi2019analogue} whose results (after a simple correction) are essentially the same as ours in Section \ref{sec:bcfcomp}.  

Our results in all cases follow the pattern established in Eqs. (\ref{eq:compositionF1F2F3}-\ref{eq:bqfQs}) and Eq. \ref{eq:lemmermeyerid}: 
\begin{quote}\textit{
a product (or sum of products) of two forms is equated with a third form evaluated after a bilinear (or multilinear) change of variable; then the multilinear variable change is explicitly identified in terms of the two original forms.
}
\end{quote}

The spaces associated to the higher composition laws also appear in the study of prehomogeneous vector spaces over algebraically closed fields by Sato-Kimura \cite{satokimura}, and over the field of rational numbers by Wright-Yukie \cite{Wright1992}. Bhargava's work illustrates that the integer orbits of these spaces also have a rich and interesting structure.

Moreover, in \cite{hcl2}, \cite{hcl3} and \cite{hcl4}, Bhargava describes more composition laws (also originating in the work of \cite{satokimura} and \cite{Wright1992} on prehomogeneous vector spaces) where the corresponding spaces of forms parametrize arithmetic objects related to cubic, quartic and quintic rings. These parametrizations may be used to obtain results on counting number fields of degree $n\leq 5$ with bounded discriminant. See, for example \cite{bhargavaquartic, bhargava:quintic}.
There have also been some recent papers on composition identities with certain applications to Diophantine equations. Among them, we mention Choudhry \cite{choudhry:2022} and Duke \cite{duke2022some}. We also point out the thesis of M. Oller Riera \cite{oller2021bhargava} which builds upon the work of Bhargava in the cubic case \cite{hcl2} and the work of Bhargava-Ho \cite{bhargavaho} to derive composition laws of genus 1 curves.

The article is organized as follows. In Section \ref{sec:Gausscomp}, we review the relation between Gauss composition and multiplication of ideals in quadratic rings.  Sections \ref{sec:cubecomp}, \ref{sec:bcfcomp}, \ref{sec:pbqfcomp}, \ref{sec:pqaltcomp} and \ref{sec:senaltcomp} are devoted to our main task of deriving explicit composition identities for the five higher composition laws. In each of these sections, we start by explaining the correspondence between forms and ideals as described in \cite{bhargava:2004}, and then go on to state and prove the composition identities, which give an alternative description of the group law on these spaces. 
We also include examples of composition laws at the end of the corresponding sections.

\subsection*{Acknowledgments}
The authors are grateful to Fikreab Solomon for his help and advice in the early stages of this project.  The first named author is supported by the MPS-TSM program of the Simons Foundation and PSC-CUNY Award 65400-00-55.

\section{Gauss composition of binary quadratic forms} \label{sec:Gausscomp}

As a warm-up to the identities for the higher composition laws, we quickly review the classical case of Gauss composition of binary quadratic forms. We begin by describing the correspondence between binary quadratic forms and oriented ideals using the notation in \cite{bhargava:2004}. 
See also \cite{cohen}, \cite{cox} and \cite{bouyer} for further background.

A \emph{quadratic ring} is a commutative ring (with 1) whose additive group is a free $\Z$-module of rank 2. Let $S$ be a quadratic ring and $\langle \alpha_1, \alpha_2 \rangle$ a $\Z$-basis of $S$. Then, the \emph{discriminant} of $S$ is defined as $\text{det}(\text{tr}(\alpha_i \alpha_j))$ where $\text{tr}(x)$ is the trace of the ``multiplication by $x$'' endomorphism. The discriminant is an integer and is congruent to $0$ or $1$ (mod $4)$. Let $\epsilon \in \lbrace 0,1 \rbrace$ and $D\equiv\epsilon$ (mod $4$).  Up to isomorphism, there exists a unique quadratic ring $S(D)$ of discriminant $D$ with $\Z$-basis $\langle 1,\tau \rangle$ satisfying
\begin{equation} \label{eq:taurelation}
\tau^2 = \epsilon \tau + \frac{D - \epsilon}{4}.
\end{equation}
To avoid the extra automorphism of $S(D)$, which sends $\sqrt{D}$ to $-\sqrt{D}$, one considers the notion of an \emph{oriented quadratic ring}, which is a quadratic ring $S(D)$ together with a choice of $\sqrt{D}$. An oriented quadratic ring is said to be \emph{non-degenerate} if its discriminant is nonzero.

A $\Z$-basis $\langle 1 ,\tau \rangle$ of $S = S(D)$ is said to be positively oriented if $\text{tr}(\tau/\sqrt{D}) > 0$. Let $K = S \otimes \Q$. A $\Z$-basis $\langle \alpha_1,\beta_1,\alpha_2,\beta_2,...,\alpha_n,\beta_n \rangle$ of a rank $2n$ submodule of $K^n$ is said to be positively (or negatively) oriented if the change-of-basis matrix taking the $\Q$-basis 
$$\langle (1,0,...,0),(\tau,0,...,0),(0,1,...,0),(0,\tau,...,0),...,(0,...,1),(0,...,\tau) \rangle$$ 
of $K^n$ to the basis $\langle \alpha_1,\beta_1,\alpha_2,\beta_2,...,\alpha_n,\beta_n \rangle$ has positive (or negative) determinant.

An \emph{oriented ideal} of an oriented quadratic ring $S$ is a pair $(I,\mu)$, where $I$ is a fractional ideal of $S$ and $\mu = \pm 1$ is the orientation.  Multiplication of oriented ideals is defined component-wise. Two oriented ideals $(I_1,\mu_1)$ and $(I_2,\mu_2)$ are said to be equivalent if there exists $\kappa \in K^{\times}$ such that $(I_1,\mu_1) = (\kappa I_2, \text{sgn}(N(\kappa)) \mu_2) $. The norm of an oriented ideal $(I,\mu)$ is given by 
\begin{equation}\label{eq:ideal_norm}
N((I,\mu)) = \begin{vmatrix}a_1 & a_2 \\ b_1 & b_2 \end{vmatrix}
\end{equation}
where $\langle a_1 + a_2 \tau,b_1+b_2 \tau \rangle$ is a $\Z$-basis of $(I,\mu)$ which is oriented as per the orientation of $(I,\mu)$.
Then, the \emph{narrow class group} of $S$, denoted by $\text{Cl}^+(S)$, is defined as the group of equivalence classes of invertible oriented ideals of $S$. 

The next theorem provides a correspondence between binary quadratic forms and ideals in quadatic rings.

\begin{theorem}\label{thm:bqfidealcorrespondence}
Let $D$ be an integer congruent to $0$ or $1$ modulo $4$. There is a canonical bijection between the set of $\mathrm{SL}_2(\Z)$-classes of integral binary quadratic forms of discriminant $D$ and the set of oriented ideal classes of the oriented quadratic ring $S(D)$. Under this correspondence, primitive binary quadratic forms correspond to invertible oriented ideals and the correspondence induces a bijection between the set of $\text{SL}_2(\Z)$-classes of primitive binary quadratic forms of discriminant $D$ and the set of invertible oriented ideal classes of the oriented quadratic ring $S(D)$.
\end{theorem}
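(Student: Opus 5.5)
We construct the bijection explicitly in both directions and then check that it is well defined on classes, that the two maps are mutually inverse, and that it respects primitivity and invertibility.

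\emph{From ideals to forms.} Given an oriented ideal $(I,\mu)$ of $S=S(D)$, choose a $\Z$-basis $\langle \alpha,\beta\rangle$ of $I$ oriented according to $\mu$. Define
$$Q(x,y)=\frac{N(x\alpha-y\beta)}{N((I,\mu))},$$
where $N(\cdot)$ on the right is the oriented norm \eqref{eq:ideal_norm}. The steps are as follows.
\begin{enumerate}
\item Show that $Q$ has integer coefficients. This follows because $N(\xi)\in N(I)\Z$ for every $\xi\in I$, using that $I\bar I\subseteq N(I)S$ for lattices in a quadratic order.
\item Compute the discriminant of $Q$. Write $\alpha=a_1+a_2\tau$ and $\beta=b_1+b_2\tau$ and expand using \eqref{eq:taurelation}. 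The discriminant of $N(x\alpha-y\beta)$ equals $(a_1b_2-a_2b_1)^2D$, so dividing by $N(I)^2$ gives discriminant exactly $D$. The orientation sign is what makes the division by the signed norm correct.
\item Check well-definedness. A change of oriented basis by an element of $\mathrm{SL}_2(\Z)$ changes $Q$ by the action \eqref{eq:sl2action}. Replacing $(I,\mu)$ by $(\kappa I,\operatorname{sgn}(N\kappa)\mu)$ multiplies both the numerator and the oriented norm by $N(\kappa)$, so $Q$ is unchanged.
\end{enumerate}

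\emph{From forms to ideals.} Given $Q=px^2+qxy+ry^2$ of discriminant $D$, we define the inverse map as follows.
\begin{enumerate}
\item For $p\neq 0$, take the lattice $I=\langle p,\ \tfrac{q-\epsilon}{2}+\tau\rangle$ with orientation $\operatorname{sgn}(p)$, adjusting signs to match the convention above.
\item Check that $I$ is an $S$-module, i.e.\ that $\tau I\subseteq I$. This uses $q^2-4pr=D$ and $q\equiv\epsilon \pmod 2$.
\item Verify directly that applying the ideals-to-forms map returns $Q$.
\item If $p=0$, first move $Q$ within its $\mathrm{SL}_2(\Z)$-orbit to a form with $p\neq 0$, which is possible when $Q\neq 0$. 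For $D$ a square one must also handle degenerate cases; here I would follow Bhargava's treatment and allow fractional ideals in $K=S\otimes\Q$, which need not be a field.
\end{enumerate}
Injectivity on classes then follows. If two oriented ideals give $\mathrm{SL}_2(\Z)$-equivalent forms, we may assume they give the same form. The basis vectors then have the same ratio $\beta/\alpha$, as a root of $Q(x,1)$ in the correct embedding fixed by the orientation. Hence the ideals differ by the scalar $\kappa=\alpha'/\alpha$, and the orientation is compatible because the norms agree.

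\emph{Primitivity and invertibility.} Here we use the standard characterization that a lattice $I$ in a quadratic order $S$ is invertible exactly when its ring of multipliers is all of $S$. For $I=\langle p,\ \tfrac{q-\epsilon}{2}+\tau\rangle$, compute the multiplier ring. It is the order of conductor $\gcd(p,q,r)$, so it equals $S$ precisely when $Q$ is primitive.

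I expect this last equivalence, together with the sign and orientation bookkeeping in the square-$D$ and degenerate cases, to be the main obstacle. I would first try to prove invertibility via the identity $I\bar I=N(I)\cdot\gcd(p,q,r)\cdot S$ restricted to the multiplier ring, which follows from direct computation with the basis above.
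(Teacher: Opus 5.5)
The paper gives no proof of this theorem. It is quoted as background from \cite{bhargava:2004} (see also \cite{cox}, \cite{cohen}). Your plan is the standard argument: the norm form $N(x\alpha-y\beta)/N(I)$, an explicit lattice for $Q$ after normalizing to $p\neq 0$, injectivity through $\beta/\alpha$, and primitivity versus invertibility through the multiplier ring. As written, however, two steps fail, and two others need tightening.

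First, your inverse lattice has the wrong sign, and this is more than a cosmetic adjustment. Put $\sqrt D:=\tau-\bar\tau=2\tau-\epsilon$. Your basis is then $\langle p,\tfrac{q+\sqrt D}{2}\rangle$, its oriented norm is $p$, and
\[
N\bigl(xp-y\tfrac{q+\sqrt D}{2}\bigr)/p=px^2-qxy+ry^2=Q(x,-y).
\]
So your ``verify it returns $Q$'' step fails: your lattice is the conjugate of the right one, and you land in the inverse class. Take instead $I=\langle p,\ \tau-\tfrac{q+\epsilon}{2}\rangle=\langle p,\tfrac{-q+\sqrt D}{2}\rangle$ with orientation $\mathrm{sgn}(p)$. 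This basis satisfies exactly the relations of Proposition \ref{prop:bhargava20}, and the same computation returns $px^2+qxy+ry^2$.

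The same bookkeeping repairs your injectivity step. There, ``the correct embedding'' has no meaning when $D$ is a square, since then $K\cong\Q\times\Q$. Instead use $\alpha\bar\beta-\bar\alpha\beta=-N(I)\sqrt D$ together with $Q(x,y)=p(x-\theta y)(x-\bar\theta y)$, where $\theta=\beta/\alpha$ and $p=N(\alpha)/N(I)\neq 0$. These give $\theta+\bar\theta=-q/p$ and $\theta-\bar\theta=\sqrt D/p$, so $\theta=\tfrac{-q+\sqrt D}{2p}$ is determined by $Q$ in every case. Then $\kappa=\alpha'/\alpha$ satisfies $N(\kappa)=N(I')/N(I)$, which handles the orientation.

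Second, the case you defer cannot be handled, because the statement is false there. For $D=0$ the zero form has discriminant $D$, but it is never of the form $N(x\alpha-y\beta)/N(I)$. Indeed, $N(u+v\tau)=u^2$ vanishes only on $\Q\tau$, so it cannot vanish identically on a rank-$2$ lattice. The theorem must be read with $D\neq 0$ (Bhargava's nondegeneracy hypothesis) or with $Q=0$ excluded. Your ``possible when $Q\neq 0$'' should be stated as an explicit hypothesis, not left to ``following Bhargava''.

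Finally, tighten the last step. Let $g=\gcd(p,q,r)$. Then $I=gI_0$, where $I_0$ is the lattice of $Q/g$. So the multiplier ring is the over-order $S_0=\langle 1,\tfrac{-q/g+\sqrt D/g}{2}\rangle\supseteq S$ of index $g$ (not an order ``of conductor $g$''). Direct computation gives $I\bar I=p\,\langle g,\tfrac{-q+\sqrt D}{2}\rangle=N(I)\,g\,S_0$, not $N(I)\,g\,S$. For $g=1$ this equals $N(I)S$, so $N(I)^{-1}\bar I$ is an explicit inverse. For $g>1$, $S_0\supsetneq S$ stabilizes $I$, which rules out invertibility: $IJ=S$ would force $S_0\subseteq IJ=S$.
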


We recall that the set of $\text{SL}_2(\Z)$-equivalence classes of primitive binary quadratic forms of a fixed discriminant $D$ has the structure of a finite abelian group under composition, which is called the class group and denoted by $ \text{Cl}((\mathrm{Sym}^2 \Z^2)^*;D)$. The following well-known result describes the relation between the groups $\text{Cl}^+(S(D))$ and $ \text{Cl}((\mathrm{Sym}^2 \Z^2)^*;D)$.

\begin{theorem}
    Let $D$ be a nonzero integer congruent to $0$ or $1$ (mod $4$). There is a canonical isomorphism of groups
    $$ \mathrm{Cl}((\mathrm{Sym}^2 \Z^2)^*;D) \cong \mathrm{Cl}^+(S(D)).$$
\end{theorem}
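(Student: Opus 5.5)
The plan is to take the canonical bijection of Theorem~\ref{thm:bqfidealcorrespondence} restricted to primitive forms and invertible oriented ideals, and to check that it turns Gauss composition into multiplication of oriented ideals. Since both sides are already groups (the form side by Gauss, the ideal side by definition of $\mathrm{Cl}^+(S(D))$), a bijective homomorphism is an isomorphism. Concretely, the bijection sends an oriented ideal $(I,\mu)$ with a correctly oriented $\Z$-basis $\langle\alpha,\beta\rangle$ to
\[
Q_I(x,y)=\frac{N(x\alpha-y\beta)}{N((I,\mu))},
\]
where $N((I,\mu))$ is the signed norm of \eqref{eq:ideal_norm}. The inverse sends a primitive form $px^2+qxy+ry^2$ to the oriented ideal $\bigl(\Z p+\Z\frac{-q+\sqrt D}{2},\operatorname{sgn} p\bigr)$, with the obvious adjustment when $p<0$ or $p=0$. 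I will take these maps, their well-definedness on classes, and the primitive$\leftrightarrow$invertible correspondence as given by Theorem~\ref{thm:bqfidealcorrespondence}.

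\textbf{Identity.} The principal form $Q_{\id}$ of \eqref{eq:principal_form} corresponds to $(S(D),+1)$ with basis $\langle 1,\tau\rangle$. Indeed, $N(x-y\tau)=x^2-\epsilon xy-\frac{D-\epsilon}{4}y^2$, which is $\mathrm{SL}_2(\Z)$-equivalent to $Q_{\id}$ (replace $y$ by $-y$ and use the orientation convention). So identities match.

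\textbf{Products match (direct direction).} Let $I_1=\langle\alpha_1,\beta_1\rangle$ and $I_2=\langle\alpha_2,\beta_2\rangle$ be invertible, and $I_3=I_1I_2$ with an oriented basis $\langle\alpha_3,\beta_3\rangle$. Expand each of the four products $\alpha_1\alpha_2,\ \alpha_1\beta_2,\ \beta_1\alpha_2,\ \beta_1\beta_2$ in the basis of $I_3$. This yields integers $a_{ij},b_{ij}$ with
\[
(x_1\alpha_1-x_2\beta_1)(y_1\alpha_2-y_2\beta_2)=z_1\alpha_3-z_2\beta_3,
\]
where $z_1,z_2$ are bilinear as in \eqref{eq:bqf_bilinear}, with suitable sign conventions. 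Taking norms and using $N(I_1)N(I_2)=N(I_3)$ (valid for invertible ideals) gives
\[
Q_{I_1}(x_1,x_2)\,Q_{I_2}(y_1,y_2)=Q_{I_3}(z_1,z_2),
\]
which is \eqref{eq:compositionF1F2F3}. It remains to check the normalization \eqref{eq:bqfQs}, namely $Q_1(1,0)=a_{11}b_{12}-a_{12}b_{11}$ and similarly for $Q_2$. The $2\times2$ minor $a_{11}b_{12}-a_{12}b_{11}$ is the determinant expressing $\langle\alpha_1\alpha_2,\alpha_1\beta_2\rangle=\alpha_1 I_2$ in terms of the basis of $I_3$. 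That determinant equals $N(\alpha_1)N(I_2)/N(I_3)=N(\alpha_1)/N(I_1)=Q_{I_1}(1,0)$, and the analogous computation gives $Q_2(1,0)$.

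\textbf{Products match (converse direction).} To avoid proving uniqueness of Gauss composition from scratch, I would use Dirichlet's method. Every pair of classes has concordant representatives
\[
Q_1=[a_1,b,a_2c],\qquad Q_2=[a_2,b,a_1c]
\]
with $\gcd(a_1,a_2,b)=1$, and their Gauss composite is the class of $[a_1a_2,b,c]$. The corresponding ideals are $\bigl(a_i,\tfrac{-b+\sqrt D}{2}\bigr)$ for $i=1,2$. A direct computation shows their product is $\bigl(a_1a_2,\tfrac{-b+\sqrt D}{2}\bigr)$. This uses $\bigl(\tfrac{-b+\sqrt D}{2}\bigr)^2\equiv -b\,\tfrac{-b+\sqrt D}{2}$ modulo $a_1a_2$, which follows from $b^2-4a_1a_2c=D$, together with $\gcd(a_1,a_2,b)=1$. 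Orientations multiply correctly because all leading coefficients are handled with signs. Hence the bijection is a homomorphism, and so an isomorphism.

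I expect the main obstacle to be the sign and orientation bookkeeping. This means matching the convention \eqref{eq:bqfQs} (which determines the direction of composition rather than its inverse) with the orientation of bases and the sign of $N(\kappa)$ in the equivalence of oriented ideals, including the cases $D>0$ and forms with negative leading coefficient. I would first settle it on the concordant representatives, where every quantity is explicit, and only then invoke the general identity above.
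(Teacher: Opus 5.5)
The paper gives no proof of this theorem. It quotes it as a well-known background fact, so there is no argument of the paper's to set yours against. Your proposal is correct: it is the classical proof, which transports ideal multiplication through the explicit form/ideal dictionary, adapted to oriented ideals and the narrow class group.

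The framework the paper builds on points to a different route, namely Bhargava's. By the cube--ideal correspondence (Theorem 11 of \cite{bhargava:2004}) and Proposition~\ref{prop:normQ}, a projective cube corresponds to a triple of invertible ideals with $I_1I_2I_3=S$ whose norm forms are $Q_1^A,Q_2^A,Q_3^A$. The Cube Law is then exactly the image of the relation $I_1I_2I_3=S$, and \eqref{eq:lemmermeyerid} writes out the resulting composition identity. That route establishes the group law on forms, including well-definedness of composition on classes, together with the isomorphism, at the cost of the full cube correspondence. Yours is shorter and more elementary but takes Gauss's theorem as input. Your direct computation is also the rank-one prototype of the paper's own method in Section~\ref{sec:cubecomp}: there, products of basis elements are expanded in a basis of a product ideal (compare \eqref{eq:idealproduct}) to produce the multilinear change of variables.

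On economy: the direct half already finishes the proof. The signs from $x\alpha-y\beta$ cancel in the minors, so $a_{11}b_{12}-a_{12}b_{11}$ is the determinant of $\langle\alpha_1\alpha_2,\alpha_1\beta_2\rangle$ relative to $\langle\alpha_3,\beta_3\rangle$. That determinant is $N(\alpha_1)N(I_2)/N(I_1I_2)=Q_{I_1}(1,0)$, and likewise for $Q_{I_2}(1,0)$. When you say the form side is already a group under composition, you are using Gauss's theorem that any $Q_3$ satisfying \eqref{eq:compositionF1F2F3}--\eqref{eq:bqfQs} is determined up to $\mathrm{SL}_2(\Z)$-equivalence by the classes of $Q_1,Q_2$. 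That gives $[Q_{I_1}]+[Q_{I_2}]=[Q_{I_1I_2}]$ at once.

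The Dirichlet step does not avoid this uniqueness statement. The sentence ``their Gauss composite is the class of $[a_1a_2,b,c]$'' already presupposes it. So your two halves are alternative proofs, not complementary ones; the second is the natural one if the form group law is defined via Dirichlet composition.

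Two small slips:
\begin{itemize}
\item $y\mapsto -y$ is not an $\mathrm{SL}_2(\Z)$-substitution. For $\epsilon=1$, use $(x,y)\mapsto(x+y,y)$ to pass from $x^2-xy+\frac{1-D}{4}y^2$ to $Q_{\text{id}}$. Alternatively, drop the identity step, which is automatic for a bijective homomorphism.
\item No adjustment is needed for $p<0$. The basis $\langle p,\frac{-q+\sqrt D}{2}\rangle$ is then negatively oriented with signed norm $p$; only $p=0$ requires moving to an equivalent form.
\end{itemize}
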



\section{Composition of \texorpdfstring{$2 \times 2 \times 2$}{2 x 2 x 2} cubes}
\label{sec:cubecomp}
 
As stated in Section \ref{sec:Intro}, Bhargava (\cite{bhargava:2004}) presents an approach to Gauss composition using $2 \times 2 \times 2$ cubes and also proves a composition law on the space of cubes. We briefly discuss this before describing our result on composition identities for  cubes.

Let $A\in \Z^2\otimes\Z^2\otimes\Z^2.$  We represent $A$ pictorially by the cube
\begin{equation*}
A = \cube{a}{b}{c}{d}{e}{f}{g}{h}{}
\end{equation*}
with $a,b,c,d,e,f,g\in\Z.$
For convenience we will also sometimes use the notation
$$A=[a,b,c,d,e,f,g,h].
$$
The cube $A$
can be sliced into three pairs of $2 \times 2$ matrices $(M_i,N_i)$, for $i=1,2,3$, given by
\begin{equation*}
\begin{split}
        M_1 = \begin{bmatrix} a & b \\ c & d \end{bmatrix}, N_1 = \begin{bmatrix} e & f \\ g & h \end{bmatrix}, \\
        M_2 = \begin{bmatrix} a & e \\ b & f \end{bmatrix}, N_2 = \begin{bmatrix} c & g \\ d & h \end{bmatrix}, \\
        M_3 = \begin{bmatrix} a & c \\ e & g \end{bmatrix}, N_3 = \begin{bmatrix} b & d \\ f & h \end{bmatrix}.
\end{split}
\end{equation*}
Note that our $(M_2,N_2)$ and $(M_3,N_3)$ are swapped in comparison with \cite{bhargava:2004}. We define three binary quadratic forms associated to $A$ by 
\begin{equation}\label{eq:cubebqf}
Q_i^A(x,y) = -\text{det}(M_i x - N_i y).
\end{equation}
The three quadratic forms have the same discriminant, which we call also the discriminant of the cube $A$, denoted $\text{disc}(A)$. The cube $A$ is called \emph{projective} if the quadratic forms associated to $A$ are all primitive, and the cube is said to be \emph{nondegenerate} if its discriminant is nonzero. 

The group $\Gamma = \text{SL}_2(\Z) \times \text{SL}_2(\Z) \times \text{SL}_2(\Z)$ acts on a cube $A$ in the following way. The element $(\gamma, \text{id},\text{id})$ acts on $A$ by transforming the pair $(M_1,N_1)$ to $(pM_1+qN_1,rM_1+sN_1)$, where $\gamma = \begin{bmatrix} p & q \\ r & s \end{bmatrix}$. The actions of $(\text{id},\gamma,\text{id})$ and $(\text{id},\text{id},\gamma)$ are defined analogously, and the three actions commute with each other, thus defining an action of the group $\Gamma$ on the space of $2 \times 2 \times 2$ cubes. Also, if $i \neq j$, then the $j^{th}$-factor of $\text{SL}_2(\Z)$ in $\Gamma$ acts trivially on $Q_i^A$, and the $i^{th}$-factor transforms $Q_i^A(x,y)$ to $Q_i^A(px-ry,-qx+sy)$. This shows that the discriminant of the cube is preserved under the action of $\Gamma$. 

The Cube Law of Bhargava sets the formal sum of $Q_1^A,Q_2^A$ and $Q_3^A$ to be trivial in the free abelian group generated by the primitive binary quadratic forms of a fixed discriminant $D = \text{disc}(A)$. The primitive binary quadratic forms that are equivalent under the action of $\text{SL}_2(\Z)$ get identified by the relation imposed by the Cube Law, and after choosing the identity element to be the form $Q_{\text{id,D}}$ defined in  (\ref{eq:principal_form}), we obtain the relation of $\text{SL}_2(\Z)$ equivalence classes
\begin{align*}
    [Q_1^A] + [Q_2^A] + [Q_3^A] = [Q_{\text{id,D}}].
\end{align*}
Thus, Bhargava cubes give us Gauss composition. See equation (\ref{eq:lemmermeyerid}) for an explicit composition identity involving $Q_1^A,Q_2^A$ and $Q_3^A$.  In Section \ref{subsec:CubeLaw} we described how the Cube Law also gives rise to a group structure on the set of $\Gamma$-equivalence classes of projective cubes of discriminant $D$. The resulting group is denoted $\text{Cl}(\Z^2 \otimes \Z^2 \otimes \Z^2;D)$.

We will have need to introduce cubes constructed from simple (signed) permutations of the coefficients of $A$.  For the cube $A=[a,b,c,d,e,f,g,h]$ we define the associated cubes $ A^\iota, A^\sigma$ and $\tilde{A}$ by
\begin{align*}
  A^\iota &= [e,f,g,h,a,b,c,d] \\
A^\sigma &= [-e,-f,-g,-h,a,b,c,d]\\ 
\tilde{A}&=[-a,b,c,-d,e,-f,-g,h].
\end{align*}
As noted in the Introduction, if $A$ is projective, then the class of $\tilde A$ is the inverse of  $[A]$.
The next proposition shows that the binary quadratic forms associated to these new cubes are simply related to $Q_1^A,Q_2^A,Q_3^A.$

\begin{proposition} \label{prop:bqfs_Aiota,Asigma,Atilde}
  Let $A$ be a $2 \times 2 \times 2$ cube. Then, the quadratic forms associated to the cubes $A^\iota, A^\sigma$ and $\tilde{A}$ are given by
\begin{align*}
      Q_1^{A^\iota}(x,y) &= Q_1^A(y,x), \ \ \ Q_2^{A^\iota} = Q_2^A, Q_3^{A^\iota} = Q_3^A,\\
      Q_1^{A^\sigma}(x,y) &= Q_1^A(-y,x), \ Q_2^{A^\sigma} = Q_2^A, Q_3^{A^\sigma} = Q_3^A
\end{align*}
and
\begin{equation*}
    Q_i^{\tilde{A}}(x,y) = Q_i^A(x,-y)
\end{equation*}
for $i=1,2,3$.
\end{proposition}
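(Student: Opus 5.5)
This is a direct computation from the definition \eqref{eq:cubebqf}, $Q_i^A(x,y)=-\det(M_ix-N_iy)$. The plan is to write down the slicing pairs $(M_i,N_i)$ for each of the three new cubes in terms of those of $A$, and then track how the determinant changes.

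For $A^\iota=[e,f,g,h,a,b,c,d]$ the front and back faces are swapped. So $(M_1,N_1)$ becomes $(N_1,M_1)$, and
\[Q_1^{A^\iota}(x,y)=-\det(N_1x-M_1y)=-\det(-(M_1y-N_1x))=-\det(M_1y-N_1x)=Q_1^A(y,x),\]
because $\det(-X)=\det X$ for $2\times 2$ matrices. For $i=2,3$ the swap does not exchange $M_i$ with $N_i$; it interchanges the two rows of each of them. For example, $M_2=\begin{bmatrix}a&e\\b&f\end{bmatrix}$ becomes $\begin{bmatrix}e&a\\f&b\end{bmatrix}$. Reading the slicing more carefully, $M_2$ has columns indexed by the first coordinate, so what actually happens is a column swap applied simultaneously to $M_i$ and $N_i$. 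That is, $M_i\mapsto M_iP$ and $N_i\mapsto N_iP$ with $P=\begin{bmatrix}0&1\\1&0\end{bmatrix}$. Hence the determinant picks up a factor $\det P=-1$. This sign must be checked against the orientation conventions. Computing directly, $Q_2^A=-\det\begin{bmatrix}ax-cy & ex-gy\\ bx-dy& fx-hy\end{bmatrix}$, and swapping the columns negates this. So I must recheck the claim $Q_2^{A^\iota}=Q_2^A$ carefully. This sign bookkeeping is the one place that needs care, and I would verify it by expanding $Q_2$ and $Q_3$ explicitly as quadratic polynomials in $a,\dots,h$ and comparing them term by term with the versions in $e,\dots,d$.

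For $A^\sigma$, the cube is $A^\iota$ with the new front face negated. So $(M_1,N_1)$ becomes $(-N_1,M_1)$, and
\[Q_1^{A^\sigma}(x,y)=-\det(-N_1x-M_1y)=-\det(M_1y+N_1x)=Q_1^A(y,-x).\]
By evenness of the quadratic form this equals $Q_1^A(-y,x)$, as claimed. For $i=2,3$ the operation multiplies $M_i$ and $N_i$ on the right by $\begin{bmatrix}0&1\\-1&0\end{bmatrix}$, which has determinant $+1$, so $Q_2$ and $Q_3$ are unchanged. The same explicit expansion serves as a cross-check for the $\iota$ case.

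For $\tilde A=[-a,b,c,-d,e,-f,-g,h]$, each coordinate sign is $(-1)^{i+j+k+1}$. So $\tilde A$ is obtained from $-A$ by applying $\mathrm{diag}(1,-1)$ in each of the three factors. Concretely, $\tilde M_i=-D M_i D$ and $\tilde N_i=D N_i D$ with $D=\mathrm{diag}(1,-1)$. Then
\[\det(\tilde M_ix-\tilde N_iy)=\det(D)^2\det(-M_ix-N_iy)=\det(M_ix+N_iy),\]
which gives $Q_i^{\tilde A}(x,y)=Q_i^A(x,-y)$ for all $i$. The main obstacle overall is the sign accounting under face permutations for $i=2,3$; the explicit polynomial expansion settles it definitively.
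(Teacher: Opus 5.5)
\textbf{The stated claim for $Q_2^{A^\iota}$ and $Q_3^{A^\iota}$ is false, and your own computation shows it.} Your method, tracking how the slice pairs $(M_i,N_i)$ transform and reading off the determinant factor, is the natural one. The paper states this proposition without any proof and treats it as a direct check, so there is no argument of theirs to compare against. Your treatment of $\tilde A$, of $A^\sigma$, and of $Q_1^{A^\iota}$ is correct.

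The gap is that for $A^\iota$ with $i=2,3$ you find a factor $\det P=-1$ and then defer to a future ``recheck'' instead of concluding. You should trust the computation: in fact $Q_2^{A^\iota}=-Q_2^A$ and $Q_3^{A^\iota}=-Q_3^A$.

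Conceptually, $A^\iota=(P,\mathrm{id},\mathrm{id})\cdot A$ with $P=\begin{pmatrix}0&1\\1&0\end{pmatrix}$. An element $g\in\mathrm{GL}_2$ acting in the first slot sends $(M_2,N_2)\mapsto(M_2g^t,N_2g^t)$ and $(M_3,N_3)\mapsto(gM_3,gN_3)$, so it multiplies $Q_2$ and $Q_3$ by $\det g$. For $A^\sigma$ the relevant matrix is $\begin{pmatrix}0&-1\\1&0\end{pmatrix}\in\mathrm{SL}_2(\Z)$. That is why $Q_2,Q_3$ are unchanged for $A^\sigma$ but flip sign for $A^\iota$.

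A concrete counterexample: $A=A_{\mathrm{id},-4}=[0,1,1,0,1,0,0,-1]$ has $Q_2^A=x^2+y^2$. The slices of $A^\iota=[1,0,0,-1,0,1,1,0]$ give $M_2x-N_2y=\begin{pmatrix}x&-y\\y&x\end{pmatrix}$, so $Q_2^{A^\iota}=-x^2-y^2$. This is negative definite, hence not even $\mathrm{SL}_2(\Z)$-equivalent to $Q_2^A$. No term-by-term expansion can establish $Q_2^{A^\iota}=Q_2^A$. A complete write-up should state and prove the corrected relation $Q_i^{A^\iota}=-Q_i^A$ for $i=2,3$, which still has the same discriminant.

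Two small slips that do not affect your conclusions:
\begin{itemize}
\item Relative to $A$, the sign pattern of $\tilde A$ is $(-1)^{i+j+k}$, not $(-1)^{i+j+k+1}$. The latter is the sign relative to $-A$, which is what your next sentence correctly uses.
\item In $M_3$ and $N_3$ the first index labels rows, not columns. So for $i=3$ the operations are left multiplications: a row swap for $\iota$, and $\begin{pmatrix}0&-1\\1&0\end{pmatrix}$ on the left for $\sigma$. The determinants are still $-1$ and $+1$ respectively.
\end{itemize}
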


\subsection{Correspondence between \texorpdfstring{$2 \times 2 \times 2$}{2 x 2 x 2} cubes and triples of ideals} \label{subsec:cubetriple}

We start by recalling the correspondence between $2 \times 2 \times 2$ cubes and \emph{balanced} triples of oriented ideals as given in \cite{bhargava:2004}. We follow the notation described in Section \ref{sec:Gausscomp}.

Let $D$ be an integer congruent to $\epsilon=0$ or 1 mod 4. A triple $(I_1,I_2,I_3)$ of oriented ideals of $S = S(D)$ is said to be \emph{balanced} if $I_1 I_2 I_3 \subset S$ and $N(I_1)N(I_2)N(I_3)=1.$
Two balanced triples $(I_1,I_2,I_3)$ and $(J_1,J_2,J_3)$ are said to be \textit{equivalent} if there exist $\lambda_1,\lambda_2,\lambda_3 \in K^{\times}$ such that $I_m=\lambda_m J_m$ for $m=1,2,3$. 

By Theorem 11 of \cite{bhargava:2004}, there is a bijection between the nondegenerate $\Gamma$-orbits of $\Z^2 \otimes \Z^2 \otimes \Z^2$ and the set of isomorphism classes of pairs $(S,(I_1,I_2,I_3))$, where $S$ is a nondegenerate oriented quadratic ring and $(I_1,I_2,I_3)$ is an equivalence class of balanced triple of oriented ideals of $S$. Under this bijection, the discriminant of a cube equals the discriminant of the corresponding oriented quadratic ring. The correspondence is obtained as follows. 

Suppose we are given a pair $(S,(I_1,I_2,I_3))$ as described above. Let $\langle 1, \tau \rangle$ be the positively oriented $\Z$-basis of $S$, where $\tau$ satisfies the equation (\ref{eq:taurelation}). Let $\langle \alpha_1, \alpha_2 \rangle, \langle \beta_1, \beta_2 \rangle$ and $\langle \gamma_1, \gamma_2 \rangle$ be $\Z$-bases for $I_1, I_2$ and $I_3$ respectively, which are oriented according to the respective orientation of the ideals. The balanced condition then implies that there exist 16 integers $a_{ijk}$ and $a'_{ijk}$ for $i,j,k \in \lbrace 1,2 \rbrace$ such that
\begin{equation} \label{eq:system_cubetriple} 
\alpha_i \beta_j \gamma_k = a'_{ijk} + a_{ijk} \tau.
\end{equation}
The cube corresponding to $(S,(I_1,I_2,I_3))$ is $A = \lbrace a_{ijk} \rbrace$. To go from a cube $A = \lbrace a_{ijk} \rbrace$ to a pair $(S,(I_1,I_2,I_3))$, we first note that the ring $S$ is determined by its discriminant which equals the discriminant of the cube. Then, we solve the system of equations given by (\ref{eq:system_cubetriple}) for $\alpha_i$'s, $\beta_j$'s and $\gamma_k$'s. Here, the values of $a'_{ijk}$'s are uniquely determined in terms of the coefficients $a_{ijk}$ as given in (\cite{bhargava:2004}). The corresponding triple of oriented ideals is determined uniquely up to equivalence. Indeed, we can choose the triple of ideals with $\Z$-bases given by the following.
\begin{equation} \label{eq:basesofideals}
    \begin{split}
        \alpha_1 &= a'_{111} + a_{111} \tau, \alpha_2 = a'_{211} + a_{211} \tau, \\
        \beta_1 &= a'_{212} + a_{212} \tau, \beta_2 = a'_{222} + a_{222} \tau, \\
        \gamma_1 &= 1/\beta_1, \gamma_2 = 1/\alpha_2.
    \end{split}
\end{equation}
We call a choice of ordered $\Z$-bases for $I_1, I_2, I_3$ \emph{compatible} with the cube $A$ if the equations (\ref{eq:system_cubetriple}) hold.  Henceforth whenever we introduce a cube with associated pair $(S, (I_1,I_2, I_3))$ we always assume the bases chosen for the $I_j$ are compatible in this sense.

The $S$-module structure of the ideals can be described in terms of the cube:

\begin{proposition}[\cite{bhargava:2004}, Eq. (20)]\label{prop:bhargava20}
  The $S$-module structure of $I_1$ is given by
\begin{equation}\label{eq:SmoduleI1}
    \begin{split}
        \tau \alpha_1 &= \frac{q_1+\epsilon}{2} \alpha_1 + p_1 \alpha_2, \\
        \tau \alpha_2 &= -r_1 \alpha_1 + \frac{\epsilon-q_1}{2} \alpha_2,
    \end{split}
\end{equation}
where $Q_1^A(x,y) = p_1x^2 + q_1 xy + r_1 y^2$, and $\epsilon \in \lbrace 0,1 \rbrace$ is congruent to $D$ (mod $4$).  The $S$-module structures of $I_2$ and $I_3$ are similarly given in terms of $Q_2^A$ and $Q_3^A$.
\end{proposition}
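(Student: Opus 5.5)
The plan is to first identify $Q_1^A$ with the norm form of $I_1$ in the basis $\langle\alpha_1,\alpha_2\rangle$, and then read off the matrix of multiplication by $\tau$ from that norm form, exactly as in the correspondence of Theorem~\ref{thm:bqfidealcorrespondence}.

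\textbf{Step 1 (a priori shape of the $\tau$-action).} Since $I_1$ is an $S$-module, there is an integer matrix $(m_{il})$ with $\tau\alpha_i=\sum_l m_{il}\alpha_l$. This matrix represents multiplication by $\tau$ on a rank-2 lattice, and $\tau\notin\Q$. Hence its characteristic polynomial is $t^2-\epsilon t-\tfrac{D-\epsilon}{4}$ by \eqref{eq:taurelation}, so its trace is $\epsilon$ and its determinant is $-\tfrac{D-\epsilon}{4}$. Writing $m_{11}=\tfrac{q+\epsilon}{2}$ and $m_{22}=\tfrac{\epsilon-q}{2}$ for some integer $q$, the claim reduces to showing $q=q_1$, $m_{12}=p_1$ and $m_{21}=-r_1$.

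\textbf{Step 2 (the norm form of $I_1$ is $Q_1^A$).} Fix $\xi=x\alpha_1+y\alpha_2$. Consider the bilinear form on $I_2\times I_3$ given by $(\eta,\zeta)\mapsto$ the $\tau$-coefficient of $\xi\eta\zeta$. This coefficient equals $(w-\bar w)/\sqrt D$ for $w=\xi\eta\zeta$, so the form is $(\xi\eta\zeta-\bar\xi\bar\eta\bar\zeta)/\sqrt D$. By \eqref{eq:system_cubetriple}, its Gram matrix in the bases $\langle\beta_j\rangle,\langle\gamma_k\rangle$ is $xM_1+yN_1$.

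Next compute its determinant by writing the bases of $I_2,I_3$ through the two embeddings $w\mapsto w,\bar w$. The Gram matrix factors as $P_2^{t}\,\mathrm{diag}(\xi,-\bar\xi)\,P_3/\sqrt D$, where $P_m$ is the $2\times 2$ conjugate matrix of the basis of $I_m$. Since $\det P_m=\sqrt D\,N(I_m)$ with the orientation conventions, this gives
\[
\det(xM_1+yN_1)=-N(\xi)\,N(I_2)N(I_3).
\]
Using balancedness, $N(I_1)N(I_2)N(I_3)=1$, this becomes $-N(\xi)/N(I_1)$. Comparing with \eqref{eq:cubebqf}, this yields $N(x\alpha_1+y\alpha_2)/N(I_1)=Q_1^A(x,\pm y)$.

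\textbf{Step 3 (reading off the module structure).} Write $\alpha_2=\theta\alpha_1$. Then $N(x\alpha_1+y\alpha_2)/N(I_1)$ is proportional to $(x+y\theta)(x+y\bar\theta)$, so $\theta$ and $\bar\theta$ are roots of $r_1t^2\mp q_1 t+p_1$. On the other hand, Step 1 gives $\tau\alpha_1=m_{11}\alpha_1+m_{12}\theta\alpha_1$, so $\tau=m_{11}+m_{12}\theta$. Substituting $\theta=(\tau-m_{11})/m_{12}$ into the quadratic satisfied by $\theta$ and comparing with \eqref{eq:taurelation} pins down $m_{12}=p_1$ and $q=q_1$. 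The determinant condition from Step 1 then forces $m_{21}=-r_1$, giving \eqref{eq:SmoduleI1}. The statements for $I_2$ and $I_3$ follow by the same argument with the roles of the slicings permuted; the ambient pairing is symmetric in the three factors, so $Q_2^A$ and $Q_3^A$ appear in the same way.

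\textbf{Main obstacle.} The delicate part is the sign bookkeeping in Steps 2--3: the orientation of the bases, the sign in $\det P_m=\pm\sqrt D\,N(I_m)$, the minus sign in \eqref{eq:cubebqf}, and whether $y$ or $-y$ appears. A sign error here would produce $-r_1$ versus $r_1$, or $\tfrac{q_1+\epsilon}{2}$ versus $\tfrac{\epsilon-q_1}{2}$. I would fix all of these by checking the identity cube $A_{\mathrm{id},D}$ with the explicit bases \eqref{eq:basesofideals}, where every quantity is computable. I would then argue by $\Gamma$-equivariance, or simply by continuity of the integer-valued signs, that the same signs hold in general.
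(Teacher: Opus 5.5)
Your Step~2 is correct, and it needs no $\pm$. The factorization gives $\det(xM_1+yN_1)=-N(x\alpha_1+y\alpha_2)/N(I_1)$, so $N(x\alpha_1+y\alpha_2)/N(I_1)=Q_1^A(x,-y)$ for any compatible bases. This is Proposition~\ref{prop:normQ}. The paper gives no proof of this statement: it cites Bhargava's Eq.~(20), which rests on his explicit solution for the $a'_{ijk}$, and then deduces Proposition~\ref{prop:normQ} from it. Your plan reverses that order, which is a reasonable route.

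The gap is in Step~3. Everything you use there is also satisfied by the conjugate pair $\bar\alpha_1,\bar\alpha_2$: the norm form, with $N(I_1)$ treated only as a scalar, and the minimal polynomial of $\tau$. But that pair's $\tau$-matrix is $\begin{pmatrix}\frac{\epsilon-q_1}{2}&-p_1\\ r_1&\frac{\epsilon+q_1}{2}\end{pmatrix}$.

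Concretely, $\theta=\alpha_2/\alpha_1$ and $\bar\theta$ are the roots of $p_1t^2+q_1t+r_1$. The polynomial $r_1t^2\mp q_1t+p_1$ you wrote has the reciprocal roots $\mp\theta^{-1},\mp\bar\theta^{-1}$. Substituting $\theta=(\tau-m_{11})/m_{12}$ and comparing with \eqref{eq:taurelation} yields only $m_{12}^2=p_1^2$. That is, $m_{12}=\pm p_1$ and $m_{11}=\frac{\epsilon\pm q_1}{2}$, with matching signs.

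So this is not a sign-bookkeeping issue. The missing information is the orientation of $I_1$, which your argument never uses. Your fallback does not supply it. Transporting a check on $A_{\mathrm{id},D}$ by $\Gamma$ only reaches the orbit of $A_{\mathrm{id},D}$. There is also no continuous family of integral cubes along which an integer sign is evidently constant.

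The needed input is the oriented-norm identity. Positivity of $\langle 1,\tau\rangle$ means $\tau-\bar\tau=\sqrt D$, and expanding the $\alpha_i$ in that basis gives $\bar\alpha_1\alpha_2-\alpha_1\bar\alpha_2=\sqrt D\,N(I_1)$. In the convention of your factorization this reads $\det P_1=\alpha_1\bar\alpha_2-\alpha_2\bar\alpha_1=-\sqrt D\,N(I_1)$, so your stated sign for $\det P_m$ is off. That sign is invisible in Step~2, where only $\det P_2\det P_3$ occurs, but it is exactly what decides Step~3.

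With this identity the argument closes:
\begin{itemize}
\item Multiply $\tau\alpha_1=m_{11}\alpha_1+m_{12}\alpha_2$ by $\bar\alpha_1$ and subtract the conjugate equation. This gives $\sqrt D\,N(\alpha_1)=m_{12}\sqrt D\,N(I_1)$, hence $m_{12}=Q_1^A(1,0)=p_1$.
\item Do the same with $\bar\alpha_2$ (applied to the first equation, and to $\tau\alpha_2=m_{21}\alpha_1+m_{22}\alpha_2$), using $\mathrm{tr}(\alpha_1\bar\alpha_2)=-q_1N(I_1)$ from Step~2. This gives $m_{11}=\frac{q_1+\epsilon}{2}$ and $m_{21}=-N(\alpha_2)/N(I_1)=-r_1$.
\end{itemize}

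This version also avoids dividing by $\alpha_1$ and by $m_{12}$, which your Step~3 requires. That division fails for nonzero square $D$: there $K\cong\Q\times\Q$, and $\alpha_1$ is a zero divisor exactly when $p_1=0$.
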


Under the correspondence between cubes and balanced triples of ideals, the projective orbits of $2 \times 2 \times 2$ cubes of discriminant $D$ correspond to balanced triples of projective ideals of $S(D)$. Also, the set of equivalence classes of balanced triples of projective oriented ideals has a natural structure of an abelian group under component-wise multiplication i.e. the product of the balanced triples $(I_1,I_2,I_3)$ and $(J_1,J_2,J_3)$ is $(I_1J_1,I_2J_2,I_3J_3)$. Thus, the set of projective orbits of $2 \times 2 \times 2$ cubes of discriminant $D$ gets an abelian group structure, which is denoted as $\text{Cl}(\Z^2 \otimes \Z^2 \otimes \Z^2; D)$. We also have the following group isomorphism:
$$ \mathrm{Cl}(\Z^2 \otimes \Z^2 \otimes \Z^2;D) \cong \mathrm{Cl}^+(S(D)) \times \mathrm{Cl}^+(S(D)).$$
This is obtained by sending a projective balanced triple $(I_1,I_2,I_3)$ to $(I_1,I_2)$, because the third ideal is determined from the first two. 

In the next three propositions we continue to let $A=(a_{ijk})$ be a cube corresponding to the balanced triple of oriented ideals $(I_1, I_2, I_3).$  We fix compatible bases $\langle \alpha_1, \alpha_2 \rangle, \langle \beta_1, \beta_2 \rangle$ and $\langle \gamma_1, \gamma_2 \rangle$ for the three ideals, respectively.

Recall that the companion cube $A'$ of a cube $A$ was defined in the introduction to be
\begin{equation}
  \label{def:companion_cube}
A'(\x,\y,\z)=A(\x'-\epsilon \x,\y,\z)\mbox{ for } \x,\y,\z\in\Z^2,
\end{equation}
for $$\x'=
\begin{pmatrix}
  \tfrac {q+\epsilon}2& -r\\ p & \tfrac{-q+\epsilon}2
\end{pmatrix}\x.
$$
The next proposition identifies the coefficients of the companion cube with the $a^{'}_{ijk}$ defined in (\ref{eq:system_cubetriple}).  In \cite{bhargava:2004} by comparison, Bhargava shows that the coefficients $a'_{ijk}$ (which he denotes by $c_{ijk}$ in his Eq. (15)) are determined by the cube $A$ by giving the unique explicit solution to the system of 18 linear and quadratic equations arising from the associativity and commutativity of multiplication in the ring $S$.

\begin{proposition} \label{prop:companioncube}
Let $A$ be a $2 \times 2 \times 2$ cube.
Then, the companion cube $A'$ has coefficients equal to $a'_{ijk}$.
\end{proposition}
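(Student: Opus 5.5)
Here is the plan. We want $a'_{ijk}$, the rational part of $\alpha_i\beta_j\gamma_k$ in the basis $\langle 1,\tau\rangle$. The idea is to get it by multiplying the defining relation (\ref{eq:system_cubetriple}) by $\tau$ and using the $S$-module structure of $I_1$ from Proposition \ref{prop:bhargava20}.

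First compute $\tau\cdot\alpha_i\beta_j\gamma_k$ in two ways. On one hand, by (\ref{eq:system_cubetriple}) and (\ref{eq:taurelation}),
$$\tau(a'_{ijk}+a_{ijk}\tau)= a_{ijk}\tfrac{D-\epsilon}{4} + (a'_{ijk}+\epsilon a_{ijk})\tau .$$
On the other hand, Proposition \ref{prop:bhargava20} gives $\tau\alpha_i=\sum_l m_{li}\alpha_l$, where
$$m=\begin{pmatrix}\tfrac{q+\epsilon}{2}& -r\\ p&\tfrac{\epsilon-q}{2}\end{pmatrix}$$
is exactly the matrix defining $\x\mapsto\x'$, and $Q_1^A=px^2+qxy+ry^2$. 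Therefore
$$\tau\alpha_i\beta_j\gamma_k=\sum_l m_{li}\alpha_l\beta_j\gamma_k=\sum_l m_{li}(a'_{ljk}+a_{ljk}\tau).$$
Comparing $\tau$-coefficients gives
$$a'_{ijk}+\epsilon a_{ijk}=\sum_l m_{li}a_{ljk}.$$

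Next, read this as a statement about trilinear forms. Set $A^{\flat}(\x,\y,\z)=\sum a'_{ijk}x_iy_jz_k$. Then
$$\sum_{i,j,k} \Bigl(\sum_l m_{li}a_{ljk}\Bigr)x_iy_jz_k = A(m\x,\y,\z)=A(\x',\y,\z),$$
so $A^\flat(\x,\y,\z)=A(\x',\y,\z)-\epsilon A(\x,\y,\z)=A(\x'-\epsilon\x,\y,\z)=A'(\x,\y,\z)$ by linearity in the first slot, which is (\ref{def:companion_cube}). The one thing to check here is the index convention: which of $m$ or $m^t$ acts. This must match how (\ref{eq:SmoduleI1}) is written, namely $\tau\alpha_1=m_{11}\alpha_1+m_{21}\alpha_2$, which corresponds to the columns of $m$. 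That is consistent with $\x'=m\x$, since $(m\x)_l=\sum_i m_{li}x_i$ contracts against $a_{ljk}$.

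The main obstacle is that this argument presupposes the bases are compatible, i.e.\ that $A$ actually arises from a balanced triple with $S$-module structure given by Proposition \ref{prop:bhargava20}. For nondegenerate $A$ this is Bhargava's Theorem 11 together with Eq.~(20), which we may assume. For an arbitrary (possibly degenerate) cube, where $a'_{ijk}$ means Bhargava's explicit polynomial $c_{ijk}$ of Eq.~(15), I would extend by a density argument. Both sides are polynomial in the eight entries: $A'$ is visibly polynomial, with $\tfrac{q\pm\epsilon}{2}$ integral because $q\equiv D\equiv\epsilon \pmod 2$. They agree on the Zariski-dense set of nondegenerate cubes, hence agree identically. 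As a fallback, one could directly compare $A'$ with the formulas (15) of \cite{bhargava:2004} coefficient by coefficient, for example checking $a'_{111}$ and invoking symmetry under the $\Gamma$-action.
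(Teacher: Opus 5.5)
Your proof is correct and is essentially the paper's argument: multiply the relation $\alpha_i\beta_j\gamma_k=a'_{ijk}+a_{ijk}\tau$ by $\tau$, rewrite $\tau\alpha_i$ using Proposition \ref{prop:bhargava20}, reduce $\tau^2$ via (\ref{eq:taurelation}), and compare $\tau$-coefficients. The paper does this directly for the trilinear form rather than entry by entry, your index check that $\x'=m\x$ is right, and your density remark for degenerate cubes goes beyond the paper, which tacitly treats only cubes arising from balanced triples; if you keep that remark, fix $\epsilon$ by the parity of $q$ first, since $\epsilon$ is not a polynomial in the entries.
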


\begin{proof}
By (\ref{eq:system_cubetriple}), for $\x = (x_1,x_2)^t, \y = (y_1,y_2)^t, \z = (z_1,z_2)^t \in \Z^2$, the cube $A$ defines a trilinear form $C$ satisfying
\begin{equation}\label{eq:trilinear1}
    (x_1 \alpha_1 + x_2 \alpha_2) (y_1 \beta_1 + y_2 \beta_2) (z_1 \gamma_1 + z_2 \gamma_2) = A(\x,\y,\z) \tau + C(\x,\y,\z).
\end{equation}
Let $a'_{ijk}$ be the coefficients of the cube $C.$  Our goal is to show that $A'=C.$

Multiplying both sides of equation (\ref{eq:trilinear1}) by $\tau$,
\begin{equation*}
    \tau (x_1 \alpha_1 + x_2 \alpha_2) (y_1 \beta_1 + y_2 \beta_2) (z_1 \gamma_1 + z_2 \gamma_2) = A(\x,\y,\z)\tau^2  + C(\x,\y,\z) \tau.
\end{equation*} 
Set
\begin{equation} \label{eq:xprime}
    \x'=\begin{pmatrix}
        x_1' \\ x_2'
    \end{pmatrix} = \begin{pmatrix}
     \frac{q_1 + \epsilon}{2} & -r_1 \\ p_1 &  \frac{-q_1 + \epsilon}{2}
    \end{pmatrix} \x,
\end{equation}
so that  $\tau (x_1 \alpha_1 + x_2 \alpha_2) = (x'_1 \alpha_1 + x'_2 \alpha_2)$ by Proposition \ref{prop:bhargava20}.
Consequently
\begin{equation*}
  A(\x,\y,\z) \tau^2  + C(\x,\y,\z) \tau=A(\x',\y,\z) \tau + C(\x',\y,\z).
\end{equation*}
Replace $\tau^2$ by $\epsilon\tau+\left(\frac{D - \epsilon}{4} \right)$ and compare coefficients of $\tau$ on both sides.  This yields
\begin{equation*}
\begin{split}
    C(\x,\y,\z) & = A(\x',\y,\z) - \epsilon A(\x,\y,\z)\\
        & = A(\x' - \epsilon \x,\y,\z) \\
        & = A'(\x,\y,\z).
\end{split}
\end{equation*}
This proves that the coefficients of the companion cube $A'$ are equal to $a_{ijk}'$.
\end{proof}

Define the matrix
\begin{equation*}
  \label{eq:LA}
L_i^A=
\begin{pmatrix}
\frac{q_i - \epsilon}{2} & -r_i \\ p_i &  \frac{-q_i - \epsilon}{2}
\end{pmatrix},
\end{equation*}
where $Q_i^A(x,y)=p_ix^2+q_ixy+r_iy^2$.  Arguing as in the previous proof, we see that
\begin{equation}
  \label{eq:companionL}
A'(\x,\y,\z)=A(L_1^A\x, \y, \z)=A(\x, L_2^A\y, \z), A(\x,\y,L_3^A\z).
\end{equation}

The next proposition gives the relation between the quadratic forms associated to a cube $A$ and a triple of ideals corresponding to it.

\begin{proposition}\label{prop:normQ}
With the choice of the $\Z$-bases of the ideals $I_1,I_2,I_3$ as given in equation (\ref{eq:basesofideals}), we have
\begin{equation} \label{eq:bqfnormform}
\begin{split}
    N(x \alpha_1 - y \alpha_2)/N(I_1) = Q_1^A(x,y), \\
    N(x \beta_1 - y \beta_2)/N(I_2) = Q_2^A(x,y), \\
    N(x \gamma_1 - y \gamma_2)/N(I_3) = Q_3^A(x,y). 
\end{split}
\end{equation}
\end{proposition}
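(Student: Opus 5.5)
The plan is to prove the first identity in \eqref{eq:bqfnormform} by a direct norm computation that uses only the $S$-module structure of $I_1$ from Proposition \ref{prop:bhargava20}. The identities for $I_2$ and $I_3$ then follow by the same argument, with $(Q_2^A,\beta_1,\beta_2)$ and $(Q_3^A,\gamma_1,\gamma_2)$ in place of $(Q_1^A,\alpha_1,\alpha_2)$. Write $Q_1^A=p x^2+qxy+ry^2$ and assume first that $p\neq 0$.

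\textbf{Step 1: express $\alpha_2$ through $\alpha_1$.} The first line of \eqref{eq:SmoduleI1} gives $p\,\alpha_2=\bigl(\tau-\tfrac{q+\epsilon}{2}\bigr)\alpha_1$. Hence
\begin{equation*}
x\alpha_1-y\alpha_2=\frac{\alpha_1}{p}\Bigl(px+\tfrac{q+\epsilon}{2}\,y-y\tau\Bigr).
\end{equation*}

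\textbf{Step 2: compute the norm.} By \eqref{eq:taurelation}, $N(u+v\tau)=u^2+\epsilon uv-\tfrac{D-\epsilon}{4}v^2$. Take $u=px+\tfrac{q+\epsilon}{2}y$ and $v=-y$. Expanding and using $\epsilon^2=\epsilon$, the coefficient of $y^2$ becomes $(q^2-D)/4=pr$. Therefore
\begin{equation*}
N(x\alpha_1-y\alpha_2)=\frac{N(\alpha_1)}{p^2}\bigl(p^2x^2+pqxy+pry^2\bigr)=\frac{N(\alpha_1)}{p}\,Q_1^A(x,y).
\end{equation*}

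\textbf{Step 3: show $N(I_1)=N(\alpha_1)/p$.} Write $\alpha_1=s+t\tau$ with $s=a'_{111}$ and $t=a_{111}$. Then $p\alpha_2=(\tau-\tfrac{q+\epsilon}{2})(s+t\tau)$, and its $(1,\tau)$-coordinates follow from $\tau^2=\epsilon\tau+\tfrac{D-\epsilon}{4}$. The oriented determinant \eqref{eq:ideal_norm} of $\langle\alpha_1,\alpha_2\rangle$ is then $\tfrac1p$ times a $2\times 2$ determinant in $s,t,q,\epsilon,D$. The $q$-terms cancel, since they contribute a multiple of the row $(s,t)$. What remains is $s^2+\epsilon st-\tfrac{D-\epsilon}{4}t^2=N(\alpha_1)$. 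Combining this with Step 2 gives the claim.

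\textbf{The case $p=0$.} If $p=0$, I would run the same argument using the second line of \eqref{eq:SmoduleI1} to express $\alpha_1$ through $\alpha_2$; this needs $r\neq 0$. If $p=r=0$, then $D=q^2$, and I would instead argue that both sides are polynomial identities in the cube entries (via Proposition \ref{prop:companioncube} and the explicit bases \eqref{eq:basesofideals}), so the identity extends from the Zariski-dense set where $p\neq 0$.

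\textbf{Main obstacle.} I expect the hardest part to be the signs in Step 3. The orientation of the basis $\langle\alpha_1,\alpha_2\rangle$ must match the orientation of $I_1$, so that the determinant comes out as $+N(\alpha_1)/p$ rather than its negative. The minus sign in $x\alpha_1-y\alpha_2$ and the convention $Q_i^A=-\det(M_ix-N_iy)$ have to be tracked consistently through this computation. I would check the sign first on the identity cube $A_{\text{id},D}$, and then rely on the computation being uniform in the entries of $A$.
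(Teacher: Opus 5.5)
Your proof is correct, but it is organized differently from the paper's. The paper's one-line proof combines Proposition~\ref{prop:bhargava20} with Bhargava's explicit formulas for the $a'_{ijk}$. In other words, it expands everything in the coordinates of the particular bases \eqref{eq:basesofideals}. You never use the $a'_{ijk}$. Eliminating $\alpha_2$ through $p\alpha_2=(\tau-\tfrac{q+\epsilon}{2})\alpha_1$ turns both $N(x\alpha_1-y\alpha_2)$ and $N(I_1)$ into multiples of $N(\alpha_1)$, and $q^2-D=4pr$ does the rest.

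What this buys is that your argument holds verbatim for \emph{any} compatible oriented basis. That is how the proposition is actually used later: in Proposition~\ref{lemma:cubebasesmultiplication} and in the proof of Theorem~\ref{thm:cubecomp}, the bases are arbitrary compatible ones, not those of \eqref{eq:basesofideals}. A coordinate expansion in \eqref{eq:basesofideals} covers only that basis unless one adds a rescaling remark. The price is a case split on $p$, which a polynomial expansion avoids.

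Two minor points. First, the ``main obstacle'' you flag is not one. $N(I_1)$ is by definition \eqref{eq:ideal_norm} the coordinate determinant of the oriented basis, and your Step~3 computes exactly that. Done as the coordinate determinant of $\langle \alpha_1, p\alpha_2\rangle$, it gives $pN(I_1)=N(\alpha_1)$ with no undetermined sign. The sign conventions for $Q_1^A$ are already built into Proposition~\ref{prop:bhargava20}, so no check on $A_{\text{id},D}$ is needed.

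Second, since $pN(I_1)=N(\alpha_1)$ holds for every $p$, the case $p=0$ forces $N(\alpha_1)=0$ and so only occurs for square $D$. For the bases \eqref{eq:basesofideals}, the case $p_1=r_1=0$ cannot occur at all. Indeed, $r_1=0$ gives $(\tau-\tfrac{\epsilon-q_1}{2})\alpha_2=0$, which makes $\alpha_2$ a zero divisor, so $\gamma_2=1/\alpha_2$ would not exist. Similarly, invertibility of $\beta_1$ and of $\gamma_1=1/\beta_1$ forces $p_2\neq 0$ and $p_3\neq 0$. So your Zariski-density fallback is harmless but unnecessary.
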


\begin{proof}
This is a straightforward computation using Proposition \ref{prop:bhargava20} and  the formula for the $a'_{ijk}$'s given on page 235 of \cite{bhargava:2004}.
\end{proof}

Recall that $\alpha_i\beta_j\gamma_k= a_{ijk} \tau + a'_{ijk}.$  Our final result of this subsection gives a formula for the products $\beta_j\gamma_k.$

\begin{proposition} \label{lemma:cubebasesmultiplication}
For $j,k \in \{1,2\}$,
    $$ N(I_1) \beta_j \gamma_k = (a_{2jk}) \overline{\alpha_1} + (a_{1jk}) (-\overline{\alpha_2}). $$
Equivalently, we have
$$N(I_1)(y_1\beta_1+y_2\beta_2)(z_1\gamma_1+z_2\gamma_2)=
A(\underline{e}_2, \y,\z)\overline{\alpha_1} + A(\underline{e}_1, \y,\z) (-\overline{\alpha_2})
$$
for $\y=(y_1,y_2)^t, \z=(z_1,z_2)^t\in \Z^2.$
\end{proposition}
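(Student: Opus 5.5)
The plan is to derive the formula from the defining relations \eqref{eq:system_cubetriple}, using the balanced condition to express $\beta_j\gamma_k$ in terms of $\alpha$'s. Since $(I_1,I_2,I_3)$ is balanced, $N(I_1)N(I_2)N(I_3)=1$. For an invertible (or at least nondegenerate) oriented ideal, $I_1\overline{I_1}=N(I_1)S$. Combined with $I_1I_2I_3\subset S$, this suggests that $N(I_1)\beta_j\gamma_k$ should lie in $\overline{I_1}$. Rather than rely on invertibility, however, I would argue directly in $K$. The element $\xi=N(I_1)\beta_j\gamma_k$ is determined by its products with $\alpha_1$ and $\alpha_2$: since $\alpha_1,\alpha_2$ span $K$ over $\Q$, the map $\xi\mapsto(\alpha_1\xi,\alpha_2\xi)$ is injective.

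So I will check that the proposed right-hand side $\eta=a_{2jk}\overline{\alpha_1}-a_{1jk}\overline{\alpha_2}$ satisfies
\[
\alpha_i\eta=N(I_1)\,\alpha_i\beta_j\gamma_k=N(I_1)\,(a'_{ijk}+a_{ijk}\tau)\qquad (i=1,2).
\]
Compute $\alpha_1\eta=a_{2jk}N(\alpha_1)-a_{1jk}\alpha_1\overline{\alpha_2}$ and $\alpha_2\eta=a_{2jk}\alpha_2\overline{\alpha_1}-a_{1jk}N(\alpha_2)$. The key identity is that for a $\Z$-basis $\langle\alpha_1,\alpha_2\rangle$ of $I_1$ one has
\[
\alpha_1\overline{\alpha_2}-\overline{\alpha_1}\alpha_2=\pm N(I_1)(\tau-\overline{\tau}),
\]
where the sign is fixed by the orientation convention \eqref{eq:ideal_norm}. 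This follows by writing $\alpha_i=u_i+v_i\tau$ and computing the determinant. Together with the $S$-module structure in Proposition \ref{prop:bhargava20}, it lets me rewrite $\alpha_1\overline{\alpha_2}$ and $N(\alpha_i)$ in terms of $N(I_1)$, $p_1,q_1,r_1$ and $\tau$. Indeed, Proposition \ref{prop:normQ} gives $N(\alpha_1)=N(I_1)p_1$ and $N(\alpha_2)=N(I_1)r_1$, and polarizing gives $\mathrm{tr}(\alpha_1\overline{\alpha_2})=-N(I_1)q_1$. So, up to the sign fixed by orientation,
\[
\alpha_1\overline{\alpha_2}=N(I_1)\left(\tfrac{-q_1-\epsilon}{2}+\tau\right).
\]
Substituting, $\alpha_i\eta/N(I_1)$ becomes an explicit element $c+d\tau$ with $d=a_{ijk}$ (for $i=1$, say, $d=a_{1jk}$ up to sign conventions). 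The constant part $c$ should match $a'_{ijk}$ via the companion cube formula $A'(\x,\y,\z)=A(\x'-\epsilon\x,\y,\z)$ of Proposition \ref{prop:companioncube}: for $i=1$ it reads $a'_{1jk}=\frac{q_1-\epsilon}{2}a_{1jk}+p_1a_{2jk}$, which is exactly the expression produced.

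The bilinear restatement then follows by taking $\Z$-linear combinations over $j,k$, since $A(\underline{e}_i,\y,\z)=\sum_{j,k}a_{ijk}y_jz_k$.

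The main obstacle is bookkeeping the signs: the orientation of $\langle\alpha_1,\alpha_2\rangle$, the sign of $N(I_1)$ in \eqref{eq:ideal_norm}, and the normalization $\tau-\overline\tau=\sqrt D$. I would fix these once by testing on the identity cube $A_{\mathrm{id},D}$ with the explicit bases \eqref{eq:basesofideals}, and then carry the general computation through.
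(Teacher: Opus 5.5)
Your plan works once one sign is repaired, but it runs in the opposite direction from the paper's proof.

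\textbf{The paper's route.} The paper multiplies $\alpha_i\beta_j\gamma_k=a'_{ijk}+a_{ijk}\tau$ by $\overline{\alpha_1}$ (resp.\ $\overline{\alpha_2}$). It rewrites $\tau\overline{\alpha_i}$ using the conjugated module structure \eqref{eq:SmoduleI1}. This gives expansions of $p_1N(I_1)\beta_j\gamma_k$ and $r_1N(I_1)\beta_j\gamma_k$ in the basis $\overline{\alpha_1},-\overline{\alpha_2}$. It then reads the $-\overline{\alpha_2}$-coordinate off the first expansion and the $\overline{\alpha_1}$-coordinate off the second.

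\textbf{How the two routes compare.} The paper never needs the $a'_{ijk}$ explicitly, since the coordinates containing them are discarded. However, it divides by $p_1$ and $r_1$. That is harmless for non-square $D$, but it yields nothing when $p_1r_1=0$, which can happen for square $D$.

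You instead verify a guessed answer by pairing it against $\alpha_1,\alpha_2$. Injectivity of $\xi\mapsto(\alpha_1\xi,\alpha_2\xi)$ holds in any quadratic algebra, so you never divide by $p_1$ or $r_1$. The cost is that you need the exact value of $\alpha_1\overline{\alpha_2}$. You also need both rows of the companion formula \eqref{eq:companionL} to match constant terms: $a'_{1jk}=\tfrac{q_1-\epsilon}{2}a_{1jk}+p_1a_{2jk}$ and $a'_{2jk}=-r_1a_{1jk}-\tfrac{q_1+\epsilon}{2}a_{2jk}$.

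\textbf{The sign.} The sign you left open is fixed by the computation you describe. Writing $\alpha_i=u_i+v_i\tau$ with the basis oriented so that $N(I_1)=u_1v_2-u_2v_1$, one gets $\alpha_1\overline{\alpha_2}-\overline{\alpha_1}\alpha_2=(u_1v_2-u_2v_1)(\overline{\tau}-\tau)=-N(I_1)(\tau-\overline{\tau})$. Hence
\[
\alpha_1\overline{\alpha_2}=N(I_1)\Bigl(\tfrac{\epsilon-q_1}{2}-\tau\Bigr),
\]
which is the conjugate of what you displayed. Your identity-cube test ($\alpha_1=1$, $\alpha_2=\tau$, $N(I_1)=1$) gives $\alpha_1\overline{\alpha_2}=-\tau$ and would catch this.

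With your displayed version, $\alpha_1\eta/N(I_1)$ would be $p_1a_{2jk}+\tfrac{q_1+\epsilon}{2}a_{1jk}-a_{1jk}\tau$, which matches neither coefficient. With the corrected version it is exactly $a'_{1jk}+a_{1jk}\tau$. Likewise $\alpha_2\eta/N(I_1)=-r_1a_{1jk}-\tfrac{q_1+\epsilon}{2}a_{2jk}+a_{2jk}\tau=a'_{2jk}+a_{2jk}\tau$, which completes your argument.
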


\begin{proof}
Multiplying Eq. \eqref{eq:system_cubetriple} by $\overline{\alpha_1}$ on both sides gives 
$$ N(\alpha_1) \beta_j \gamma_k = a'_{1jk} \overline{\alpha_1} + a_{1jk} \tau \overline{\alpha_1}.$$
Take conjugates on both sides of Eq. \eqref{eq:SmoduleI1} and simplify to get an expression for $\tau \overline{\alpha_1}$. Substituting that expression in the equation above, we get 
\begin{equation*}
\begin{split}
N(\alpha_1) \beta_j \gamma_k & = a'_{1jk} \overline{\alpha_1} + a_{1jk} \left( - \left( \frac{q_1 - \epsilon}{2} \right) \overline{\alpha_1} - p_1 \overline{\alpha_2} \right) \\
& = \left( a'_{1jk} - a_{1jk} \left( \frac{q_1 - \epsilon}{2} \right) \right) \overline{\alpha_1} + (a_{1jk} p_1) (-\overline{\alpha_2}).
\end{split}
\end{equation*}
Also, $\frac{N(\alpha_1)}{N(I_1)} = p_1$ by Prop. \ref{prop:normQ}. So we obtain
\begin{equation*}
p_1 N(I_1) \beta_j \gamma_k = \left( a'_{1jk} - a_{1jk} \left( \frac{q_1 - \epsilon}{2} \right) \right) \overline{\alpha_1} + (a_{1jk} p_1) (-\overline{\alpha_2}).
\end{equation*}
Similarly, multiplying Eq. \eqref{eq:system_cubetriple} by $\overline{\alpha_2}$ on both sides and arguing as above, we get
\begin{equation*}
r_1 N(I_1) \beta_j \gamma_k = (a_{2jk}r_1) \overline{\alpha_1} + \left( -a'_{2jk} - a_{2jk} \left( \frac{q_1 + \epsilon}{2} \right) \right)(- \overline{\alpha_2}).
\end{equation*}
Finally, comparing the last two equations yields
$$ N(I_1) \beta_j \gamma_k = (a_{2jk}) \overline{\alpha_1} + (a_{1jk}) (-\overline{\alpha_2}). $$

\end{proof}

\subsection{Composition identities for \texorpdfstring{$2 \times 2 \times 2$}{2 x 2 x 2} cubes} \label{subsec:compidcubes}

We restate our main result on composition identities for $2 \times 2 \times 2$ cubes from the introduction.

\cubecomp

\subsection{Proof of Theorem \ref{thm:cubecomp}, forward direction}
\label{sec:proof_cubes_forward}
\begin{proof}

Assume that $$ [A] + [B] + [C] = [\id] .$$ We choose the balanced triples $(I_1,I_2,I_3),(J_1,J_2,J_3),(K_1,K_2,K_3)$ corresponding to $A,B$ and $C$ so that $I_mJ_mK_m= S$ for $m=1,2,3$. Indeed, since the triples $(I_1J_1K_1,I_2J_2K_2,I_3J_3K_3)$ and $(S,S,S)$ are equivalent, we can rescale to assume that $I_mJ_mK_m= S$ for each $m.$

We choose compatible bases
\begin{itemize}
  \item $\langle \alpha_1, \alpha_2 \rangle, \langle \beta_1, \beta_2 \rangle,\langle \gamma_1, \gamma_2 \rangle$ for $I_1,I_2,I_3$
  \item $\langle \omega_1,\omega_2 \rangle$, $\langle \sigma_1,\sigma_2 \rangle$, $\langle \theta_1,\theta_2 \rangle$ for $J_1, J_2, J_3$
  \item $\langle \pi_1, \pi_2 \rangle, \langle \chi_1, \chi_2 \rangle, \langle \psi_1, \psi_2 \rangle$ for $K_1,K_2, K_3.$
\end{itemize}
Let $R = (r_{ijk}),S = (s_{ijk})$ and $T = (t_{ijk})$ be cubes corresponding to the balanced triples $(I_1,J_1,K_1), (I_2,J_2,K_2)$ and $(I_3,J_3,K_3)$, with the same choices for bases of the ideals. It follows from Proposition \ref{prop:normQ} that the cubes $(R,S,T)$ are dual to the cubes $(A,B,C)$.
We note that the cube $\tilde A$
defined in \eqref{def:Atilde} corresponds to the oriented triple  $(I_1^{-1},I_2^{-1},I_3^{-1})$ with compatible bases
$$\langle \alpha_1^\prime, -\alpha_2^\prime \rangle, \langle \beta_1^\prime, -\beta_2^\prime \rangle,\langle \gamma_1^\prime, -\gamma_2^\prime \rangle
$$
where $\alpha_i^\prime=N(I_1)^{-1}\widebar\alpha_i, \beta_i^\prime=N(I_2)^{-1}\widebar\beta_i,\gamma_i^\prime=N(I_3)^{-1}\widebar\gamma_i$ for $i=1,2.$

Applying Proposition \ref{lemma:cubebasesmultiplication} to the cubes $R,S$ and $T$, we get that
\begin{equation}\label{eq:idealproduct}
\begin{split}
(x_1 \omega_1 + x_2 \omega_2) (u_1 \pi_1 + u_2 \pi_2) = a_1 \alpha_1^\prime + a_2 \alpha_2^\prime, \\
(y_1 \sigma_1 + y_2 \sigma_2) (v_1 \chi_1 + v_2 \chi_2) = b_1 \beta_1^\prime + b_2 \beta_2^\prime, \\
(z_1 \theta_1 + z_2 \theta_2) (w_1 \psi_1 + w_2 \psi_2) = c_1 \gamma_1^\prime + c_2 \gamma_2^\prime,
\end{split}
\end{equation}
where $\aaa=(a_1,a_2)^t = R^\iota(\x,\uu),\bb=(b_1,b_2)^t = S^\iota(\y,\vv), \cc=(c_1,c_2)^t = T^\iota(\z,\ww)$.

Multiplying together the equations in \eqref{eq:idealproduct}, yields the identity
\begin{equation}
\label{eq:compid1}
    [B(\x,\y,\z) \tau + B'(\x,\y,\z)][C(\uu,\vv,\ww) \tau +C'(\uu,\vv,\ww)] =  \tilde{A}(\aaa,\bb,\cc) \tau + \tilde{A}'(\aaa,\bb,\cc) .
\end{equation}
The cube $\tilde{A}$ is a representative of the inverse class of the cube $A$ and  has the property that
\begin{equation} \label{eq:cube_inverse_relation}
\tilde{A}((a_1,a_2)^t,(b_1,b_2)^t,(c_1,c_2)^t) = A((-a_1,a_2)^t,(-b_1,b_2)^t,(-c_1,c_2)^t).
\end{equation}
Substituting this relation in \eqref{eq:compid1}, expanding and equating the coefficients of $\tau$ on both sides gives us
$$ (B \star C) (\x,\y,\z;\uu,\vv,\ww) = A(R^\sigma(\x,\uu),S^\sigma(\y,\vv), T^\sigma(\z,\ww)).$$

Lastly, we obtain
\begin{align*}
  Q_1^B(1,0)\cdot Q_1^C(1,0)&=Q_1^A(r_{211},r_{111}),\\
  Q_2^B(1,0)\cdot Q_2^C(1,0)&=Q_2^A(s_{211},s_{111}),\\
  Q_3^B(1,0)\cdot Q_3^C(1,0)&=Q_3^A(t_{211},t_{111}).
\end{align*}
by applying the equation (\ref{eq:lemmermeyerid}) to the cubes $R,S,T$ respectively.

\subsection{Proof of Theorem \ref{thm:cubecomp}, reverse direction.}
Now suppose $A, B, C$ are three projective cubes of common discriminant $D\equiv\epsilon$ mod 4. Let $\x,\y,\z,\uu,\vv,\ww\in \Z^2.$ Suppose there exist cubes $R=(r_{ijk}),S=(s_{ijk}),T=(t_{ijk})$ of discriminant $D$ satisfying the identity
\begin{align*}
(B \star C)(\x,\y,\z;\uu,\vv,\ww)
= A(R^\sigma(\x,\uu),S^\sigma(\y,\vv),T^\sigma(\z,\ww)),
\end{align*}
for all $\x,\y,\z,\uu,\vv,\ww\in \Z^2$.
We also assume (\ref{eq:cubecompQ}) and (\ref{eq:cubecompQ2}).
Using equation \eqref{eq:cube_inverse_relation}, we have that 
\begin{align*}
(B \star C)&(\x,\y,\z;\uu,\vv,\ww) 
= \tilde{A}(R^\iota(\x,\uu),S^\iota(\y,\vv),T^\iota(\z,\ww)).
\end{align*}
We will first show that the above identity implies that 
\begin{align}
\label{eq:compid}
    [B(\x,\y,\z) \tau &+ B'(\x,\y,\z)][C(\uu,\vv,\ww) \tau +C'(\uu,\vv,\ww)]
\\ \nn
&=  \tilde{A}(R^\iota(\x,\uu),S^\iota(\y,\vv),T^\iota(\z,\ww) \tau + \tilde{A}'(R^\iota(\x,\uu),S^\iota(\y,\vv),T^\iota(\z,\ww)).
\end{align}
Expanding out equation (\ref{eq:compid}), we see that it suffices to show 
\begin{align}
\label{eq:companion_compid}
    B'(\x,\y,\z)  C'(\uu,\vv,\ww) + \frac{(D - \epsilon)}{4}&B(\x,\y,\z) C(\uu,\vv,\ww) \\ \nn  &= \tilde{A}'(R^\iota(\x,\uu),S^\iota(\y,\vv),T^\iota(\z,\ww)).
\end{align}

Before proceeding, we explicate the group action on cubes when the cube is thought of as a pair of bilinear forms.
For $\gamma$ a two-by-two matrix, the action takes the form
\begin{align} \label{eq:action_bilinear}
  ((\gamma,\id,\id)\circ R)(\x,\uu)
&=\gamma
   ( R(\underline e_1, \x,\uu),R(\underline e_2, \x,\uu))^t\\ \nn
((\id,\gamma, \id)\circ R)(\x,\uu)
&= R(\gamma^t \x, \uu) \\ \nn
((\id,\id,\gamma)\circ R)(\x,\uu)&= R(\x,\gamma^t \uu).
\end{align}

By the definition of companion cube given in \eqref{def:companion_cube}, we have
\begin{align*}
    \tilde{A}'(R^\iota(\x,\uu),S^\iota(\y,\vv),T^\iota(\z,\ww)) = \tilde{A}(L_1^{\tilde A}R^\iota(\x,\uu),S^\iota(\y,\vv),T^\iota(\z,\ww)).
\end{align*}
The first argument of $\tilde A$ is
\begin{equation}
  \label{eq:L1tildeA}
  L_1^{\tilde A}R^\iota(\x,\uu)=L_1^{\tilde A}
  \begin{pmatrix}
    0&1\\1&0
  \end{pmatrix}
  R(\x,\uu)=
  \begin{pmatrix}
    0&1\\1&0
  \end{pmatrix}(L_1^{A})^tR(\x,\uu).
\end{equation}
Using the description of the group action from \eqref{eq:action_bilinear}, the last expression becomes
 \begin{align}\nn
 \left( \left( \begin{pmatrix}
     0&1\\1&0
   \end{pmatrix}  (L_1^{A})^t, \id, \id\right) \circ  R \right) & (\x,\uu)
 =
 \left( \left( \begin{pmatrix}
     0&1\\1&0
   \end{pmatrix}(L_1^{R})^t, \id, \id\right) \circ R \right)(\x,\uu),
 \mbox{  by (\ref{eq:cubecompQ})}\\ \nn
 &= \left( \left( \begin{pmatrix}
     0&1\\1&0
   \end{pmatrix}, (L_2^{R})^t, \id\right) \circ R \right)(\x,\uu),
 \mbox{  by (\ref{eq:companionL})}\\ \nn
 &=\left( \left( \begin{pmatrix}
     0&1\\1&0
   \end{pmatrix}, \id, \id\right)\circ R \right) (L_2^R\x,\uu)\\ \nn
 &=R^\iota(L_2^R\x,\uu).
 \end{align}
 Hence,
 \begin{align*}
   \tilde{A}'&(R^\iota(\x,\uu),S^\iota(\y,\vv),T^\iota(\z,\ww)) \\ \nn
 &=\tilde{A}(R^\iota(L_2^R\x,\uu),S^\iota(\y,\vv),T^\iota(\z,\ww))\\ \nn
 &=(B\star C)(L_2^R\x,\y,\z;\uu,\vv,\ww)\\ \nn
 &= B(L_2^R\x,\y,\z)C^\prime(\uu,\vv,\ww)+B^\prime(L_2^R\x,\y,\z)C(\uu,\vv,\ww)+\epsilon B(L_2^R\x,\y,\z)C(\uu,\vv,\ww)\\ \nn
&=B^\prime(\x,\y,\z)C^\prime(\uu,\vv,\ww)+B^\prime(L_1^B\x,\y,\z)C(\uu,\vv,\ww)+\epsilon B^\prime(\x,\y,\z)C(\uu,\vv,\ww),
\end{align*}
since $L_2^R=L_1^B$.  Lastly, it is easy to check that $B'(L_1^B \x,\y,\z) = \left( \frac{D - \epsilon}{4} \right) B(\x,\y,\z)$, thus establishing equations (\ref{eq:companion_compid}) and (\ref{eq:compid}).

Now, we proceed to show how equation (\ref{eq:compid}) implies that $[A] + [B] + [C] = [\text{id}]$, or equivalently, $[B] + [C] = [\tilde{A}]$. We continue to use the notation from the start of Section \ref{sec:proof_cubes_forward} for the balanced triples and compatible bases corresponding to $A,B,C$ and $\tilde A.$

Using equation (\ref{eq:compid}), we get that
\begin{multline*}
    (x_1 \omega_1 + x_2 \omega_2) (y_1 \sigma_1 + y_2 \sigma_2) (z_1 \theta_1 + z_2 \theta_2) (u_1 \pi_1 + u_2 \pi_2) \\ (v_1 \chi_1 + v_2 \chi_2) (w_1 \psi_1 + w_2 \psi_2) = (a_1 \alpha_1^\prime + a_2 \alpha_2^\prime)(b_1 \beta_1^\prime + b_2 \beta_2^\prime)(c_1 \gamma_1^\prime + c_2 \gamma_2^\prime),
\end{multline*}
where $(a_1,a_2)^t = R^\iota(\x,\uu), (b_1,b_2)^t = S^\iota(\y,\vv), (c_1,c_2)^t = T^\iota(\z,\ww).$ We substitute $y_1=z_1=1, y_2=z_2=0, v_1=w_1=1$ and $v_2=w_2=0$ in the above equation to get
\begin{multline*}
        (x_1 \omega_1 + x_2 \omega_2) \sigma_1 \theta_1 (u_1 \pi_1 + u_2 \pi_2) \chi_1 \psi_1 \\ = (a_1 \alpha_1^\prime + a_2 \alpha_2^\prime) ( s_{211} \beta_1^\prime + s_{111} \beta_2^\prime) ( t_{211} \gamma_1^\prime + t_{111} \gamma_2^\prime).
\end{multline*}
This shows that $J_1 K_1 \subset\alpha I_1^{-1}$ for some $\alpha \in K$. If $N(J_1 K_1) = N(\alpha) N(I_1^{-1})$, then $J_1 K_1$ and $I_1^{-1}$ are equivalent (oriented) ideals. Therefore we need to show that
$$N(J_1) N(K_1) N(\sigma_1 \theta_1 \chi_1 \psi_1) = N(I_1^{-1}) N(( s_{211} \beta_1^\prime + s_{111} \beta_2^\prime) ( t_{211} \gamma_1^\prime + t_{111} \gamma_2^\prime)).$$
By Proposition \ref{prop:normQ}, the previous equality is equivalent to 
\begin{multline*} \label{eq:normtriple}
N(J_1) N(K_1) Q_2^B(1,0) N(J_2) Q_3^B(1,0) N(J_3) Q_2^C(1,0) N(K_2) Q_3^C(1,0) N(K_3) \\ = N(I_1^{-1}) N(I_2^{-1}) Q_2^{\tilde{A}}(s_{211},-s_{111}) N(I_3^{-1}) Q_3^{\tilde{A}}(t_{211},-t_{111}).
\end{multline*}
The above equation follows from the second and third equalities in \eqref{eq:cubecompQ2}, and hence $J_1 K_1 = I_1^{-1}$. Similarly, we deduce that $J_2 K_2 = I_2^{-1}$ and $J_3 K_3 = I_3^{-1}$. We conclude that $[B] + [C] = [\tilde{A}]$, thus completing the proof of the theorem.

\end{proof}

\subsection{Example}\label{subsec:cubesexample}

We illustrate the composition identity of $2 \times 2 \times 2$ cubes with an example.

Consider the cubes $A,B$ and $C$ shown below. 
\begin{equation*} 
A = \cube{0}{-1}{-2}{-1}{-1}{0}{0}{6}, 
B = \cube{0}{1}{2}{0}{1}{0}{-1}{-6},
C = \cube{0}{1}{4}{-1}{1}{0}{0}{-3}.
\end{equation*}
The cubes are of discriminant $D = -47$ and they satisfy 
$$[A] + [B] + [C] = [\text{id}]$$
in $\text{Cl}(\Z^2 \otimes \Z^2 \otimes \Z^2;-47).$ Also, the associated quadratic forms are given by $Q_1^A = Q_1^B = [2,1,6], Q_1^C = [4,-1,3], Q_2^A = [1,-1,12], Q_2^B = [1,1,12], Q_2^C = [1,1,12], Q_3^A = Q_3^B = [2,-1,6]$ and $Q_3^C = [4,1,3].$

The cubes $(R,S,T)$ given by
\begin{equation*} 
R = \cube{0}{-1}{-2}{0}{-2}{0}{1}{3}, 
S = \cube{0}{1}{1}{-1}{1}{0}{0}{-12},
T = \cube{0}{1}{2}{0}{2}{0}{1}{-3}.
\end{equation*}
are dual to the cubes $(A,B,C)$. The cubes $A,B,C$ satisfy the composition identity 
\begin{align*} 
(B\star C)&(\x,\y,\z;\uu,\vv,\ww)\\ \nn
&= A(R^\sigma(\x,\uu), S^\sigma(\y,\vv), T^\sigma(\z,\ww))
\end{align*}
and
\begin{align*}
  Q_1^B(1,0)\cdot Q_1^C(1,0)&=Q_1^A(r_{211}, r_{111})\\
  Q_2^B(1,0)\cdot Q_2^C(1,0)&=Q_2^A(s_{211}, s_{111})\\
  Q_3^B(1,0)\cdot Q_3^C(1,0)&=Q_3^A(t_{211}, t_{111}).
\end{align*}

\section{Composition of binary cubic forms} \label{sec:bcfcomp}  

We next consider the space $\mathrm{Sym}^3 \Z^2$ of integral binary cubic forms with triplicate central coefficients, that is, polynomials $f(x,y)$ of the form $a_0x^3 + 3a_1x^2y + 3a_2xy^2 + a_3y^3$. Such a binary cubic form has a natural representation as the triply symmetric cube
\begin{equation}
  \label{eq:triplysymmetriccube}
A_f = \cube{a_0}{a_1}{a_1}{a_2}{a_1}{a_2}{a_2}{a_3}.
\end{equation}
Indeed the polarization formula gives a bijective correspondence between binary cubic forms on $\Z^2$ and symmetric trilinear forms on $\Z^2\times\Z^2\times\Z^2.$

The main result of this section, Theorem \ref{thm:bcf1}, is an explicit composition formula for binary cubic forms.  Such a result has previously been established by B. Nativi (\cite{nativi2019analogue}). Under the hood, our methods are essentially the same as his, however, using our results from the previous section, we are able to give a more streamlined proof. See Remark \ref{rmk:nativi} for a further comparison with Nativi's work.

We briefly mention some other related works on composition laws for binary cubic forms.  The work of Slupinski-Stanton (\cite{Slupinski2012}) provides a detailed analysis of the structure of binary cubic forms (over any field with characteristic not equal to 2 or 3) from the perspective of symplectic geometry, and also describes the group structure on the classes of binary cubic forms. (See Theorems 3.34 and 3.46 in \cite{Slupinski2012}.) This is investigated more closely in the PhD thesis of G. Yassiyevich (\cite{yassiyevich2020arithmetic}) which describes the relation between the class groups in the integral and rational cases. 

Classically, binary cubic forms have also been investigated from the perspective of invariant theory, for example, by Cayley and Eisenstein. In fact, the syzygy we obtain in Section \ref{subsec:bcfsyzygy} was proved by Eisenstein (\cite{Eisenstein+1844+89+104}).  There is also an approach to Gauss composition of binary quadratic forms using Clifford algebras, due to M. Kneser (\cite{kneser1982composition}), which was used by Hoffman-Morales (\cite{hoffman2000arithmetic}) to generalize Eisenstein's theory of binary cubic forms to binary cubic forms over arbitrary integral domains with characteristic not equal to 2 or 3.

\subsection{The group law for binary cubic forms}
\label{subsec:bcfs_group_law}
We follow the notation and definitions from Section \ref{sec:Gausscomp}.

Let $f(x,y)=a_0x^3 + 3a_1x^2y + 3a_2xy^2 + a_3y^3$ be a binary cubic form associated to the triply-symmetric cube $A_f$ as in (\ref{eq:triplysymmetriccube}). If $A_f$ is projective then $f$ is also called projective. The discriminant of $f(x,y)$ is defined to be the discriminant of the corresponding cube $A_f$. Explicitly, it is given by
$$\text{disc}(f) = -3a_1^2a_2^2 + 4a_0a_2^3 + 4a_1^3a_3 - 6a_0a_1a_2a_3  + a_0^2a_3^2.$$
We say that $f(x,y)$ is nondegenerate if its discriminant is nonzero.

In \cite{bhargava:2004}, Bhargava constructs a canonical bijection between the set of nondegenerate $\text{SL}_2(\Z)$-orbits of $\text{Sym}^3\Z^2$ and the set of equivalence classes of triples $(S,I,\delta)$, where $S$ is a nondegenerate oriented quadratic ring, $I$ is an (unoriented) ideal of $S$, and $\delta$ is an invertible element of $S \otimes \mathbb{Q}$ such that $I^3 \subset \delta \cdot S$ and $N(I)^3 = N(\delta)$. Two triples $(S,I,\delta)$ and $(S',I',\delta')$ are said to be equivalent if there exists an isomorphism $\phi: S \rightarrow S'$ and an element $\kappa \in S' \otimes \Q$ such that $I' = \kappa \phi(I)$ and $\delta'=\kappa^3 \phi(\delta)$. Under this bijection, the discriminant of a binary cubic form $f$ equals the discriminant of the corresponding oriented quadratic ring, and the projective binary cubic forms correspond to triples $(S,I,\delta)$ such that $I$ is a projective $S$-module. Since, the classes of triples $(S,I,\delta)$ have a natural structure of an abelian group, the set of projective $\text{SL}_2(\Z)$-orbits of $\text{Sym}^3\Z^2$ of a fixed discriminant $D$ is also endowed with the structure of an abelian group, which is denoted by $\text{Cl}(\text{Sym}^3 \Z^2;D)$. The identity element of the group $[f_{\text{id},D}] (=[\text{id}])$ is given by the equivalence class of the form
\begin{equation*}
    f_{\text{id},D} = \begin{cases} 3x^2y + \frac D4 y^3 & \text{if } D \equiv 0 \text{ (mod } 4) \\
    3x^2y+3xy^2 + \frac{D+3}{4}y^3 & \text{if } D \equiv 1 \text{ (mod } 4) \end{cases}
\end{equation*}
corresponding to the triply symmetric cubes $A_{\text{id},D}$ given in (\ref{def:id_cube}).

We now explain how the bijection is constructed. Suppose we start with a triple $(S,I,\delta)$ as above. Let $\langle 1, \tau \rangle$ be a positively oriented $\Z$-basis of $S$, where $\tau$ satisfies \eqref{eq:taurelation}. Let $\langle \alpha, \beta \rangle$ be a positively oriented $\Z$-basis for $I$. Since $I^3 \subset \delta \cdot S$, there exist $a_i,a_i' \in \Z$, for $i=0,1,2,3$ such that 
\begin{equation}
\label{eq:bcfsyseq}
\begin{split}
    \alpha^3 &= \delta(a_0'+a_0 \tau) \\
    \alpha^2 \beta &= \delta(a_1'+a_1 \tau) \\
    \alpha \beta^2 &= \delta(a_2'+a_2 \tau) \\
    \beta^3 &= \delta(a_3'+a_3 \tau).
\end{split}
\end{equation}
The binary cubic form corresponding to the triple $(S,I,\delta)$ is given by $$f(x,y) = a_0x^3 + 3a_1x^2y + 3a_2xy^2 + a_3y^3.$$ The binary cubic form $f'(x,y)= a_0'x^3 + 3a_1'x^2y + 3a_2'xy^2 + a_3'y^3$ is called the companion form of $f(x,y)$, and it is easily verified that the cube $A_{f'}$ is the companion cube of $A_f$ defined in \eqref{def:companion_cube}.
In Section \ref{subsec:bcfsyzygy}, we will see that the companion binary cubic form $f'$ is a covariant of $f$ and there is a syzygy connecting them.

In the other direction, to get a triple $(S,I_f,\delta_f)$ corresponding to a given binary cubic form $f(x,y) \in \text{Sym}^3 \Z^2$, we solve the system of equations in (\ref{eq:bcfsyseq}). The numbers $a'_0,a'_1,a'_2,a'_3$ are determined in terms of $a_0,a_1,a_2,a_3$ as given on page 238 in \cite{bhargava:2004}. Then, the values of $\alpha$ and $\beta$ are determined uniquely up to multiplication by a scalar $\kappa \in K$. The scalar $\delta_f$ is uniquely determined by $\alpha$ and $\beta$, and it gets multiplied by $\kappa^3$ if $\alpha$ and $\beta$ are multiplied by $\kappa$. A possible choice of values for $\alpha,\beta$ and $\delta_f$ are
\begin{equation*} \label{eq:Ibasisdelta}
\begin{split}
\alpha & = a'_1 + a_1 \tau, \\
\beta & = a'_2 + a_2 \tau, \\
\delta_f & = \alpha \beta.
\end{split}
\end{equation*}
It follows from (\ref{eq:bcfsyseq}) that for $x,y \in \Z$, we can write
\begin{equation*}
\label{eq:bcfproduct0}
    (x \alpha + y \beta)^3 = \delta_f(f(x,y) \tau + f'(x,y)).
\end{equation*}

We can express the group law on $\text{Cl}(\text{Sym}^3 \Z^2;D)$ in terms of the group law on cubes  $\text{Cl}(\Z^2\otimes\Z^2\otimes\Z^2;D)$ as follows.  
\begin{proposition} \label{prop:bcf_classgroup_cubes_classgroup}
Let $f,g,h$ be three projective binary cubic forms of discriminant $D$ in $\mathrm{Sym}^3 \Z^2$ with corresponding triply-symmetric cubes $A_f, A_g$ and $A_h,$  respectively.  Then
$$[f]+[g]+[h]=[\id]$$
in $\mathrm{Cl}(\mathrm{Sym}^3 \Z^2;D)$ if and only if
$$[A_f]+[A_g]+[A_h]=[\id]
$$
in $\mathrm{Cl}(\Z^2\otimes\Z^2\otimes\Z^2;D)$ and $\delta_f\delta_g\delta_h$ is in $K^{\times3}$, where $K^{\times3}$ denotes the group of cubes of units of $K$. 
\end{proposition}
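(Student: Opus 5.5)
We work entirely on the ideal side, using Bhargava's bijections for $\mathrm{Sym}^3\Z^2$ and for cubes. The plan is to identify what the triply symmetric cube $A_f$ corresponds to, and then to compare the group laws on the two sides.

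First, we identify the balanced triple attached to $A_f$. Let $(S,I,\delta_f)$ correspond to $f$, with positively oriented basis $\langle\alpha,\beta\rangle$ of $I$. Equation \eqref{eq:bcfsyseq} says that $\alpha_i\alpha_j\alpha_k\delta_f^{-1}$ equals $a'_{ijk}+a_{ijk}\tau$ for the triply symmetric cube $A_f$. This is exactly \eqref{eq:system_cubetriple} for the triple
$$(I_1,I_2,I_3)=(I,I,\delta_f^{-1}I),$$
with compatible bases $\langle\alpha,\beta\rangle$, $\langle\alpha,\beta\rangle$, $\langle\delta_f^{-1}\alpha,\delta_f^{-1}\beta\rangle$. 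Balancedness follows from $I^3\subset\delta_f S$ and $N(I)^3=N(\delta_f)$. So $[A_f]\leftrightarrow(I,I,\delta_f^{-1}I)$, and under the isomorphism $\mathrm{Cl}(\Z^2\otimes\Z^2\otimes\Z^2;D)\cong \mathrm{Cl}^+(S)^2$ the class $[A_f]$ maps to $([I],[I])$. Some care with orientations is needed: the sign of $N(\delta_f)$ must agree with the orientation bookkeeping. I expect this to follow from $N(I)^3=N(\delta_f)$ together with the positive orientation of the basis.

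Second, we recall the group law on $\mathrm{Cl}(\mathrm{Sym}^3\Z^2;D)$. Triples multiply componentwise: $(I,\delta)\cdot(J,\delta')=(IJ,\delta\delta')$. The identity is the class of $(S,1)$, modulo the equivalence $(I,\delta)\sim(\kappa I,\kappa^3\delta)$. Hence $[f]+[g]+[h]=[\id]$ if and only if there is $\kappa\in K^\times$ with
$$I_fI_gI_h=\kappa S \quad\text{and}\quad \delta_f\delta_g\delta_h=\kappa^3\cdot u,$$
where the unit $u$ is absorbed by the choice of generator. We will check that the identity triple attached to $f_{\id,D}$ is $(S,1)$ up to equivalence, consistent with $A_{\id,D}$.

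For the forward direction, given $I_fI_gI_h=\kappa S$ and $\delta_f\delta_g\delta_h=\kappa^3$, the product of the three balanced triples is
$$(\kappa S,\kappa S,\kappa^{-2}S)\sim(S,S,S),$$
so $[A_f]+[A_g]+[A_h]=[\id]$. Moreover $\delta_f\delta_g\delta_h\in K^{\times3}$.

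For the converse, $[A_f]+[A_g]+[A_h]=[\id]$ gives $I_fI_gI_h=\lambda S$ for some $\lambda\in K^\times$, with oriented norms matching. Together with $\delta_f\delta_g\delta_h=\mu^3$ we must produce a single $\kappa$ satisfying both conditions. The main obstacle is the discrepancy between $\lambda^3$ and $\mu^3$. From $I^3\subset\delta S$ with equal norms, invertibility gives $I_f^3=\delta_f S$, so
$$\lambda^3S=\delta_f\delta_g\delta_hS=\mu^3S.$$
Hence $\mu^3=\lambda^3u$ for a unit $u\in S^\times$ with $N(u)=1$, by comparing norms. We then need to absorb $u$. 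The plan is to observe that replacing the generator $\lambda$ by $\lambda v$, for $v\in S^\times$, changes $\lambda^3$ by $v^3$. The remaining freedom is to reinterpret the identity triple $(S,u)$, which is equivalent to $(S,1)$ only if $u$ is a cube of a unit.

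I expect this unit issue to be the hard point. I would first check that it is already built into how the statement is meant: the projective triples are taken with $I^3=\delta S$, and classes of pairs $(I,\delta)$ with $I$ principal correspond to $\delta\in K^{\times3}S^\times$. If $u$ genuinely obstructs, I would examine whether the condition $\delta_f\delta_g\delta_h\in K^{\times3}$ combined with $\lambda^3S=\mu^3S$ already forces $u\in S^{\times 3}$. The condition gives $u=(\mu/\lambda)^3$ with $\mu/\lambda\in K^\times$ and $(\mu/\lambda)^3\in S$, so $\mu/\lambda$ is integral and hence a unit of the maximal order. Replacing $\kappa$ by $\mu$ then gives $I_fI_gI_h=\mu S$, provided $\mu/\lambda\in S^\times$. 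That last membership is the delicate point for nonmaximal orders, and I would settle it using $\mu/\lambda\cdot S=S$, which is immediate from $\lambda^3S=\mu^3S$ together with the invertibility of $S$-ideals.
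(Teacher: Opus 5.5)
Your setup is the paper's: you identify $A_f$ with the balanced triple $(I,I,\delta_f^{-1}I)$ and compare group laws, and your forward direction is fine. You also correctly isolate the real issue in the converse, namely the generator $\lambda$ of $I_fI_gI_h$ versus the cube root $\mu$ of $\delta_f\delta_g\delta_h$. The paper's one-line proof passes over this by using a single $\delta$ for both conditions.

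Your resolution, however, fails. From $\lambda^3S=\mu^3S$ you cannot conclude $(\mu/\lambda)S=S$. For a non-maximal order the group of invertible fractional ideals has torsion. Its principal part is $K^\times/S^\times$, and any $\nu\in\mathcal{O}^\times\setminus S$ with $\nu^3\in S$ has order $3$ there. For example, take $S=\Z[\sqrt5]$ (so $D=20$) and $\nu=\frac{3+\sqrt5}{2}$: then $\nu^3=9+4\sqrt5\in S^\times$ but $\nu\notin S$. Your earlier remark that the unit $u$ is ``absorbed by the choice of generator'' has the same defect; it holds only when $u\in(S^\times)^3$. Your integrality argument yields only $\nu\in\mathcal{O}^\times$. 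So it closes the proof exactly when $S^\times\cap K^{\times3}=(S^\times)^3$, for instance when $S$ is maximal.

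This gap cannot be repaired, because the converse is false as stated. Take $D=20$ and let $f$ be the form attached to $(S,S,\delta)$ with $\delta=9+4\sqrt5$ (note $N(\delta)=1$). With basis $\langle 1,\sqrt5\rangle$ of $S$ this form is $-4x^3+27x^2y-60xy^2+45y^3$, which is $\mathrm{SL}_2(\Z)$-equivalent to $6x^2y-9xy^2+4y^3$. Let $g=h=f_{\id,20}=3x^2y+5y^3$.

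Then $A_f$ corresponds to $(S,S,\delta^{-1}S)=(S,S,S)$, so $[A_f]+[A_g]+[A_h]=[\id]$; indeed all $Q_i^{A_f}$ equal $x^2-5y^2$. Moreover, up to cubes, $\delta_f\delta_g\delta_h=9+4\sqrt5=\bigl(\frac{3+\sqrt5}{2}\bigr)^3\in K^{\times3}$.

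Yet $[f]\neq[\id]$. The equivalence $(S,\delta)\sim(S,1)$ would need $\kappa\in S^\times$ with $\kappa^3=9-4\sqrt5$, and the only cube root of that number in $K$ is $\frac{3-\sqrt5}{2}\notin S$. You can also see this directly. The factor $y$ is the only rational linear factor of both $6x^2y-9xy^2+4y^3$ and $3x^2y+5y^3$. Hence any equivalence between them must be $\pm\begin{pmatrix}1&t\\0&1\end{pmatrix}$, which keeps the $x^2y$-coefficient of $3x^2y+5y^3$ equal to $\pm3\neq 6$.

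So the correct conclusion is that the proposition needs a hypothesis such as $S$ maximal. Alternatively, its condition must be strengthened to require $\delta_f\delta_g\delta_h=\kappa^3$ for some $\kappa$ with $I_fI_gI_h=\kappa S$. The delicate point cannot be settled in general.
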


\begin{proof}
    Let $f,g$ and $h$ correspond to the triples $(S,I_1,\delta_f), (S,I_2,\delta_g)$ and $(S, I_3,\delta_h)$ respectively. The cubes $A_f,A_g$ and $A_h$ correspond to the balanced triples of ideals $(I_1,I_1, \delta_f^{-1}I_1),(I_2,I_2, \delta_g^{-1}I_2)$ and $(I_3,I_3, \delta_h^{-1}I_3)$ respectively. Then, $[f]+[g]+[h] = [\text{id}]$ if and only if $I_1I_2I_3=(\delta)$ and $\delta_f \delta_g\delta_h=\delta^3$ for some $\delta\in K^{\times}$, which is equivalent to $[A_f]+[A_g]+[A_h]=[\id]$.
\end{proof}

We note that the binary quadratic forms $Q_i^{A_f}$  associated to the triply symmetric cube $A_f$ satisfy $Q_1^{A_f}=Q_2^{A_f}=Q_3^{A_f}$ and we denote the common binary quadratic form by $Q_f$.

\subsection{Explicit composition for binary cubic forms}
We now define the form product of two projective binary cubic forms of common discriminant $D$.  For two such forms $f$ and $g$, define
\begin{align*}
    (f \star g)((x,y);(u,v)) = f(x,y)g'(u,v) + f'(x,y)g(u,v) + \epsilon f(x,y) g(u,v)
\end{align*}
where as usual, $\epsilon=0$ or $1$ is congruent to $D$ mod 4.  Our main theorem for binary cubic forms is the following.


\begin{theorem} \label{thm:bcf1}
  Let $f, g, h \in \text{Sym}^3 \Z^2$ be three projective binary cubic forms of common discriminant $D$ and $\epsilon\in\{0,1\}$ congruent to $D$ mod 4. Let $x,y,u,v\in \Z.$ Then
$$[f]+[g]+[h]=[\id]$$
 if and only if there exists a cube $R$ of discriminant $D$ satisfying
 \begin{align*}
 (g\star h)&((x,y);(u,v))\\ \nn
 &= f(R^\sigma((x,y),(u,v))).
 \end{align*}
 and
\begin{equation}\label{eq:bcfcompQ}
         Q_1^{R} = Q^f, Q_2^{R} = Q^g,
\end{equation}
and
\begin{align} \label{eq:bcfcompQ2}
  Q^g(1,0)\cdot Q^h(1,0)&=Q^f(r_{211},r_{111}).
\end{align}
\end{theorem}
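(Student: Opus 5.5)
The plan is to reduce Theorem \ref{thm:bcf1} to Theorem \ref{thm:cubecomp} applied to the triply symmetric cubes $A_f,A_g,A_h$. The dictionary is as follows. For a triply symmetric cube $A_f$ one has $A_f(\x,\y,\z)$ equal to the polarization of $f$, so $A_f(\x,\x,\x)=f(\x)$. The companion cube $A_f'$ equals $A_{f'}$ (noted in Section \ref{subsec:bcfs_group_law}). Hence restricting $(A_g\star A_h)$ to the diagonal $\x=\y=\z=(x,y)^t$, $\uu=\vv=\ww=(u,v)^t$ gives exactly $(g\star h)((x,y);(u,v))$. Also $Q_1^{A_f}=Q_2^{A_f}=Q_3^{A_f}=Q_f$.

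\textbf{Forward direction.} Assume $[f]+[g]+[h]=[\id]$. By Proposition \ref{prop:bcf_classgroup_cubes_classgroup} we then have $[A_f]+[A_g]+[A_h]=[\id]$, and moreover $\delta_f\delta_g\delta_h=\delta^3$. I would redo the proof of the forward direction of Theorem \ref{thm:cubecomp} rather than only cite it, so that $R=S=T$ can be arranged.

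Take the triples $(I_1,I_1,\delta_f^{-1}I_1)$, $(I_2,I_2,\delta_g^{-1}I_2)$, $(I_3,I_3,\delta_h^{-1}I_3)$, rescaled so that $I_1I_2I_3=S$ and $\delta_f\delta_g\delta_h=1$; the condition $\delta\in K^{\times}$ permits this via $\kappa$-rescaling. Use the same basis $\langle\alpha,\beta\rangle$ for both copies of each $I_i$. The three dual balanced triples $(I_1,I_2,I_3)$, $(I_1,I_2,I_3)$, $(\delta_f^{-1}I_1,\delta_g^{-1}I_2,\delta_h^{-1}I_3)$ then share the same first two ideals with the same bases. Since $\delta_f\delta_g\delta_h=1$, the third triple differs from the first only by the scalar rebasing $\delta_f^{-1}\delta_g^{-1}\delta_h^{-1}=1$, so one can choose $R=S=T$ as cubes.

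The identity of Theorem \ref{thm:cubecomp} then reads $(A_g\star A_h)(\x,\y,\z;\uu,\vv,\ww)=A_f(R^\sigma(\x,\uu),R^\sigma(\y,\vv),R^\sigma(\z,\ww))$. Specializing to the diagonal gives $(g\star h)=f(R^\sigma((x,y),(u,v)))$. Conditions \eqref{eq:bcfcompQ} and \eqref{eq:bcfcompQ2} are the first lines of \eqref{eq:cubecompQ} and \eqref{eq:cubecompQ2}.

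\textbf{Reverse direction.} Given $R$ with the stated identity, the main obstacle is that the hypothesis holds only on the diagonal, whereas Theorem \ref{thm:cubecomp} needs the full trilinear identity. Both sides are cubic in $(x,y)$ and cubic in $(u,v)$. Polarizing in $(x,y)$ and in $(u,v)$ separately (valid over $\Q$, and the polarization is unique) recovers the identity with $S=T=R$. Indeed, $A_f(R^\sigma(\x,\uu),R^\sigma(\y,\vv),R^\sigma(\z,\ww))$ is symmetric under simultaneous permutations of the pairs $(\x,\uu),(\y,\vv),(\z,\ww)$, and so is $A_g\star A_h$. However, full bi-polarization requires symmetry in the $\x$'s and $\uu$'s independently, which the right side lacks. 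So instead I would run the argument of the reverse proof directly.

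From the diagonal identity, argue as in \eqref{eq:companion_compid} that $(x\alpha_g+y\beta_g)^3(u\alpha_h+v\beta_h)^3/(\delta_g\delta_h)$ equals $\tilde{A_f}$-type data at $R^\iota$. The companion-term verification uses $L_2^R=L_1^{A_g}$ exactly as before, since $Q_2^R=Q^g$. This gives $I_2I_3\subset \kappa I_1^{-1}$ together with the associated cube relation on $\delta$'s. The norm equality \eqref{eq:bcfcompQ2} then forces equality of ideals, via Proposition \ref{prop:normQ} applied at $(x,y)=(u,v)=(1,0)$. Comparing the generators $\alpha^3$ of both sides yields that $\delta_f\delta_g\delta_h$ is a cube, up to the scalar $\kappa$.

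Finally, apply Proposition \ref{prop:bcf_classgroup_cubes_classgroup}. The step of extracting the $\delta$-cube condition from the single diagonal identity is where I expect the real work.
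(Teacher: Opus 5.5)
Your forward direction is the paper's argument. You normalize so that $I_1I_2I_3=S$ and $\delta_f\delta_g\delta_h=1$, note that the dual triples $(I_1,I_2,I_3)$ and $(\delta_f^{-1}I_1,\delta_g^{-1}I_2,\delta_h^{-1}I_3)$ then give the same cube, take $R=S=T$, and restrict to the diagonal.

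The reverse direction has a genuine gap, and it starts from a false claim. You assert that $A_f(R^\sigma(\x,\uu),R^\sigma(\y,\vv),R^\sigma(\z,\ww))$ is not symmetric in $\x,\y,\z$ separately (with $\uu,\vv,\ww$ fixed). Under the hypothesis $Q_1^R=Q^f$ it is, and this is exactly the ``routine computation'' the paper relies on.

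\begin{itemize}
\item By Proposition \ref{lemma:cubebasesmultiplication} applied to $R$, the vector $R^\iota(\x,\uu)$ is the coordinate vector of a product $XU$ with respect to a basis of the inverse of the first ideal of $R$. Here $X$ and $U$ are the elements with coordinates $\x,\uu$ in the second and third ideals of $R$.
\item Since $Q_1^R=Q^f$, that basis has the same $S$-module structure matrix as the basis $\langle\alpha_1',-\beta_1'\rangle$ of $I_1^{-1}$ compatible with $\tilde{A_f}$. Hence it equals $\lambda\langle\alpha_1',-\beta_1'\rangle$ for some $\lambda\in K^\times$.
\item The right-hand side equals $\tilde{A_f}(R^\iota(\x,\uu),R^\iota(\y,\vv),R^\iota(\z,\ww))$. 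It is therefore the $\tau$-coefficient of $\lambda^{-3}\delta_f\,(XYZ)(UVW)$, visibly symmetric in $X,Y,Z$ and in $U,V,W$ separately.
\end{itemize}
You are right that symmetry under simultaneous permutation of the pairs alone would not suffice. With separate symmetry, polarizing in both groups of variables gives the full trilinear identity with $S=T=R$. Conditions \eqref{eq:cubecompQ} hold, and all three lines of \eqref{eq:cubecompQ2} reduce to \eqref{eq:bcfcompQ2}. So Theorem \ref{thm:cubecomp} gives $[A_f]+[A_g]+[A_h]=[\id]$. The cube condition on $\delta_f\delta_g\delta_h$ is then the easy part: put $x=u=1$, $y=v=0$ in the resulting identity in $K$.

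Your substitute argument does not work as written, because both steps you import from the cube proof evaluate the identity off the diagonal.
\begin{itemize}
\item The companion-term computation needs $(A_g\star A_h)$ at $(L_2^R\x,\x,\x;\uu,\uu,\uu)$.
\item The containment $J_1K_1\subset\alpha I_1^{-1}$ in the cube proof came from setting $\y=\z=\vv=\ww=\underline{e}_1$ while $\x,\uu$ vary.
\end{itemize}
From the diagonal you only get $(XU)^3=\delta_f\delta_g\delta_h\,E^3$, where $E\in I_1^{-1}$ has coordinates $R^\iota((x,y),(u,v))$. This is a relation between cubes, and $I_2I_3\subset\kappa I_1^{-1}$ does not follow from it directly. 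You would first need $\delta_f\delta_g\delta_h=c^3$, and then $XU=\zeta cE$ with one fixed $\zeta$, $\zeta^3=1$, for all $(x,y,u,v)$. This follows, for instance, from $\Phi^3-\Psi^3=\prod_{\zeta}(\Phi-\zeta\Psi)$ in the domain $K[x,y,u,v]$, with $\Phi=XU$ and $\Psi=cE$.

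So your order (containment first, cube condition second) is backwards, and the step you expect to be hard is the trivial one. The first defect can be repaired by differentiating the diagonal identity in a single slot, which needs only the simultaneous symmetry. The second needs the cube-root argument, which you do not supply. With both repairs your route works, but the direct fix is to use the separate symmetry and polarize, as the paper does.
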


\begin{proof}

Suppose that $[f]+[g]+[h]=[\id].$  Then $[A_f]+[A_g]+[A_h]=[\id]$ in $\text{Cl}(\Z^2\otimes\Z^2\otimes\Z^2;D).$  By Theorem \ref{thm:cubecomp}, there exist cubes $R,S,T$ of discriminant $D$ satisfying
\begin{align} \label{eq:bcf_cube_identity}
(A_g\star A_h)&(\x,\y,\z;\uu,\vv,\ww)\\ \nn
&= A_f(R^\sigma(\x,\uu), S^\sigma(\y,\vv), T^\sigma(\z,\ww))
\end{align}
and satisfying \eqref{eq:bcfcompQ} and \eqref{eq:bcfcompQ2}.  In fact, it follows from the proof on Theorem \ref{thm:cubecomp} that we can take $R=S=T.$  Indeed, let the three cubes $A_f, A_g, A_h$ correspond to the oriented triples of ideals $(I_1,I_1, \delta_f^{-1}I_1),(I_2,I_2, \delta_g^{-1}I_2)$ and $(I_3,I_3, \delta_h^{-1}I_3)$, respectively.  Then, $R$ and $S$ will both correspond to $(I_1,I_2,I_3)$ and $T$ to $(\delta_f^{-1} I_1, \delta_g^{-1}I_2, \delta_h^{-1}I_3).$  Assuming without loss of generality that $\delta_f\delta_g\delta_h=1$ we see that $R=S=T.$
Thus, taking $\x=\y=\z$ and $\uu=\vv=\ww$ in \eqref{eq:bcf_cube_identity}, we conclude
$$(g\star h)((x,y);(u,v))
 = f(R^\sigma((x,y),(u,v))).
$$
Equation \eqref{eq:bcfcompQ2} follows from applying equation \eqref{eq:lemmermeyerid} to the cube $R$.

Conversely, let us assume that
\begin{equation}
  \label{eq:gstarh=f}
(g\star h)((x,y);(u,v))
 = f(R^\sigma((x,y),(u,v)))
\end{equation}
and \eqref{eq:bcfcompQ} and \eqref{eq:bcfcompQ2} are satisfied. We will show that the two multilinear forms
\begin{equation}
  \label{eq:AgstarAh}
  (A_g\star A_h)(\x,\y,\z;\uu,\vv,\ww)
\end{equation}
and
\begin{eqnarray}
  \label{eq:AfRsigma}
  A_f(R^\sigma(\x,\uu), R^\sigma(\y,\vv), R^\sigma(\z,\ww))
\end{eqnarray}
in $\x,\y,\z,\uu,\vv,\ww\in \Z^2$ are equal.  Both are symmetric in $\x,\y,\z$ and in $\uu,\vv,\ww$ --- this is obvious in the first case, and a routine computation in the second case.  The polarization formula applied to (\ref{eq:gstarh=f}) implies that  (\ref{eq:AgstarAh}) and (\ref{eq:AfRsigma}) are equal.  See \cite{kraft-procesi} Lemma 4.6, for example.
We conclude that
$[A_f]+[A_g]+[A_h]=[\id]$ by again appealing to Theorem \ref{thm:cubecomp}.

As before let the three cubes $A_f, A_g, A_h$ correspond to the oriented triples of ideals $(I_1,I_1, \delta_f^{-1}I_1),(I_2,I_2, \delta_g^{-1}I_2)$ and $(I_3,I_3, \delta_h^{-1}I_3)$, respectively.  Choose $\Z$-bases $\langle \alpha_j, \beta_j \rangle$ of $I_j$, for $j=1,2,3.$ The cube $\tilde{A_f}$ corresponds to the triple $(I_1^{-1},I_1^{-1},\delta_f I_1^{-1})$, and a compatible choice of basis is given by  
$\langle \alpha_1',-\beta_1' \rangle$ where $\alpha_1' = N(I_1)^{-1} \overline{\alpha_1}$ and $\beta_1' = N(I_1)^{-1} \overline{\beta_1}.$
From \eqref{eq:gstarh=f}, we get
\begin{align*}
    \frac{1}{\delta_g}(x \alpha_2 + y \beta_2)^3 \frac{1}{\delta_h} (u \alpha_3 + v \beta_3)^3= {\delta_f}(a \alpha_1' + b\beta_1')^3,
\end{align*}
where $(a,b) = R^{\iota}((x,y),(u,v)).$ Substituting $x=u=1, y=v=0$ in the above equation, we get
\begin{align*}
    \frac{\alpha_2 ^3 \alpha_3^3}{\delta_g \delta_h} = \delta_f (r_{211} \alpha_3 + r_{111} \beta_3)^3.
\end{align*}
Hence, $\delta_f \delta_g \delta_h$ is a cube of an element of $K$.  This fact combined with $[A_f]+[A_g]+[A_h]=[\id]$ allows us to conclude that $[f]+[g]+[h] = [\text{id}]$, as desired.

\end{proof}

\begin{remark} \label{rmk:nativi}
    In \cite{nativi2019analogue}, the definition of composition of two binary cubic forms is essentially identical to what we defined as form product of the cubic forms. We note that the $\Z$-basis of $S(D)$ used in \cite{nativi2019analogue} is slightly different from our choice.

    We also point out that Theorem 1.1 in \cite{nativi2019analogue} is stated incorrectly. If $f,g$ and $h$ are projective binary cubic forms of common discriminant $D$ satisfying the composition identity \eqref{eq:gstarh=f}, then it is not true that $[f] + [g] + [h] = [\mathrm{id}]$ in $\mathrm{Cl}(\text{Sym}^3 \Z^2;D)$ as is claimed in \cite{nativi2019analogue}. This can be seen by replacing $f$ by $\tilde{f}$, which is the form in the inverse class of $f$, in the identity \eqref{eq:gstarh=f} and changing the bilinear form accordingly. (One needs to assume the conditions on the bilinear form given by the cube $R$ as stated in our Theorem \ref{thm:bcf1} in order to derive the conclusion $[f] + [g] + [h] = [\mathrm{id}]$.)

\end{remark}

\subsection{Companion cubic form and syzygy} \label{subsec:bcfsyzygy}

The binary cubic form $T_f(x,y)$ defined by 
$$ T_f(x,y) = f'(x,y) + \frac{\epsilon}{2} f(x,y),$$
is classically referred to as the ``cubicovariant" of the binary cubic form $f$. The binary cubic forms $f',T_f$ and the quadratic form $Q_f$ associated to the triply symmetric cube $A_f$ are all covariants of $f$. Furthermore,
they are connected via the following syzygy, which is due to Eisenstein (\cite{Eisenstein+1844+89+104}):
$$ T_f(x,y)^2 - \frac{D}{4} f(x,y)^2 = Q_f(x,-y)^3.$$
Here $D$ is the discriminant of $f(x,y)$ (and $Q_f(x,y)$).

\subsection{Example}
We demonstrate an example of the composition identity involving binary cubic forms.

Let $f(x,y) = 3x^2y+2y^3, g(x,y) = x^3+3x^2y+6xy^2+2y^3$ and $h(x,y) = x^3+3xy^2+2y^3.$ Then, $f,g$ and $h$ have discriminant $8$ and satisfy $[f]+[g]+[h]=[\text{id}]$ in $\text{Cl}(\text{Sym}^3 \Z^2;8)$. The companion cubic forms are $f'(x,y) = x^3+6xy^2, g'(x,y) = -x^3-6x^2y-6xy^2-4y^3$ and $h'(x,y) = x^3-3x^2y-3xy^2-3y^3$. The binary quadratic forms $Q^f,Q^g,Q^h$ are given by $Q^f=[1,0,-2],Q^g=[-1,0,2],Q^h=[-1,2,1]$.

The cube $R$ given by
$$R = \cube{0}{-1}{-1}{-1}{1}{1}{0}{2}{}$$
is dual to the triple $(f,g,h)$, and we have the composition identity
\begin{align*}
(g\star h)&((x,y);(u,v)) 
= f(R^\sigma((x,y),(u,v))).
\end{align*}

\section{Composition of pairs of binary quadratic forms} \label{sec:pbqfcomp}

Now, we consider the composition law on the space $\Z^2 \otimes \text{Sym}^2 \Z^2$ of pairs of binary quadratic forms with even middle coefficients. An element $F = (F_1,F_2) = (ax^2+2bxy+cy^2,dx^2+2exy+fy^2) \in \Z^2 \otimes \text{Sym}^2 \Z^2$ can be represented as a doubly symmetric cube given by 
$$A_F = \cube{a}{b}{b}{c}{d}{e}{e}{f}.$$

The group $\text{SL}_2(\Z) \times \text{SL}_2(\Z)$ acts on the space $\Z^2 \otimes \text{Sym}^2 \Z^2$ in the following way. If $(g,h) \in \text{SL}_2(\Z) \times \text{SL}_2(\Z)$ and $(F_1,F_2) \in \Z^2 \otimes \text{Sym}^2 \Z^2$, then 
\begin{equation} 
\label{eq:sl2xsl2}
(g,h) \cdot (F_1,F_2) = (p(h^t \cdot F_1) + q(h^t \cdot F_2), r(h^t \cdot F_1) + s(h^t \cdot F_2)) ,
\end{equation}
where $g = \begin{pmatrix} p & q \\ r & s \end{pmatrix}$, $h^t$ denotes transpose of $h$, and the action of $h^t$ on $F_1$ and $F_2$ is as given in equation \eqref{eq:sl2action}. Under the inclusion map $\Z^2 \otimes \text{Sym}^2 \Z^2 \hookrightarrow \Z^2 \otimes \Z^2 \otimes \Z^2$, the action given in equation \eqref{eq:sl2xsl2} corresponds to $(g,h,h) \in \Gamma$ acting on the doubly symmetric cube $A_F$ corresponding to $F=(F_1,F_2)$. 
We note that the binary quadratic forms $Q_i^{A_F}$  associated to the doubly symmetric cube $A_F$ satisfy $Q_2^{A_F}=Q_3^{A_F}$.
The discriminant of the cube is the unique polynomial invariant for this action.
The pair $F=(F_1,F_2)$ is called projective (resp. nondegenerate) if the cube $A_F$ is projective (resp. nondegenerate).

\subsection{Correspondence between pairs of binary quadratic forms and pairs of ideals}

With the same notation as Section \ref{sec:Gausscomp}, we briefly describe the correspondence between pairs of binary quadratic forms and certain balanced triples of oriented ideals.

There is a canonical bijection between nondegenerate $\text{SL}_2(\Z) \times \text{SL}_2(\Z)$-orbits of $\Z^2 \otimes \text{Sym}^2 \Z^2$ and isomorphism classes of pairs $(S,(I_1,I_2,I_3))$, where $S$ is a nondegenerate oriented quadratic ring and $(I_1,I_2,I_3)$ is an equivalence class of balanced triples of oriented ideals of $S$ such that $I_2=I_3$. Under this bijection, the discriminant of an element of $\Z^2 \otimes \text{Sym}^2 \Z^2$ equals the discriminant of the corresponding oriented quadratic ring. 
The proof of this bijection is similar to the case of $2 \times 2 \times 2$ cubes.

The projective $\text{SL}_2(\Z) \times \text{SL}_2(\Z)$-orbits of $\Z^2 \otimes \text{Sym}^2 \Z^2$ correspond to pairs $(S,(I_1,I_2,I_2))$, where $I_1$ and $I_2$ are projective $S$-modules. Hence, the set of projective $\text{SL}_2(\Z) \times \text{SL}_2(\Z)$-orbits of $\Z^2 \otimes \text{Sym}^2 \Z^2$ of a fixed discriminant $D$ is equpped with the structure of an abelian group, and this group is denoted by $\text{Cl}(\Z^2 \otimes \text{Sym}^2 \Z^2;D)$. The identity element of the group is the class of the pair $F_{\text{id},D}$ given by 
\begin{equation*}
    F_{\text{id},D} = \begin{cases}
        \left( 2xy,x^2+\frac D4 y^2 \right) & \text{if } D \equiv 0 \text{ (mod } 4) \\
        \left( 2xy+y^2,x^2+2xy+\frac{D+3}{4} y^2 \right) & \text{if } D \equiv 1 \text{ (mod } 4).
    \end{cases}
\end{equation*}
We see that the ideal class of $I_1$ in a balanced triple $(I_1,I_2,I_2)$ is determined by the ideal class of $I_2$. Hence, we get the following isomorphism:
\begin{equation*} \label{eq:bqfandpbqfiso}
\text{Cl}(\Z^2 \otimes \text{Sym}^2 \Z^2;D) \cong \text{Cl}^{+}(S(D)).
\end{equation*}

Let $F = (F_1,F_2) \in \Z^2 \otimes \text{Sym}^2 \Z^2$ and let $\x=(x_1,x_2)^t,\y=(y_1,y_2)^t \in \Z^2$. Then we denote $F(\x,\y)$ to be 
$$F(\x,\y) = x_1 F_1(y_1,y_2) + x_2F_2(y_1,y_2).$$

\subsection{Composition identities for pairs of binary quadratic forms}

Let $F = (F_1,F_2)$ and $G = (G_1,G_2)$ be two pairs of binary quadratic forms in $\Z^2 \otimes \text{Sym}^2 \Z^2$ of common discriminant $D$. We define their form product $F \star G$ to be
\begin{align*}
    (F \star G)((\x,\y);(\uu,\vv)) = F(\x,\y) G'(\uu,\vv) + F'(\x,\y) G(\uu,\vv) + \epsilon F(\x,\y) G(\uu,\vv),
\end{align*}
for $\x,\y,\uu,\vv \in \Z^2$
We have the following theorem on composition of pairs of binary quadratic forms.  

\begin{theorem}
\label{thm:pbqfcomp}
    Let $F = (F_1,F_2), G = (G_1,G_2), H=(H_1,H_2) \in \Z^2 \otimes \mathrm{Sym}^2 \Z^2$ be three projective pairs of binary quadratic forms of common discriminant $D$ and $\epsilon \in \{0,1 \}$ congruent to $D$ mod $4$. Let $\x,\y,\uu,\vv \in \Z^2$. Then 
    $$[F] + [G] + [H] = [\mathrm{id}]$$
    if and only if there exist two cubes $R$ and $S$ of discriminant $D$ satisfying
    \begin{align*}
        (G \star H)((\x,\y);(\uu,\vv)) = F(R^\sigma(\x,\uu),S(\y,\vv))
    \end{align*}
    and 
    \begin{align*}
        Q_1^{R} = Q_1^F, \ Q_2^{R} = Q_1^G,
    \end{align*}
    and 
    \begin{equation*}
        \begin{split}
            Q_1^G(1,0) Q_1^H(1,0) = Q_1^F(r_{211},r_{111}), \\
            Q_2^G(1,0) Q_2^H(1,0) = Q_2^F(s_{211},s_{111}).
        \end{split}
    \end{equation*}
\end{theorem}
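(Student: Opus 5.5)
The plan is to reduce Theorem \ref{thm:pbqfcomp} to Theorem \ref{thm:cubecomp}, as was done for binary cubic forms in Theorem \ref{thm:bcf1}. Two facts are needed first. The map $F\mapsto A_F$ respects the group laws: $[F]+[G]+[H]=[\id]$ in $\mathrm{Cl}(\Z^2\otimes\mathrm{Sym}^2\Z^2;D)$ if and only if $[A_F]+[A_G]+[A_H]=[\id]$ in $\mathrm{Cl}(\Z^2\otimes\Z^2\otimes\Z^2;D)$. This holds because both groups are described by componentwise multiplication of balanced triples $(I_1,I_2,I_2)$, and the identity $F_{\id,D}$ is sent to a doubly symmetric representative of $[\id]$. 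Also, the companion cube of $A_F$ is again doubly symmetric, and it equals $A_{F'}$. This follows from \eqref{eq:companionL}, since $L_1^{A_F}$ acts only on the first slot.

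\emph{Forward direction.} Suppose $[F]+[G]+[H]=[\id]$. Let $A_F,A_G,A_H$ correspond to $(I_1,I_2,I_2)$, $(J_1,J_2,J_2)$, $(K_1,K_2,K_2)$, rescaled as in Section \ref{sec:proof_cubes_forward} so that $I_mJ_mK_m=S$. Choose the bases of the second and third ideal in each triple to be the same; this is possible because the cubes are symmetric in the last two slots. Then the construction in the proof of Theorem \ref{thm:cubecomp} produces dual cubes $R,S,T$ with $S=T$, both corresponding to $(I_2,J_2,K_2)$ with identical bases. Theorem \ref{thm:cubecomp} gives
\[
(A_G\star A_H)(\x,\y,\z;\uu,\vv,\ww)=A_F(R^\sigma(\x,\uu),S^\sigma(\y,\vv),S^\sigma(\z,\ww)).
\]
We then set $\z=\y$ and $\ww=\vv$. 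The left side becomes $(G\star H)((\x,\y);(\uu,\vv))$. On the right, $S^\sigma$ appears in both symmetric slots, and $A_F(\aaa,\bb,\bb)=F(\aaa,\bb)$.

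The statement has $S(\y,\vv)$ where we obtained $S^\sigma(\y,\vv)$. To reconcile these, replace $S$ by the cube $S^\sigma$ itself. By Proposition \ref{prop:bqfs_Aiota,Asigma,Atilde} this changes $Q_1^S$ only, by $(x,y)\mapsto(-y,x)$, leaves $Q_2^S$ and $Q_3^S$ unchanged, and preserves the discriminant. I expect this relabeling to need checking against the last condition $Q_2^G(1,0)Q_2^H(1,0)=Q_2^F(s_{211},s_{111})$, since $\sigma$ moves the entries $s_{211},s_{111}$. If the indices do not match, I would adjust by the sign or swap coming from $\iota$ and $\sigma$ so that the coefficients fit. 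The remaining conditions $Q_1^R=Q_1^F$, $Q_2^R=Q_1^G$ and the norm identities are inherited directly from Theorem \ref{thm:cubecomp}.

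\emph{Reverse direction.} From the two-slot identity we must recover the full cube identity for $A_G\star A_H$ and $A_F(R^\sigma(\cdot),S^\sigma(\cdot),S^\sigma(\cdot))$. Both sides are linear in $(\x,\uu)$ and, in the remaining variables, are bilinear forms in the pairs $(\y,\vv)$ and $(\z,\ww)$ that are symmetric under swapping $(\y,\vv)\leftrightarrow(\z,\ww)$. For the left side this follows from double symmetry of $A_G$, $A_H$ and their companions; for the right side it follows from double symmetry of $A_F$. Polarization, as in the proof of Theorem \ref{thm:bcf1}, then gives equality of the full multilinear forms. Theorem \ref{thm:cubecomp}, with $T=S$ and its hypotheses (the third norm condition equals the second because $Q_2=Q_3$ for doubly symmetric cubes), yields $[A_F]+[A_G]+[A_H]=[\id]$, hence $[F]+[G]+[H]=[\id]$. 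Unlike the cubic case, no cube-root condition on some $\delta$ arises.

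The main obstacle is the bookkeeping in the forward direction: justifying that the same compatible basis can be used for $I_2$ and $I_3=I_2$ so that $S=T$ exactly, and matching $S$ versus $S^\sigma$ with the index convention in the norm condition.
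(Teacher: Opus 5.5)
\textbf{The gap: polarization in the reverse direction.} The hypothesis only says the two six-variable forms agree on the locus $\z=\y$, $\ww=\vv$. Symmetry under the \emph{simultaneous} swap $(\y,\vv)\leftrightarrow(\z,\ww)$ does not let you polarize from that locus. The form $c(\x,\uu)\det(\y,\z)\det(\vv,\ww)$ is symmetric under this swap and vanishes identically there. (Viewing each side as a symmetric bilinear form in $\y\otimes\vv$ and $\z\otimes\ww$, the diagonal only sees rank-one tensors, and $\det\otimes\det$ is exactly what is invisible on them.)

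The polarization in the proof of Theorem \ref{thm:bcf1} uses symmetry in $\y\leftrightarrow\z$ alone and in $\vv\leftrightarrow\ww$ alone. Your left side has this. On the right it does not follow from double symmetry of $A_F$. The defect $A_F(\aaa,S^\sigma(\y,\vv),S^\sigma(\z,\ww))-A_F(\aaa,S^\sigma(\z,\vv),S^\sigma(\y,\ww))$ equals $k(\aaa)\det(\y,\z)\det(\vv,\ww)$, where $k(\aaa)=A_F(\aaa,S^\sigma(e_1,e_1),S^\sigma(e_2,e_2))-A_F(\aaa,S^\sigma(e_2,e_1),S^\sigma(e_1,e_2))$. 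This is a genuine condition on $S$: for $F=(x^2,y^2)$ and $S^\sigma(\y,\vv)=(y_1v_1,\,y_1v_2+y_2v_1)$ you get $k(\aaa)=-a_2$. It does hold for an honest dual cube, where $Q_1^S=Q_2^F$, but nothing in the hypotheses gives you that. So you cannot feed a full cube identity into Theorem \ref{thm:cubecomp} as a black box.

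\textbf{How to fix it.} Run the reverse argument of Theorem \ref{thm:cubecomp} directly on the two-slot identity, using bases that are identical in the two symmetric slots.
The companion step only replaces $\x$ by $L_2^R\x$. So it still gives $[G(\x,\y)\tau+G'(\x,\y)][H(\uu,\vv)\tau+H'(\uu,\vv)]=\tilde F(\aaa,\bb)\tau+\tilde F'(\aaa,\bb)$, with $\aaa=R^\iota(\x,\uu)$ and $\bb=S^\iota(\y,\vv)$.
In the notation of Section \ref{sec:proof_cubes_forward} this reads $(x_1\omega_1+x_2\omega_2)(u_1\pi_1+u_2\pi_2)\bigl[(y_1\sigma_1+y_2\sigma_2)(v_1\chi_1+v_2\chi_2)\bigr]^2=(a_1\alpha_1'+a_2\alpha_2')(b_1\beta_1'+b_2\beta_2')^2$.
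Setting $\y=\vv=e_1$ and using the $Q_2$ condition gives $J_1K_1\sim I_1^{-1}$.
Setting $\x=\uu=e_1$ gives $X^2=\lambda Y^2$ for two $K$-valued bilinear forms in $(\y,\vv)$, hence $X=\mu Y$ with $\mu\in K$. Therefore $J_2K_2\subset\mu I_2^{-1}$, and the $Q_1$ condition forces equality of norms.
The second component really needs this separate argument. From the first component and $[I_1]=[I_2]^{-2}$ (and likewise for $J$, $K$) you only get $[I_2J_2K_2]^2=1$.

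\textbf{The forward direction.} Drop the relabeling $S\mapsto S^\sigma$; it is not a bookkeeping matter. If $\hat S=S^\sigma$, then $\hat s_{211}=s_{111}$ and $\hat s_{111}=-s_{211}$. The second norm condition becomes $Q_2^G(1,0)Q_2^H(1,0)=Q_2^F(s_{111},-s_{211})$, which fails in general. In the example at the end of Section \ref{sec:pbqfcomp}, $Q_2^F=[40,63,25]$ and $Q_2^G(1,0)Q_2^H(1,0)=8\cdot 4=32=Q_2^F(4,-4)=Q_2^F(s_{211},s_{111})$, whereas $Q_2^F(-4,-4)=2048$. That example is written with $S^\sigma(\y,\vv)$, so the $S(\y,\vv)$ in the displayed statement is a misprint.

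Your specialization $\z=\y$, $\ww=\vv$ of the cube identity with $S=T$ proves exactly the $S^\sigma$ version, and it is sound. Double symmetry of $A$ and $A'$ gives $\beta_j\gamma_k=\beta_k\gamma_j$, so the two symmetric slots can be given identical bases. The rescaling to $I_mJ_mK_m=S$ can then use equal scalars in those slots.
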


\begin{proof}

We omit the proof which is similar to that of Theorem \ref{thm:cubecomp}.

\end{proof}

\subsection{Example}

We consider the three elements $F=(F_1,F_2),G= (G_1,G_2), H=(H_1,H_2) \in \Z^2 \otimes \text{Sym}^2 \Z^2$ given by
\begin{equation*}
    \begin{split}
        (F_1(x,y),F_2(x,y) &= (80xy-63y^2,x^2-30xy+23y^2), \\
        (G_1(x,y),G_2(x,y)) &= (-9x^2-2xy+y^2,10x^2+4xy-y^2), \\
        (H_1(x,y),H_2(x,y)) &= (2x^2-y^2,-x^2-4xy+y^2).
    \end{split}
\end{equation*}
Then, $F,G$ and $H$ have discriminant $-31$ and satisfy
$$[F]+[G]+[H] = [\text{id}]$$ 
in $\text{Cl}(\Z^2 \otimes \text{Sym}^2 \Z^2;-31)$. The corresponding companion pairs are given by
\begin{equation*}
    \begin{split}
        (F_1'(x,y),F_2'(x,y) &= (1600x^2 - 2560xy+1016y^2,-569x^2+910xy-361y^2), \\
        (G_1'(x,y),G_2'(x,y)) &= (x^2+18xy+y^2,6x^2-20xy-2y^2), \\
        (H_1'(x,y),H_2'(x,y)) &= (-8xy+y^2,-8x^2+8xy+3y^2).
    \end{split}
\end{equation*}

We have the composition identity 
\begin{equation*}
    (G \star H)((\x,\y);(\uu,\vv)) = F(R^\sigma(\x,\uu),S^\sigma(\y,\vv)),
\end{equation*}
where the cubes $R$ and $S$ are given by
\begin{equation*}
    R = \cube{20}{70}{-6}{(-101)}{-7}{-25}{2}{36}{\ \ \ \ \  ,}\  S = \cube{-4}{9}{4}{1}{4}{-7}{-3}{(-1)}{.}
\end{equation*}

\section{Composition of pairs of quaternary alternating 2-forms} \label{sec:pqaltcomp}

The last two composition laws are defined on spaces obtained from the space of $2 \times 2 \times 2$ cubes by skew-symmetrization operations. The first is the space $\Z^2 \otimes \wedge^2 \Z^4$ of pairs of quaternary alternating $2$-forms obtained by applying a double skew-symmetry operation on cubes. In this case, there is a natural map 
$$ \phi: \Z^2 \otimes \Z^2 \otimes \Z^2 \rightarrow \Z^2 \otimes \wedge^2 \Z^4$$
defined in Section 2.6 of \cite{bhargava:2004}.  To describe this map we identify $\Z^2$ with its $\Z$-dual and let $\x,\yo,\yt,\zo.\zt\in \Z^2.$  For a cube $A\in \Z^2\otimes\Z^2\otimes\Z^2$ define
\begin{equation}
  \label{eq:1}
  \phi(A)(\x, (\yo,\yt), (\zo,\zt))
=A(\x,\yo,\zt)-A(\x,\zo,\yt).
\end{equation}
 After making a choice of basis, this map is explicitly given by
\begin{equation}
\label{eq:cubetoaltpair}
\cube{a}{b}{c}{d}{e}{f}{g}{h}{} \rightarrow \left( \begin{bmatrix}
& & a & b \\
& & c & d \\
-a & -c & & \\
-b & -d & &
\end{bmatrix}, \begin{bmatrix}
& & e & f \\
& & g & h \\
-e & -g & & \\
-f & -h & &
\end{bmatrix} \right).
\end{equation} 

The group $\text{SL}_2(\Z) \times \text{SL}_4(\Z)$ has a natural action on $\Z^2 \otimes \wedge^2 \Z^4$ and this action has a unique polynomial invariant, again called the discriminant of a pair $(F_1,F_2) \in \Z^2 \otimes \wedge^2 \Z^4$. 
It is given by $\text{disc}(\text{Pfaffian}(F_1x-F_2y))$, where the Pfaffian of a $4 \times 4$ skew-symmetric matrix $M$ is given by
$$ \text{Pfaffian}(M) = \sqrt{\text{Det}(M)}.$$
The choice of sign in the square-root is such that $\text{Pfaffian}\left( \begin{pmatrix} 0 & I \\ -I & 0 \end{pmatrix} \right) = 1,$ where $I$ is the $2 \times 2$ identity matrix.
A pair $(F_1,F_2) \in \Z^2 \otimes \wedge^2 \Z^4$ is said to be projective if it is $\text{SL}_2(\Z) \times \text{SL}_4(\Z)$-equivalent to a pair which is the image of a projective cube under the mapping $\phi:\Z^2 \otimes \Z^2 \otimes \Z^2 \rightarrow \Z^2 \otimes \wedge^2 \Z^4$ of \eqref{eq:cubetoaltpair}.  The map $\phi$ is clearly not surjective, but Bhargava shows that the map is surjective onto    $\text{SL}_2(\Z) \times \text{SL}_4(\Z)$ equivalence classes (Section 3.6 of \cite{bhargava:2004}).

\subsection{Correspondence between pairs of quaternary alternating 2-forms and pairs of ideals}

To describe the correspondence between the forms and ideals, we need to recall some notation from (\cite{bhargava:2004}). 
Let $S$ be an oriented quadratic ring and let $K=S \otimes \mathbb{Q}$. 

\begin{definition}
A \emph{rank $n$ ideal} of $S$ is an $S$-submodule of $K^n$ having rank $2n$ as a $\Z$-module. An oriented rank $n$ ideal of $S(D)$ is a pair $(M,\epsilon)$, where $M$ is any rank $n$ ideal of $S(D)$ and $\epsilon=\pm 1$ gives the orientation of $M$. 
\end{definition}

The norm of an oriented rank $n$ ideal $(M,\epsilon)$ is defined to be $\epsilon \cdot |L/M| \cdot |L/S^n|^{-1}$, where $L$ denotes any lattice in $K^n$ containing both $S^n$ and $M$. We will call the usual rank 1 (fractional) ideals simply as ideals. The product of two oriented rank $n$ ideals $(M_1,\epsilon_1)$ and $(M_2,\epsilon_2)$ is defined to be $(M_1 M_2,\epsilon_1 \epsilon_2)$, where $M_1M_2$ is the component-wise multiplication in $K^n$. Two rank $n$ ideals $(M_1,\epsilon_1)$ and $(M_2,\epsilon_2)$ are said to be equivalent if there exists $\lambda \in \text{GL}_n(K)$ such that $(M_1,\epsilon_1) = (\lambda M_2, \text{sgn}(N(\text{det}(\lambda))) \epsilon_2) $.

Now, let $M$ be an oriented rank 2 ideal of $S$. There is a canonical map $\mathrm{det}: (K^2)^2 \rightarrow K$ given by
\begin{equation*}
    \mathrm{det}(\underline{X},\underline{Y})=\begin{vmatrix} X_1&X_2\\Y_1&Y_2 \end{vmatrix},
\end{equation*}
where $\underline{X} = (X_1,X_2)$ and $\underline{Y} = (Y_1,Y_2)$ are in $K^2$. Let $\mathrm{Det}(M)$ denote the ideal in $S$ generated by elements of the form $\mathrm{det}(\underline{X},\underline{Y})$ for $\underline{X},\underline{Y} \in M$. 
A pair $(I,M)$, where $I$ and $M$ are oriented ideals of $S$ having ranks 1 and 2, is called \emph{balanced} if $I \cdot \mathrm{Det}(M) \subset S$ and $N(I)N(M)=1$. Two such balanced pairs $(I,M)$ and $(I',M')$ are said to be equivalent if there exist $\lambda_1 \in \mathrm{GL}_1(K)$ and $\lambda_2 \in \mathrm{GL}_2(K)$ such that $I'=\lambda_1 I$ and $M'=\lambda_2 M$.

As proved in (\cite{bhargava:2004}), there is a canonical bijection between the set of nondegenerate $\mathrm{SL}_2(\Z) \times \mathrm{SL}_4(\Z)$-orbits on the space $\Z^2 \otimes \wedge^2 \Z^4$ of pairs of quaternary alternating $2$-forms and the set of isomorphism classes of pairs of $(S,(I,M))$, where $S$ is a nondegenerate oriented quadratic ring and $(I,M)$ is an equivalence class of balanced pairs of ideals of $S$ having ranks 1 and 2 respectively. Under this bijection, the discriminant of a pair of quaternary alternating 2-forms is equal to the discriminant of the corresponding quadratic ring.

Given a pair $(S,(I,M))$, we now describe how to construct a corresponding pair of quaternary alternating 2-forms. Let $\langle 1,\tau \rangle$ be a positively oriented $\Z$-basis of $S$, and let $\langle \alpha_1,\alpha_2 \rangle$ and $\langle \beta_1,\beta_2, \beta_3, \beta_4 \rangle$ be appropriately oriented $\Z$-bases of $I$ and $M$ respectively. Since $(I,M)$ is balanced, we can write
\begin{equation}
\label{eq:alt2formssys}
    \alpha_i \mathrm{det}(\beta_j,\beta_k) = c^{(i)}_{jk}+a^{(i)}_{jk}\tau,
\end{equation}
for some $c^{(i)}_{jk}, a^{(i)}_{jk} \in \Z$ satisfying 
$$ c^{(i)}_{jk}=-c^{(i)}_{kj}, \ a^{(i)}_{jk}=-a^{(i)}_{kj},$$ for $i=1,2$ and $j,k=1,2,3,4$. The pair of forms in $\Z^2 \otimes \wedge^2 \Z^4$ corresponding to $(I,M)$ is given by 
\begin{equation*} \label{eq:pairquataltforms}
(F_1, F_2) = \left( \sum_{j,k=1}^{4} a_{jk}^{(1)} e_j \wedge e_k, \sum_{j,k=1}^{4} a_{jk}^{(2)} e_j \wedge e_k \right),
\end{equation*}
where $\langle e_1,e_2,e_3,e_4 \rangle$ is the standard basis of $\Z^4$. The pair $(F_1',F_2') \in \Z^2 \otimes \wedge^2 \Z^4$ given by 
\begin{equation*} \label{eq:companion_pairquataltforms} 
(F_1', F_2') = \left( \sum_{j,k=1}^{4} c_{jk}^{(1)} e_j \wedge e_k, \sum_{j,k=1}^{4} c_{jk}^{(2)} e_j \wedge e_k \right)
\end{equation*}
is called the \emph{companion pair} of $(F_1,F_2)$.

Let $\x=(x_1,x_2)^t \in \Z^2$ and  $\y=(y_1,y_2,y_3,y_4)^t,\z=(z_1,z_2,z_3,z_4)^t \in \Z^4$. Using equation (\ref{eq:alt2formssys}), we can write 
\begin{equation}\label{eq:altforms_ideals_relation}
     (x_1\alpha_1+x_2\alpha_2)\cdot \text{det}(\Y,\ZZ) = [x_1F_1(\y,\z) +x_2F_2(\y,\z)] \tau + [x_1F_1'(\y,\z) +x_2F_2'(\y,\z)],
\end{equation}
where $\Y = \sum_{j=1}^4 y_j \beta_j, \ZZ = \sum_{j=1}^4 z_j \beta_j$. As a shorthand, we will write
\begin{equation*}
    F(\x,\y,\z) = x_1F_1(\y,\z) +x_2F_2(\y,\z).
\end{equation*}

A pair of forms $(F_1,F_2) \in \Z^2 \otimes \wedge^2 \Z^4$ is \emph{projective} if and only if the ideals in the corresponding pair $(I,M)$ are projective $S$-modules. 
Using results of Bass (\cite{bass}) and Serre (\cite{Serre1957-1958}), Bhargava shows that any projective balanced pair of ideals $(I,M)$ is equivalent to a pair of the form $(I,S \oplus I^{-1})$. Thus, the set of equivalence classes of projective balanced pair of ideals $(I,M)$ of a quadratic ring $S$ is endowed with an abelian group structure given by
$$ [(I_1,S \oplus I_1^{-1})] * [(I_2, S \oplus I_2^{-1})] = [(I_1 I_2, S \oplus (I_1 I_2)^{-1})].$$
Hence, the projective orbits of $\Z^2 \otimes \wedge^2 \Z^4$ of a fixed discriminant $D$ under $\text{SL}_2(\Z) \times \text{SL}_4(\Z)$ action have a natural group structure via the correspondence with balanced pairs $(I,M)$. This group is denoted as $\text{Cl}(\Z^2 \otimes \wedge^2 \Z^4; D)$, and we have the following isomorphism:
$$ \text{Cl}(\Z^2 \otimes \wedge^2 \Z^4;D) \cong \text{Cl}^+(S(D)).$$
This follows from the fact that any projective balanced pair $(I,M)$ is equivalent to one of the form $(I,S \oplus I^{-1}).$

Similar to Proposition \ref{prop:companioncube}, we have the following proposition which provides the relation between a pair of forms $(F_1,F_2)$ and its companion pair $(F_1',F_2').$
First we note that the $S$-module structure of $I$ is given by equation (\ref{eq:SmoduleI1}) for $p_1,q_1,r_1$ defined by 
\begin{equation*}
    \text{Pfaffian}(F_1x-F_2y) = p_1x^2 + q_1xy+r_1 y^2.
\end{equation*}
In the proposition below we set $\x'=(x_1',x_2')^t$ as in equation (\ref{eq:xprime}).

\begin{proposition} \label{prop:companion_alt2forms}
Let $F = (F_1,F_2) \in \Z^2 \otimes \wedge^2 \Z^4$ be a pair of quaternary alternating $2$-forms and let $(F_1',F_2')$ be its companion pair of forms. We have 
\begin{equation*} \label{eq:altforms_companion_relation}
    F'(\x,\y,\z) = F(\x',\y,\z) - \epsilon F(\x,\y,\z).
\end{equation*}
\end{proposition}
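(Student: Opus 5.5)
The plan is to copy the proof of Proposition \ref{prop:companioncube}, with the trilinear identity \eqref{eq:trilinear1} replaced by \eqref{eq:altforms_ideals_relation}. Fix compatible $\Z$-bases $\langle \alpha_1,\alpha_2\rangle$ of $I$ and $\langle \beta_1,\dots,\beta_4\rangle$ of $M$ so that \eqref{eq:alt2formssys} holds. For $\x\in\Z^2$ and $\y,\z\in\Z^4$, set $\Y=\sum y_j\beta_j$ and $\ZZ=\sum z_j\beta_j$. Then \eqref{eq:altforms_ideals_relation} gives
\[
(x_1\alpha_1+x_2\alpha_2)\det(\Y,\ZZ)=F(\x,\y,\z)\tau+F'(\x,\y,\z),
\]
where $F'(\x,\y,\z)=x_1F_1'(\y,\z)+x_2F_2'(\y,\z)$.

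Multiply both sides by $\tau$. By the stated $S$-module structure of $I$ (equation \eqref{eq:SmoduleI1}, with $p_1,q_1,r_1$ the coefficients of $\mathrm{Pfaffian}(F_1x-F_2y)$), we have $\tau(x_1\alpha_1+x_2\alpha_2)=x_1'\alpha_1+x_2'\alpha_2$, with $\x'$ as in \eqref{eq:xprime}. Since both sides of the display are linear in $\x$, applying the display at $\x'$ gives
\[
F(\x,\y,\z)\tau^2+F'(\x,\y,\z)\tau=F(\x',\y,\z)\tau+F'(\x',\y,\z).
\]
Now substitute $\tau^2=\epsilon\tau+\frac{D-\epsilon}{4}$ and compare the coefficients of $\tau$. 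Because $\langle 1,\tau\rangle$ is a $\Z$-basis of $S$, and hence a $\Q$-basis of $K$, this gives $\epsilon F(\x,\y,\z)+F'(\x,\y,\z)=F(\x',\y,\z)$. That is exactly the claim $F'(\x,\y,\z)=F(\x',\y,\z)-\epsilon F(\x,\y,\z)$. Comparing the constant terms gives, as a by-product, the relation $F'(\x',\y,\z)=\frac{D-\epsilon}{4}F(\x,\y,\z)$, but we do not need it.

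The only real point to check is that the $S$-module structure of $I$ is indeed given by \eqref{eq:SmoduleI1} with the Pfaffian form in place of $Q_1^A$, with the same sign conventions, which the paper asserts just before the statement. If one wants to verify it rather than assume it, reduce to the image of a cube. By Section 3.6 of \cite{bhargava:2004}, every pair is $\mathrm{SL}_2(\Z)\times\mathrm{SL}_4(\Z)$-equivalent to some $\phi(A)$. For $\phi(A)$ one computes $\mathrm{Pfaffian}(F_1x-F_2y)=-\det(M_1x-N_1y)=Q_1^A(x,y)$, up to the normalization fixed by the sign convention. The ideal $I$ is then the $I_1$ of the cube, so Proposition \ref{prop:bhargava20} applies.

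Both sides of the desired identity transform compatibly under the $\mathrm{SL}_4(\Z)$ factor, since it acts only on $\y,\z$. Under the $\mathrm{SL}_2(\Z)$ factor the matrix in \eqref{eq:xprime} is conjugated in the same way as the basis of $I$. This justifies the reduction to $\phi(A)$. The sign and normalization bookkeeping in this step is the expected obstacle; the rest is formal.
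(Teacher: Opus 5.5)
Your argument is the paper's proof. Multiply \eqref{eq:altforms_ideals_relation} by $\tau$, replace $\tau(x_1\alpha_1+x_2\alpha_2)$ by $x_1'\alpha_1+x_2'\alpha_2$ using the asserted $S$-module structure of $I$, evaluate \eqref{eq:altforms_ideals_relation} at $\x'$, and compare $\tau$-coefficients after substituting $\tau^2=\epsilon\tau+\frac{D-\epsilon}{4}$. One correction to your optional side check, which the paper skips: under the paper's stated normalization $\mathrm{Pfaffian}\begin{pmatrix}0&I\\-I&0\end{pmatrix}=1$, one gets $\mathrm{Pfaffian}(F_1x-F_2y)=+\det(M_1x-N_1y)=-Q_1^A(x,y)$ for $F=\phi(A)$, so your formula holds only for the standard Pfaffian $m_{12}m_{34}-m_{13}m_{24}+m_{14}m_{23}$, and the module-structure premise you and the paper both rely on needs that convention (equivalently, read $p_1,q_1,r_1$ off $-\mathrm{Pfaffian}$ in the paper's normalization).
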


\begin{proof}
Multiplying both sides of equation (\ref{eq:altforms_ideals_relation}) by $\tau$, we get
\begin{equation*}
    \tau (x_1\alpha_1+x_2\alpha_2)\cdot \text{det}(\Y,\ZZ) = F(\x,\y,\z)\tau^2 + F'(\x,\y,\z)\tau.
\end{equation*}
Thus
\begin{equation*}
    (x_1'\alpha_1+x_2'\alpha_2) \cdot \text{det}(\Y,\ZZ) = F(\x,\y,\z)\tau^2 + F'(\x,\y,\z)\tau.
\end{equation*}
Replacing $\x$ by $\x'$ in (\ref{eq:altforms_ideals_relation}), we also have 
\begin{equation*}
    (x_1'\alpha_1+x_2'\alpha_2)\cdot \text{det}(\Y,\ZZ) = F(\x',\y,\z) \tau + F'(\x',\y,\z).   
\end{equation*}
Combining the last two equations gives us
\begin{equation*}
    F(\x,\y,\z)\tau^2 + F'(\x,\y,\z)\tau = F(\x',\y,\z) \tau + F'(\x',\y,\z). 
\end{equation*}
Finally, using equation (\ref{eq:taurelation}) and equating the coefficients of $\tau$ on both sides of the above equation completes the proof of the Proposition. 
\end{proof}

\subsection{Composition identities for pairs of alternating quaternary forms}

Let $F = (F_1,F_2)$ and $G = (G_1,G_2)$ be two pairs of quaternary alternating $2$-forms in $\Z^2 \otimes \wedge^2 \Z^4$ of common discriminant $D$ and $\epsilon\in\{0,1\}$ congruent to $D$ mod 4. We define their form product $F \star G$ to be
\begin{align*}
    (F \star G)((\x,\y,\z);(\uu,\vv,\ww)) =  F(\x,&\y,\z)  G'(\uu,\vv,\ww) \\ & + F'(\x,\y,\z) G(\uu,\vv,\ww) + \epsilon F(\x,\y,\z) G(\uu,\vv,\ww),
\end{align*}
for $\x,\uu \in \Z^2$ and $\y,\z,\vv,\ww \in \Z^4$. 

Before we state our main result on composition of pairs of quaternary alternating $2$-forms, we need the following proposition. We recall that the map $\phi$ was defined by equation \eqref{eq:cubetoaltpair}.

\begin{proposition} \label{prop:doublysymcube}
    Let $F=(F_1,F_2),G=(G_1,G_2),H=(H_1,H_2)$ be three projective quaternary alternating $2$-forms of a common discriminant $D$ satisfying
    $$[F]+[G]+[H] = [\mathrm{id}]$$
in $\mathrm{Cl}(\Z^2 \otimes \wedge^2 \Z^4;D)$. Then there exist cubes $A,B,C$ such that one of them is a doubly symmetric cube, and $\phi(A),\phi(B)$ and $\phi(C)$ are $\mathrm{SL}_2(\Z) \times \mathrm{SL}_4(\Z)$-equivalent to $F,G$ and $H$ respectively.
\end{proposition}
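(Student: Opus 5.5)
The plan is to work entirely on the ideal side of the two correspondences and then pull back to forms. By the discussion preceding the statement, $F,G,H$ correspond to projective balanced pairs equivalent to $(I,S\oplus I^{-1})$, $(J,S\oplus J^{-1})$ and $(K,S\oplus K^{-1})$. The hypothesis $[F]+[G]+[H]=[\mathrm{id}]$ means that $IJK$ is equivalent to $S$. After rescaling we may assume $IJK=S$, with the orientations arranged so that $N(I)N(J)N(K)=1$.

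The first step is to record how $\phi$ interacts with the two bijections. Suppose a cube $A$ corresponds to a balanced triple $(I_1,I_2,I_3)$ with compatible bases $\langle\alpha_1,\alpha_2\rangle,\langle\beta_1,\beta_2\rangle,\langle\gamma_1,\gamma_2\rangle$. Take the rank $2$ ideal $M=I_2\oplus I_3\subset K^2$ with basis $(\beta_1,0),(\beta_2,0),(0,\gamma_1),(0,\gamma_2)$. Then $\det((\beta_j,0),(0,\gamma_k))=\beta_j\gamma_k$ and the other determinants vanish. Comparing \eqref{eq:system_cubetriple} with \eqref{eq:alt2formssys} and \eqref{eq:cubetoaltpair} shows that $\phi(A)$ is the pair of forms attached to the balanced pair $(I_1,I_2\oplus I_3)$; this is the content of Bhargava's Section 3.6, and the sign and orientation conventions must be checked against \eqref{eq:1}. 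By Steinitz's theorem for projective modules over $S$ (the Bass--Serre input already quoted), $I_2\oplus I_3\cong S\oplus I_2I_3$ up to $\mathrm{GL}_2(K)$. So $\phi(A)$ is $\mathrm{SL}_2(\Z)\times\mathrm{SL}_4(\Z)$-equivalent to the pair attached to $(I_1,S\oplus I_1^{-1})$ whenever $I_1I_2I_3$ is principal. Equivalently, the class of $\phi(A)$ depends only on the class of $I_1$.

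Next I choose the cubes. Let $A,B,C$ be the cubes attached to the balanced triples
\begin{equation*}
(I,\,I,\,I^{-2}),\qquad (J,\,J,\,J^{-2}),\qquad (K,\,K,\,K^{-2}).
\end{equation*}
These are balanced, since $I\cdot I\cdot I^{-2}=S$, and after orienting $I^{-2}$ suitably the norm condition holds. For $A$ I use the same ordered basis $\langle\alpha_1,\alpha_2\rangle$ for the first two ideals. Then $a_{ijk}$, read off from $\alpha_i\alpha_j\gamma_k$, is symmetric in $i,j$, so $A$ is symmetric in its first two directions. Permuting the roles of the three factors, which commutes with the correspondence up to relabelling, turns this into a doubly symmetric cube in the sense of Section \ref{sec:pbqfcomp}, and $\phi$ is still applied with the first ideal equal to $I$. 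By the first step, $\phi(A)$, $\phi(B)$, $\phi(C)$ are equivalent to $F$, $G$, $H$. In fact all three cubes are doubly symmetric, which is more than the statement requires.

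As a bonus, $(IJK,\,IJK,\,(IJK)^{-2})=(S,S,S)$, so $[A]+[B]+[C]=[\mathrm{id}]$ in $\mathrm{Cl}(\Z^2\otimes\Z^2\otimes\Z^2;D)$. This is presumably what will be used afterwards to invoke Theorem \ref{thm:cubecomp}.

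The main obstacle is the first step. One must verify carefully that $\phi$ on cubes matches $(I_1,I_2,I_3)\mapsto(I_1,I_2\oplus I_3)$ on ideals, including orientations and norms. One must also check that the $\mathrm{GL}_2(K)$-equivalence $I_2\oplus I_3\simeq S\oplus I_1^{-1}$ together with the scalar rescaling of $I_1$ can be realized with determinant of the right sign, so that it gives an $\mathrm{SL}_2(\Z)\times\mathrm{SL}_4(\Z)$-equivalence rather than just a $\mathrm{GL}$ one. If the sign becomes an issue, I would fix it by replacing the basis of $I^{-2}$ by its negative or by applying the involution $\tilde{\ }$, using Proposition \ref{prop:bqfs_Aiota,Asigma,Atilde}.
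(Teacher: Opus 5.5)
\textbf{The gap is in the permutation step.} You claim that permuting the factors turns the cube of $(I,I,I^{-2})$ into ``a doubly symmetric cube in the sense of Section~\ref{sec:pbqfcomp}'' while $\phi$ is ``still applied with the first ideal equal to $I$''. Both cannot hold at once.

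In the paper, doubly symmetric means $a_{ijk}=a_{ikj}$, as in the cube $A_F=[a,b,b,c,d,e,e,f]$. That is symmetry in the second and third directions, exactly the two that $\phi$ merges into the rank $2$ ideal $I_2\oplus I_3$. This is also the symmetry used in the proof of Theorem~\ref{thm:quaternary}.

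Your cube is symmetric in the first two directions instead. Moving the third factor to the front gives the cube of $(I^{-2},I,I)$. Its $\phi$-image has rank-one ideal $I^{-2}$, so it lies in the class $[I]^{-2}$, not $[I]$.

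This is an obstruction, not a bookkeeping slip. If $A$ is symmetric in directions $2$ and $3$, it corresponds to a triple $(I_1,I_2,I_2)$, so $I_1\sim I_2^{-2}$. By your own first step, the class of $\phi(A)$ is then a square in $\mathrm{Cl}^+(S(D))$. So your claim that all three cubes can be taken doubly symmetric is false. For example, take $D=-20$ with $F$ in the non-principal class. If ``doubly symmetric'' only meant ``symmetric in some pair of directions'', your argument would essentially work, but that weaker property is not what the paper needs.

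\textbf{The missing idea is to find a square among the three classes.} Suppose $I_1\sim L_1^2$. Serre cancellation then lets you replace $(I_1,I_2\oplus I_3)$ by $(L_1^2,L_1^{-1}\oplus L_1^{-1})$, whose cube is symmetric in directions $2$ and $3$. The cubes $B$ and $C$ come from arbitrary decompositions $(J_1,J_2\oplus J_3)$ and $(K_1,K_2\oplus K_3)$. This is the paper's proof.

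Be aware that the paper gets the square by asserting that $\mathrm{Cl}^2$ has index two in $\mathrm{Cl}$. That holds only when the $2$-rank of the class group is one. If the $2$-rank is at least two, the statement can fail. For $D=-84$, $\mathrm{Cl}\cong(\Z/2)^2$, and the three non-principal classes multiply to the identity while none is a square. By the obstruction above, no choice of cubes works for $F,G,H$ in those classes. So an extra hypothesis of this kind is genuinely needed, together with the orientation bookkeeping in $\mathrm{Cl}^+$.
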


\begin{proof}

Let $S=S(D)$ be the oriented quadratic ring of discriminant $D$. A result of Bass (\cite{bass}) implies that a balanced pair $(I,M)$, where $I$ and $M$ are ideals of $S$ having ranks $1$ and $2$ respectively, is equivalent to a balanced pair of the form $(I,J \oplus K)$ where $I,J,K$ are fractional $S$-ideals. Hence, we may assume without loss of generality that there exist balanced pairs $(I_1,I_2 \oplus I_3),(J_1,J_2 \oplus J_3)$ and $(K_1,K_2 \oplus K_3)$ corresponding to $F,G,$ and $H$ respectively. Also, $I_1J_1K_1=S$ because $[F] + [G] + [H] = [\text{id}]$.

We also recall that, by the cancellation theorem of Serre, a projective module of rank $2$ is determined by its determinant. Hence, any two balanced pair $(I,M_1)$ and $(I,M_2)$ are equivalent because $\text{Det}(M_1)=\text{Det}(M_2) = I^{-1}$.

Let $Cl = \text{Cl}(S(D))$ denote the (usual) class group of the quadratic ring $S(D)$. Let $Cl^2$ denote the subgroup of squares in $Cl$. Then, we have the following two possibilities for the unoriented ideal classes of $I_1,J_1$ and $K_1$.

\textbf{Case 1:} The ideal classes of one of $I_1$ and $J_1$ is in $Cl^2$.

\textbf{Case 2:} The ideal classes of neither $I_1$ and $J_1$ is in $Cl^2$. In this case, the ideal class of $K_1$ is in $Cl^2$, because $Cl^2$ is an index two subgroup of $Cl$.

Hence, one of the ideals $I_1,J_1$ or $K_1$ is equivalent to a square of an ideal. Let us say, without loss of generality, that $I_1$ is equivalent to $L_1^2$ for some ideal $L_1$. Then, the balanced pairs $(I_1,I_2 \oplus I_3)$ and $(L_1^2,L_1^{-1} \oplus L_1^{-1})$ are equivalent by Serre's theorem. Hence, $F,G,H$ are equivalent to $\phi(A),\phi(B),\phi(C)$, where $A,B$ and $C$ are the cubes corresponding to the triples $(L_1^2,L_1^{-1}, L_1^{-1}), (J_1,J_2,J_3)$ and $(K_1,K_2,K_3)$ respectively. 


\end{proof}

Suppose $[F]+[G]+[H]=[\id]$ in $\text{Cl}(\Z^2 \otimes \wedge^2 \Z^4; D)$.  We say the representative forms $F,G,H$ are well-chosen if
\begin{itemize}
  \item the three forms are in the image of $\phi$: $F=\phi(A), G=\phi(B), H=\phi(C)$ for some cubes $A,B,C\in \Z^2\otimes\Z^2\otimes\Z^2$,
  \item $[A]+[B]+[C]=[\id]$ in $\text{Cl}(\Z^2 \otimes \Z^2\otimes\Z^2; D)$ and
  \item $A$ is doubly-symmetric.
\end{itemize}
It is easy to see from Proposition \ref{prop:doublysymcube} that given the relation $[F]+[G]+[H]=[\id]$, we can always replace $F,G,H$ by well-chosen  $\text{SL}_2(\Z) \times \text{SL}_4(\Z)$ equivalent representatives, after possibly permuting $F,G,H$.

\begin{theorem}\label{thm:quaternary}
    Let $F, G$ and  $H$ be three projective pairs of quaternary alternating $2$-forms of common discriminant $D$ and $\epsilon\in\{0,1\}$ congruent to $D$ mod 4.
 Let $\x,\underline{y_i}, \underline{z_i},\uu ,\underline{v_i}, \underline{w_i} \in \Z^2$ and $\y=(\yo,\yt),\z=(\zo,\zt),\vv=(\vo,\vt),\ww=(\wo,\wt) \in \Z^4.$
    Then
    $$[F]+[G]+[H]=[\id],$$
with $F,G,H$ well-chosen representatives (in the images of cubes $A,B,C$ respectively)
    if and only if there exist cubes $R=(r_{ijk}), S=(s_{ijk}), T=(t_{ijk})$ of discriminant $D$ satisfying
    \begin{align} \label{eq:alt2formscomp}
    (G \star H)&(\x,\y,\z;\uu,\vv,\ww)\\ \nn
    = F&(R^\sigma(\x,\uu), (S^\sigma(\yo,\vo) + T^\sigma(\yt,\vt), S^\sigma(\yo,\wo) + T^\sigma(\yt,\wt)), \\ \nn  & (S^\sigma(\zo,\vo) + T^\sigma(\zt,\vt), S^\sigma(\zo,\wo) + T^\sigma(\zt,\wt))),
    \end{align}
    with
    \begin{align}\label{eq:Q1R}
    Q_1^{R} = Q_1^A, \ Q_2^{R} = Q_1^B,
    \end{align}
    and 
    \begin{align}\label{eq:QABC}
      Q_1^B(1,0)\cdot Q_1^C(1,0)&=Q_1^A(r_{211}, r_{111}),\\
  \nn    Q_2^B(1,0)\cdot Q_2^C(1,0)&=Q_2^A(s_{211}, s_{111}),\\
   \nn   Q_3^B(1,0)\cdot Q_3^C(1,0)&=Q_3 ^A(t_{211}, t_{111}).
    \end{align}
\end{theorem}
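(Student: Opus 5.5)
The plan is to reduce Theorem \ref{thm:quaternary} to Theorem \ref{thm:cubecomp} through the map $\phi$. The key bridge is a companion-compatibility statement: for a cube $A$ with $F=\phi(A)$, we should have $F'=\phi(A')$. To see this, take the cube's triple $(I_1,I_2,I_3)$ with compatible bases $\langle\alpha_i\rangle,\langle\beta_j\rangle,\langle\gamma_k\rangle$. Form the rank $2$ ideal $M=I_2\oplus I_3\subset K^2$ with basis $(\beta_1,0),(\beta_2,0),(0,\gamma_1),(0,\gamma_2)$. The only nonzero determinants are then $\det((\beta_j,0),(0,\gamma_k))=\beta_j\gamma_k$. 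Hence \eqref{eq:alt2formssys} reads $\alpha_i\det(\cdot,\cdot)=a'_{ijk}+a_{ijk}\tau$, which is exactly the block form \eqref{eq:cubetoaltpair}. So, in the bilinear shorthand and using Proposition \ref{prop:companioncube},
$$F(\x,(\yo,\yt),(\zo,\zt))=A(\x,\yo,\zt)-A(\x,\zo,\yt)$$
and likewise for $F'$ with $A'$. The pair $(I_1,I_2\oplus I_3)$ is balanced because $N(M)=N(I_2)N(I_3)$. Alternatively, the identity $F'=\phi(A')$ follows from Proposition \ref{prop:companion_alt2forms}, since $\mathrm{Pfaffian}(F_1x-F_2y)=Q_1^A$ and $\phi$ is linear in the first slot.

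\textbf{Forward direction.} Assume $F,G,H$ are well-chosen, so $F=\phi(A)$, $G=\phi(B)$, $H=\phi(C)$ and $[A]+[B]+[C]=[\id]$. Theorem \ref{thm:cubecomp} then supplies $R,S,T$ satisfying \eqref{eq:cubecomp_identity}, \eqref{eq:cubecompQ} and \eqref{eq:cubecompQ2}; these are \eqref{eq:Q1R} and \eqref{eq:QABC}. Expand $(G\star H)(\x,\y,\z;\uu,\vv,\ww)$ using $G=\phi(B)$, $G'=\phi(B')$, $H=\phi(C)$, $H'=\phi(C')$. This gives a signed sum of four products of the shape $(B\star C)(\x,\cdot,\cdot;\uu,\cdot,\cdot)$ with arguments drawn from $\yo,\zt,\zo,\yt$ and $\vo,\wt,\wo,\vt$.

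Applying \eqref{eq:cubecomp_identity} to each term turns it into $A$ evaluated at $R^\sigma(\x,\uu)$, an $S^\sigma$-value and a $T^\sigma$-value. We then collect terms using the multilinearity of $A$. The target is to recognise the result as $A(\x^*,Y_1,Z_2)-A(\x^*,Z_1,Y_2)$, which is the right side of \eqref{eq:alt2formscomp} by the formula for $\phi(A)$. Here $\x^*=R^\sigma(\x,\uu)$, and $Y_1,Y_2,Z_1,Z_2$ are the four combinations of $S^\sigma$ and $T^\sigma$ appearing there.

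This is the step I expect to be the main obstacle. The naive expansion of $F$ at those combined arguments produces cross terms, for example $A(\x^*,S^\sigma(\yo,\vo),T^\sigma(\zt,\wt))$ against terms involving $S^\sigma(\yo,\wo)$. These must match exactly the four-term expansion on the left, with the correct signs. The slot assignment must also be chosen so that $S$ carries the second cube slot and $T$ the third. Double symmetry of $A$ ($Q_2^A=Q_3^A$, $A(\x,\y,\z)=A(\x,\z,\y)$) is what allows the $S$- and $T$-slots to be interchanged so that the cross terms combine. If the bookkeeping fails as stated, the fallback is to redo the ideal computation directly. We would multiply $(x_1\omega_1+x_2\omega_2)\det(\Y,\ZZ)$ by the corresponding element built from $K_1$ and $K_2\oplus K_3$, and expand $\det$ of componentwise products in $K^2$ via Proposition \ref{lemma:cubebasesmultiplication}. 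This yields \eqref{eq:alt2formscomp} directly from the ideal identity $I_mJ_mK_m=S$, exactly as \eqref{eq:compid1} was obtained.

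\textbf{Reverse direction.} Given $R,S,T$ and the well-chosen cubes, restrict \eqref{eq:alt2formscomp} to the sub-variables $\yt=\zo=\vt=\wo=0$. This kills the cross terms and recovers exactly \eqref{eq:cubecomp_identity} for $A,B,C$ with $R,S,T$. Then \eqref{eq:Q1R} and \eqref{eq:QABC} are \eqref{eq:cubecompQ} and \eqref{eq:cubecompQ2}. The reverse direction of Theorem \ref{thm:cubecomp} gives $[A]+[B]+[C]=[\id]$. Applying $\phi$, which sends a triple $(I_1,I_2,I_3)$ to $(I_1,I_2\oplus I_3)$ and is compatible with the group laws via the first ideal, gives $[F]+[G]+[H]=[\id]$.
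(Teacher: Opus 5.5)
Your route is the paper's, essentially step for step. In the forward direction you use Theorem \ref{thm:cubecomp}, $\phi(B)'=\phi(B')$, $\phi(C)'=\phi(C')$ and the four-term expansion of $G\star H$. In the reverse direction you use the substitution $\yt=\zo=\vt=\wo=0$ and the reverse half of Theorem \ref{thm:cubecomp}, which is fine as you wrote it.

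The forward collection step, however, does not close with double symmetry alone; the paper also passes over it as ``a little algebra''. Put $X=R^\sigma(\x,\uu)$. The four mixed $S$--$T$ terms of $A(X,Y_1,Z_2)-A(X,Z_1,Y_2)$ do match the four terms on the left, two of them after using $A(\x,\y,\z)=A(\x,\z,\y)$. But the expansion also leaves
\[
A\bigl(X,S^\sigma(\yo,\vo),S^\sigma(\zo,\wo)\bigr)-A\bigl(X,S^\sigma(\zo,\vo),S^\sigma(\yo,\wo)\bigr)
\]
and the analogous $T$--$T$ difference. Neither double symmetry nor the cube identity of Theorem \ref{thm:cubecomp} controls these, since that identity only ever has $T^\sigma$ in the third slot. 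For an arbitrary cube $S$ they need not vanish.

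They do vanish for the $S,T$ built in the proof of Theorem \ref{thm:cubecomp}, for two reasons:
\begin{enumerate}
\item Double symmetry of $A$ forces $\gamma_k=\lambda\beta_k$ for its compatible bases.
\item $S^\iota(\y,\vv)$ gives the coordinates of $(y_1\sigma_1+y_2\sigma_2)(v_1\chi_1+v_2\chi_2)$ in the basis $\langle\beta_1',\beta_2'\rangle$ of $I_2^{-1}$.
\end{enumerate}
So the first term above is the $\tau$-coefficient of a fixed multiple of a product of six linear factors, and that product is invariant under $\yo\leftrightarrow\zo$. Likewise the $T$--$T$ difference vanishes by invariance under $\yt\leftrightarrow\zt$. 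Hence the ideal-theoretic input of your fallback is genuinely required, not optional.

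If you carry out the fallback directly, two corrections are needed. First, the identity to use is that the product of the two $2\times 2$ determinants equals the determinant of the matrix product, as in the proof of Theorem \ref{thm:senarycomp}, not a determinant of componentwise products. Second, rescale the triple of $A$ beforehand so that $\gamma_k=\beta_k$. That rescaling is exactly what makes the sums $S^\iota(\cdot)+T^\iota(\cdot)$ coordinates of a single element of $I_2^{-1}=I_3^{-1}$.
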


\begin{proof}
  We start by assuming that $[F]+[G]+[H]=[\id]$ with $F=\phi(A), G=\phi(B), H=\phi(C)$ with $A$ doubly-symmetric and $[A]+[B]+[C]=[\id].$  By Theorem \ref{thm:cubecomp}, this last condition implies that there exist cubes $R,S,T$ such that
  \begin{equation}
    \label{eq:alternating-pairs-proof}
  (B\star C)(\x,\underline{y_i},\underline{z_j};\uu,\underline{v_k},\underline{w_l})
    = A(R^\sigma(\x,\uu), S^\sigma(\underline{y_i},\underline{v_k}), T^\sigma(\underline{z_j},\underline{w_l}))
  \end{equation}
  and the normalizing conditions  \eqref{eq:Q1R}, \eqref{eq:QABC} are satisfied. 
  
Proposition \ref{prop:companion_alt2forms} implies $\phi(B)' = \phi(B')$ and $\phi(C)'=\phi(C')$.  Hence
\begin{align*}
  (G \star H)&(\x,\y,\z;\uu,\vv,\ww)\\
&= G(\x,\y,\z)H^\prime(\uu,\vv,\ww)+ G^\prime(\x,\y,\z)H(\uu,\vv,\ww)+ \epsilon G(\x,\y,\z)H(\uu,\vv,\ww) \\
&= \phi(B)(\x,\y,\z)\phi(C^\prime)(\uu,\vv,\ww)+ \phi(B^\prime)(\x,\y,\z)\phi(C)(\uu,\vv,\ww) \\
& \ \ \ \ \ \ \ \ + \epsilon \phi(B)(\x,\y,\z)\phi(C)(\uu,\vv,\ww) \\
\end{align*}
Using the definition \eqref{eq:1} of $\phi$ together with \eqref{eq:alternating-pairs-proof}, a little algebra now yields  \eqref{eq:alt2formscomp}.

Conversely, with $F,G,H \in \Z^2 \otimes \wedge^2 \Z^4$ and $A,B,C \in \Z^2 \otimes \Z^2 \otimes \Z^2$ be as in the statement of the theorem,  assume that 
\begin{align} 
    (G \star H)&(\x,\y,\z;\uu,\vv,\ww)\\ \nn
    = F&(R^\sigma(\x,\uu), (S^\sigma(\yo,\vo) + T^\sigma(\yt,\vt), S^\sigma(\yo,\wo) + T^\sigma(\yt,\wt)), \\ \nn  & (S^\sigma(\zo,\vo) + T^\sigma(\zt,\vt), S^\sigma(\zo,\wo) + T^\sigma(\zt,\wt))),
\end{align}
is satisfied for some cubes $R,S,T$ of discriminant $D$. We also assume \eqref{eq:Q1R}, \eqref{eq:QABC}. We will prove that $[A]+[B]+[C]=[\id]$. The relation $[F]+[G]+[H]=[\id]$ will then follow.

Observe that substituting $\yt=\zo=(0,0)$ and $\vt=\wo=(0,0)$ in \eqref{eq:alt2formscomp} gives 
\begin{align*}
    (B \star C)(\x,\yo,\zt;\uu,\vo,\wt) =  A(R^\sigma(\x,\uu),S^\sigma(\yo,\vo),T^\sigma(\zt,\wt)).
\end{align*}

Under the assumptions stated above, by Theorem \ref{thm:cubecomp}, we immediately conclude that $[A]+[B]+[C]=[\id]$. This completes the proof of the theorem.

\end{proof}

\subsection{Example}

We consider the pairs $F = (F_1,F_2), G=(G_1,G_2)$ and $H = (H_1,H_2)$ in $\Z^2 \otimes \wedge^2 \Z^4$ of discriminant $-47$ which are equal to $\phi(A),\phi(B)$ and $\phi(C)$ respectively for the cubes
\begin{equation*} 
A = \cube{0}{2}{2}{-1}{1}{0}{0}{-3}, 
B = \cube{0}{-1}{-2}{-1}{-1}{0}{0}{6},
C = \cube{-1}{2}{2}{-2}{1}{5}{-1}{-11}.
\end{equation*}
We have 
$$[A] + [B] + [C] = [\id]$$
in $\text{Cl}(\Z^2 \otimes \Z^2 \otimes \Z^2;-47)$, and the cube $A$ is doubly-symmetric. Also, the associated quadratic forms are given by $Q_1^A=[4,-1,3], Q_2^A=[2,1,6],Q_3^A=[2,1,6], Q_1^B = [2,1,6], Q_2^B=[2,-1,6], Q_3^B=[1,-1,12], Q_1^C = [2,1,6], Q_2^C = [1,1,12]$ and $Q_3^C = [7,25,24].$

The companion pairs $F',G'$ and $H'$ are given by $\phi(A'),\phi(B')$ and $\phi(C')$ respectively, where $A',B',C'$ are the companion cubes of $A,B,C$ respectively and are equal to
\begin{equation*} 
A' = \cube{4}{-2}{-2}{-11}{0}{-6}{-6}{3}, 
B' = \cube{-2}{0}{0}{12}{1}{6}{12}{0},
C' = \cube{2}{10}{-2}{-22}{5}{-17}{-11}{23}.
\end{equation*}

The cubes $(R,S,T)$ which are dual to $(A,B,C)$ are given by 
\begin{equation*} 
R = \cube{0}{2}{2}{-1}{1}{0}{0}{-3}, 
S = \cube{1}{-2}{-1}{0}{-1}{1}{-6}{12},
T = \cube{0}{2}{1}{0}{1}{-1}{0}{-6},
\end{equation*}
and the pair of forms $F,G,H$ satisfy the composition identity 
\begin{align*} 
    (G \star H)&(\x,\y,\z;\uu,\vv,\ww)\\ \nn
    = F&(R^\sigma(\x,\uu), (S^\sigma(\yo,\vo) + T^\sigma(\yt,\vt), S^\sigma(\yo,\wo) + T^\sigma(\yt,\wt)), \\ \nn  & (S^\sigma(\zo,\vo) + T^\sigma(\zt,\vt), S^\sigma(\zo,\wo) + T^\sigma(\zt,\wt))),
\end{align*}

\section{Composition of senary alternating 3-forms} \label{sec:senaltcomp}

The final composition law we consider is the composition of senary alternating $3$-forms. The space $\wedge^3 \Z^6$ of senary alternating $3$-forms is obtained by imposing triple skew-symmetry on the space $\Z^2 \otimes \Z^2 \otimes \Z^2$. As in the previous section, we have a natural map
\begin{equation} \label{eq:cubetosenaryaltmap}
\wedge_{2,2,2} : \Z^2 \otimes \Z^2 \otimes \Z^2 \rightarrow \wedge^3(\Z^2 \oplus \Z^2 \oplus \Z^2)=\wedge^3 \Z^6,
\end{equation}
(See Section 2.7 of \cite{bhargava:2004} for the definition of this map.)

The group $\text{SL}_6(\Z)$ acts on the space $\wedge^3 \Z^6$  and there is a unique polynomial invariant under this action called the discriminant. A form $E \in \wedge^3 \Z^6$ is called \emph{nondegenerate} if its discriminant is nonzero and it is called \emph{projective} if it is equivalent to a form which is in the image of the mapping $\wedge_{2,2,2}$. 

\subsection{Correspondence between senary alternating 3-forms and rank 3 ideals}

Let $S$ be an oriented quadratic ring, $K=S \otimes \Q$, and let $M \subset K^3$ be a rank 3 ideal of $S$. There is a canonical map $\mathrm{det}: (K^3)^3 \rightarrow K$, given by
\begin{equation*}
    \mathrm{det}(\underline{X},\underline{Y},\underline{Z})=\begin{vmatrix}X_1&X_2&X_3\\Y_1&Y_2&Y_3\\Z_1&Z_2&Z_3\end{vmatrix}.
\end{equation*}
Let $\mathrm{Det}(M)$ be the ideal generated by elements of the form $\mathrm{det}(\underline{X},\underline{Y},\underline{Z})$, where $\underline{X},\underline{Y},\underline{Z} \in M$. If $M$ is a decomposable ideal i.e. $M = I_1 \oplus I_2 \oplus I_3 \subset K^3$, for some rank $1$ ideals $I_1,I_2,I_3$ of $S$, then $\mathrm{Det}(M)=I_1I_2I_3$. We say $M$ is balanced if $\mathrm{Det}(M) \subset S$ and $N(M)=1$. Two rank $3$ ideals $M_1$ and $M_2$ are said to be equivalent if there exists $\lambda \in GL_3(K)$ such that $\lambda \cdot M_1 = M_2$.

Bhargava (\cite{bhargava:2004}) proves there is a canonical bijection between the set of nondegenerate $\mathrm{SL}_6(\Z)$-orbits on the space $\wedge^3 \Z^6$ of senary alternating $3$-forms, and the set of isomorphism classes of pairs of $(S,M)$, where $S$ is a nondegenerate oriented quadratic ring and $M$ is an equivalence class of balanced rank 3 ideals of $S$. Under this bijection, the discriminant of a senary alternating 3-form is equal to the discriminant of the corresponding quadratic ring.

To construct the senary alternating 3-form corresponding to a given pair $(S,M)$, let $\langle 1,\tau \rangle$ be a positively oriented $\Z$-basis of $S$ and let $\mathcal{A} = \langle \alpha_1,\alpha_2,\alpha_3,\alpha_4,\alpha_5,\alpha_6 \rangle$ be a positively oriented $\Z$-basis of $M$. Since $M$ is balanced, we have 
\begin{equation}
\label{eq:senarysys}
    \mathrm{det}(\alpha_i,\alpha_j,\alpha_k) = a'_{ijk}+a_{ijk}\tau,
\end{equation}
for some integers $a_{ijk}$ and $a'_{ijk}$ satisfying 
\begin{equation*}
    a_{ijk} = -a_{ikj} = -a_{jik} = -a_{kij},
\end{equation*}
and
\begin{equation*}
    a'_{ijk} = -a'_{ikj} = -a'_{jik} = -a'_{kij},    
\end{equation*}
for $i,j,k \in \lbrace 1,2,3,4,5,6 \rbrace$. Then, 
$$E = \sum_{i,j,k = 1}^{6} a_{ijk} e_i \wedge e_j \wedge e_k$$ is the senary alternating 3-form corresponding to $M$, where $\langle e_1,e_2,...,e_6 \rangle$ is the standard basis of $\Z^6$. The form 
\begin{equation*} \label{eq:companion_senaryaltform}
E' = \sum_{i,j,k = 1}^{6} a'_{ijk} e_i \wedge e_j \wedge e_k
\end{equation*}
is called the \emph{companion form} of $E$. Let $\underline{X}, \underline{Y}$ and $\underline{Z}$ be arbitrary vectors in $M$ and let $\underline{x} = (x_1,x_2,...,x_6)^t, \underline{y} = (y_1,y_2,...,y_6)^t$ and $\underline{z} = (z_1,z_2,...,z_6)^t$ be their respective $\Z^6$ coordinates with respect to the $\Z$-basis $\mathcal{A}$ of $M$. Using \eqref{eq:senarysys}, we can write 
\begin{equation}
    \mathrm{det}(\underline{X},\underline{Y},\underline{Z}) = E(\underline{x},\underline{y},\underline{z})\tau + E'(\underline{x},\underline{y},\underline{z}).
\end{equation}

Again using results of Bass (\cite{bass}) and Serre (\cite{Serre1957-1958}) we can deduce that any rank 3 ideal $M$ of $S$ is (isomorphic to) a decomposable ideal i.e. the direct sum of rank 1 ideals, and that any projective balanced rank 3 ideal of $S$ is equivalent to $S \oplus S \oplus S$. This implies that the class group of senary alternating $3$-forms of any given discriminant $D$ is the trivial group. The identity element of $\text{Cl}(\wedge^3 \Z^6;D)$ is the class of $E_{\id,D}$, where 
$$E_{\id,D} = \wedge_{2,2,2}(A_{\id,D}).$$

\subsection{Composition identities for senary alternating 3-forms}

Let $E_1$ and $E_2$ be two senary alternating $3$-forms in $\wedge^3 \Z^6$ of common discriminant $D$. We define their form product $E_1 \star E_2$ to be
\begin{align*}
    (E_1 \star E_2)((\x,\y,\z);(\uu,\vv,\ww)) =  E_1&(\x,\y,\z)  E_2'(\uu,\vv,\ww) \\ & + E_1'(\x,\y,\z) E_2(\uu,\vv,\ww) + \epsilon E_1(\x,\y,\z) E_2(\uu,\vv,\ww),
\end{align*}
for $\x,\y,\z,\uu,\vv,\ww \in \Z^6$. We have the following result on composition of senary alternating $3$-forms.

\begin{theorem} \label{thm:senarycomp}

Let $E=E_{\id,D}$ be the projective senary alternating $3$-form of discriminant $D$ that represents the identity element in $\mathrm{Cl}(\wedge^3 \Z^6;D)$ and $\epsilon \in \{0,1\}$ congruent to $D$ mod $4$. For $i=1,2,3$, let 
$\underline{x_i},\underline{y_i},\underline{z_i},\underline{u_i},\underline{v_i},\underline{w_i} \in \Z^2$ and let $\x=(\xo,\xt,\xth),\y=(\yo,\yt,\yth),\z=(\zo,\zt,\zth),\uu=(\uo,\ut,\uth),\vv=(\vo,\vt,\vth),\ww=(\wo,\wt,\wth).$
Then, we have the composition identity
\begin{align*}
    (E \star E)(\x,\y,\z;\uu,\vv,\ww) & = \\ \nn & E \Bigg [  \left( \sum_{i=1}^3 R^\iota(\underline{x_i},\underline{u_i}),\sum_{i=1}^3 R^\iota(\underline{y_i},\underline{u_i}), \sum_{i=1}^3 R^\iota(\underline{z_i},\underline{u_i}) \right)^t, \\ & \left( \sum_{i=1}^3 R^\iota(\underline{x_i},\underline{v_i}),\sum_{i=1}^3 R^\iota(\underline{y_i},\underline{v_i}), \sum_{i=1}^3 R^\iota(\underline{z_i},\underline{v_i}) \right)^t, \\ & \left( \sum_{i=1}^3 R^\iota(\underline{x_i},\underline{w_i}),\sum_{i=1}^3 R^\iota(\underline{y_i},\underline{w_i}), \sum_{i=1}^3 R^\iota(\underline{z_i},\underline{w_i}) \right)^t \Bigg ].
\end{align*}
where $R$ is the cube of discriminant $D$ given by
$$R=\cube{1}{0}{0}{\frac{D-\epsilon}{4}}{0}{1}{1}{\epsilon}.$$

\end{theorem}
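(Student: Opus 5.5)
Work with the ideal-theoretic model of $E=E_{\id,D}$. Since $E=\wedge_{2,2,2}(A_{\id,D})$ and $A_{\id,D}$ corresponds to the balanced triple $(S,S,S)$ with the standard basis $\langle 1,\tau\rangle$ in each factor, $E$ corresponds to the rank $3$ ideal $M=S\oplus S\oplus S\subset K^3$ with $\Z$-basis
\[
(1,0,0),\ (\tau,0,0),\ (0,1,0),\ (0,\tau,0),\ (0,0,1),\ (0,0,\tau).
\]
For $\x=(\xo,\xt,\xth)\in\Z^6$ write $\underline{X}=(X_1,X_2,X_3)\in S^3$ with $X_i=x_{i,1}+x_{i,2}\tau$, and similarly $\underline{Y},\underline{Z},\underline{U},\underline{V},\underline{W}$. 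Then by the identity $\mathrm{det}(\underline{X},\underline{Y},\underline{Z})=E(\x,\y,\z)\tau+E'(\x,\y,\z)$, expanding the product of two determinants and using $\tau^2=\epsilon\tau+\frac{D-\epsilon}{4}$, the $\tau$-coefficient of
\[
\mathrm{det}(\underline{X},\underline{Y},\underline{Z})\,\mathrm{det}(\underline{U},\underline{V},\underline{W})
\]
is exactly $(E\star E)(\x,\y,\z;\uu,\vv,\ww)$. This is the same computation that underlies the forward direction of Theorem \ref{thm:cubecomp}.

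Next, rewrite the product of determinants as a single determinant, using multiplicativity of $\det$ on $3\times 3$ matrices over the commutative ring $S$:
\[
\det(P)\det(Q^t)=\det(PQ^t),
\]
where $P$ has rows $\underline{X},\underline{Y},\underline{Z}$ and $Q$ has rows $\underline{U},\underline{V},\underline{W}$. The $(1,1)$ entry of $PQ^t$ is $\sum_i X_iU_i$, and the other entries are similar. The resulting matrix is the transpose of the matrix whose rows are
\[
\Big(\textstyle\sum_i X_iU_i,\ \sum_i Y_iU_i,\ \sum_i Z_iU_i\Big),
\]
and the analogous rows with $V$ and $W$. Since transposition does not change the determinant, this matches the shape of the right-hand side of Theorem \ref{thm:senarycomp}, with rows indexed by $\uu,\vv,\ww$ and columns by $\x,\y,\z$.

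It then remains to express each product $X_iU_i\in S$ in coordinates with respect to $\langle 1,\tau\rangle$ as $R^\iota(\underline{x_i},\underline{u_i})$. Expanding,
\[
(x_1+x_2\tau)(u_1+u_2\tau)=\Big(x_1u_1+\tfrac{D-\epsilon}{4}x_2u_2\Big)+\big(x_1u_2+x_2u_1+\epsilon x_2u_2\big)\tau.
\]
The $1$-coefficient is the bilinear form from the front face $[1,0;0,\frac{D-\epsilon}{4}]$ of $R$, and the $\tau$-coefficient comes from the back face $[0,1;1,\epsilon]$. By the convention \eqref{def:cube_to_bilinear}, $R(\x,\uu)$ lists the front-face value first and then the back-face value. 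Hence $R^\iota$, which swaps the faces, has first coordinate the $\tau$-coefficient and second coordinate the constant term. So I must check which coordinate order $E$ uses for an element of $S$, namely whether the basis of $M$ is ordered as $(1,\tau)$ or $(\tau,1)$ in each slot. This bookkeeping is the main obstacle. It depends on Bhargava's precise definition of $\wedge_{2,2,2}$ and on the orientation conventions, which is exactly why $R^\iota$ rather than $R$ appears. I would fix it by evaluating both sides on a single test vector, for instance with all arguments equal to standard basis vectors, and if necessary reorder the basis of $M$ so that $E_{\id,D}$ is literally the form obtained from $\det$ in coordinates $(\tau\text{-coefficient},1\text{-coefficient})$.

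Once the conventions match, the $3\times3$ determinant of these sums has $\tau$-coefficient
\[
E\Big[\big(\textstyle\sum_i R^\iota(\underline{x_i},\underline{u_i}),\dots\big)^t,\dots\Big].
\]
Comparing $\tau$-coefficients with the first step then gives the claimed identity. Finally, a direct check of the quadratic forms of $R$ confirms that $\mathrm{disc}(R)=D$.
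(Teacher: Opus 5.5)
Your route is the paper's route. You realize $E=E_{\id,D}$ as the $\tau$-coefficient of $\det$ on $S\oplus S\oplus S$ in $\langle 1,\tau\rangle$-coordinates. You identify the $\tau$-coefficient of $\det(\underline X,\underline Y,\underline Z)\det(\underline U,\underline V,\underline W)$ with $E\star E$, merge the two determinants into one, and read off the coordinates of the entries $\sum_i X_iU_i$. The gap is the step you deferred, and the repair you propose cannot work. There is no ordering left to choose. As your own first paragraph says, $E$ is the form in $(1,\tau)$-coordinates in every slot. Reordering the basis of $M$ to $(\tau,1)$ gives a different form: it is $\wedge_{2,2,2}$ of the reversed cube $[\tfrac{D+3\epsilon}{4},\epsilon,\epsilon,1,\epsilon,1,1,0]$, and the reordered basis is negatively oriented, so it is not $E_{\id,D}$. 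Even for that form, $R^\iota$ would not compute the products, because the $\epsilon$ and $\tfrac{D-\epsilon}{4}$ terms would sit on the wrong variables. Your own expansion, together with \eqref{def:cube_to_bilinear}, shows that the $\langle 1,\tau\rangle$-coordinates of $X_iU_i$ are $R(\underline{x_i},\underline{u_i})$, not $R^\iota(\underline{x_i},\underline{u_i})$.

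In fact the identity as printed is false. Take $\xo=\uo=\yt=\vt=\zth=\wth=(1,0)^t$ and all other blocks $0$. Then $\det(\underline X,\underline Y,\underline Z)=\det(\underline U,\underline V,\underline W)=1$, so $E(\x,\y,\z)=E(\uu,\vv,\ww)=0$ and the left side vanishes. But $R^\iota((1,0)^t,(1,0)^t)=(0,1)^t$, which is the coordinate vector of $\tau$. So the right side is the $\tau$-coefficient of $\tau^3$, namely $\tfrac{D+3\epsilon}{4}$, which is nonzero for every nondegenerate $D\neq -3$.

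The paper's proof makes the same slip. It asserts $(x_1+x_2\tau)(u_1+u_2\tau)=a_1+a_2\tau$ with $(a_1,a_2)^t=R^\iota((x_1,x_2)^t,(u_1,u_2)^t)$. Already $x_1=u_1=1$, $x_2=u_2=0$ refutes this: the product is $1$, while $R^\iota$ returns $(0,1)^t$.

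What your argument (and the paper's) actually proves is the identity with every $R^\iota$ replaced by $R$. Equivalently, you can keep $R^\iota$ and replace the displayed cube by its face swap $[0,1,1,\epsilon,1,0,0,\tfrac{D-\epsilon}{4}]$. State and prove that corrected version instead of trying to reconcile the printed one by a change of basis.
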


\begin{proof}

The senary alternating $3$-form $E=E_{\id,D}$ corresponds to the pair $(S,S \oplus S \oplus S)$. Indeed, let $\underline{X} = (X_1,X_2,X_3), \underline{Y} = (Y_1,Y_2,Y_3)$ and $\underline{Z}=(Z_1,Z_2,Z_3)$ be arbitrary vectors in $S \oplus S \oplus S$.
We fix the $\Z$-basis $\langle 1, \tau \rangle$ of $S$. For $i=1,2,3$, let $\underline{x_i},\underline{y_i},\underline{z_i} \in \Z^2$ be the respective $\Z^2$ coordinates  of $X_i,Y_i,Z_i$ with respect to the fixed $\Z$-basis of $S$. Then, using \eqref{eq:senarysys}, we can write 
\begin{equation*}
    \mathrm{det}(\underline{X},\underline{Y},\underline{Z}) = E(\x,\y,\z)\tau + E'(\x,\y,\z),
\end{equation*}
where $\x=(\xo,\xt,\xth),\y=(\yo,\yt,\yth),\z=(\zo,\zt,\zth)$.

Similarly, let $\underline{U} = (U_1,U_2,U_3),$ $\underline{V} = (V_1,V_2,V_3)$ and $\underline{W} = (W_1,W_2,W_3)$ be arbitrary vectors in $S \oplus S \oplus S$ and $\underline{u_i},\underline{v_i},\underline{w_i} \in \Z^2$ be the respective $\Z^2$ coordinates  of $U_i,V_i,W_i$. Let $\uu=(\uo,\ut,\uth),\vv=(\vo,\vt,\vth),\ww=(\wo,\wt,\wth).$ Now, let $\underline{A}, \underline{B}, \underline{C} \in S \oplus S \oplus S$ be defined by the relation
\begin{equation} \label{eq:3x3matrix}
    \begin{bmatrix}
        X_1 & X_2 & X_3 \\
        Y_1 & Y_2 & Y_3 \\
        Z_1 & Z_2 & Z_3
    \end{bmatrix} 
    \begin{bmatrix}
        U_1 & V_1 & W_1 \\
        U_2 & V_2 & W_2 \\
        U_3 & V_3 & W_3
    \end{bmatrix}
    = \begin{bmatrix}
        A_1 & B_1 & C_1 \\
        A_2 & B_2 & C_2 \\
        A_3 & B_3 & C_3
    \end{bmatrix}.
\end{equation}
Let $\underline{a_i},\underline{b_i},\underline{c_i} \in \Z^2$ be the respective $\Z^2$ coordinates  of $A_i,B_i,C_i$ and let $\aaa=(\underline{a_1},\underline{a_2},\underline{a_3}), \bb = (\underline{b_1},\underline{b_2},\underline{b_3}),\cc=(\underline{c_1},\underline{c_2},\underline{c_3})$. The cube $R$ corresponds to the balanced triple $(S,S,S)$ where the $\Z$-basis is chosen to be $\langle 1,\tau \rangle$ for all the three copies of $S$. We note that for $(x_1,x_2)^t,(u_1,u_2)^t \in \Z^2$,
\begin{align*}
    (x_1+x_2 \tau)(u_1+u_2 \tau) = a_1+a_2 \tau,
\end{align*}
where $(a_1,a_2)^t = R^\iota((x_1,x_2)^t,(u_1u_2)^t)$.
Using this equation and taking determinants in \eqref{eq:3x3matrix} gives us
\begin{align} \label{eq:senarycompid_in_S}
    [E(\x,\y,\z) \tau + E'(\x,\y,\z)] \cdot [E(\uu,\vv,\ww) \tau + E'(\uu,\vv,\ww)] = E(\aaa,\bb,\cc) \tau + E'(\aaa,\bb,\cc),
\end{align}
where $\aaa,\bb,\cc$ are given by 
\begin{equation*} \label{eq:case1bc}
    \begin{split}
        \aaa & = (\underline{a_1},\underline{a_2},\underline{a_3})  =\left( \sum_{i=1}^3 R^\iota(\underline{x_i},\underline{u_i}),\sum_{i=1}^3 R^\iota(\underline{y_i},\underline{u_i}), \sum_{i=1}^3 R^\iota(\underline{z_i},\underline{u_i}) \right)^t, \\ 
        \bb & = (\underline{b_1},\underline{b_2},\underline{b_3})  = \left( \sum_{i=1}^3 R^\iota(\underline{x_i},\underline{v_i}),\sum_{i=1}^3 R^\iota(\underline{y_i},\underline{v_i}), \sum_{i=1}^3 R^\iota(\underline{z_i},\underline{v_i}) \right)^t, \\
        \cc & = (\underline{c_1},\underline{c_2},\underline{c_3}) = \left( \sum_{i=1}^3 R^\iota(\underline{x_i},\underline{w_i}),\sum_{i=1}^3 R^\iota(\underline{y_i},\underline{w_i}), \sum_{i=1}^3 R^\iota(\underline{z_i},\underline{w_i}) \right)^t.
    \end{split}
\end{equation*}

Finally, equating the coefficients of $\tau$ on both sides of equation \eqref{eq:senarycompid_in_S}, we get
\begin{align*}
    (E \star E)(\x,\y,\z;\uu,\vv,\ww) & = \\ \nn & E \Bigg [  \left( \sum_{i=1}^3 R^\iota(\underline{x_i},\underline{u_i}),\sum_{i=1}^3 R^\iota(\underline{y_i},\underline{u_i}), \sum_{i=1}^3 R^\iota(\underline{z_i},\underline{u_i}) \right)^t, \\ & \left( \sum_{i=1}^3 R^\iota(\underline{x_i},\underline{v_i}),\sum_{i=1}^3 R^\iota(\underline{y_i},\underline{v_i}), \sum_{i=1}^3 R^\iota(\underline{z_i},\underline{v_i}) \right)^t, \\ & \left( \sum_{i=1}^3 R^\iota(\underline{x_i},\underline{w_i}),\sum_{i=1}^3 R^\iota(\underline{y_i},\underline{w_i}), \sum_{i=1}^3 R^\iota(\underline{z_i},\underline{w_i}) \right)^t \Bigg ].
\end{align*}

\end{proof}

\section{Conclusion} \label{sec:conc_ack}

We identify some possible future directions and  generalizations of our work. 
\begin{enumerate}
    \item We use the correspondence between forms and ideals in each higher composition law to compute the composition of given forms and to write the explicit composition identities. It would be interesting to find a more direct approach to the composition identities without having to rely on the correspondence with ideals. 
    \item Our results are potentially applicable to questions of representations of integers by homogeneous forms, e.g. binary cubic forms.  Similar identities in different settings are applied to such problems in recent works of Choudhry \cite{choudhry:2022} and Duke \cite{duke2022some}.
    \item Bhargava investigates further higher composition laws in \cite{hcl2}, \cite{hcl3} and \cite{hcl4}.  We expect our methods to extend these settings. More interesting are extensions beyond the case of prehomogeneous vector spaces. A catalog of potential candidates for study appears in the work of Bhargava and Ho \cite{bhargavaho} on coregular spaces.  The papers  \cite{choudhry:2022} and \cite{duke2022some} cited above (dealing with $3\times 3\times 3$ cubes and symmetric $2\times 2\times 2\times 2$ hypercubes, respectively) are examples.
\end{enumerate}


\bibliographystyle{amsalpha}
\bibliography{references}

\end{document}